\begin{document}

\baselineskip=18pt
\setcounter{page}{1}

\renewcommand{\theequation}{\thesection.\arabic{equation}}
\newtheorem{theorem}{Theorem}[section]
\newtheorem{lemma}[theorem]{Lemma}
\newtheorem{proposition}[theorem]{Proposition}
\newtheorem{corollary}[theorem]{Corollary}
\newtheorem{remark}[theorem]{Remark}
\newtheorem{fact}[theorem]{Fact}
\newtheorem{problem}[theorem]{Problem}

%%%%%%%%%%%%%%%%%%%%%%%%%%% Equation numberings
\newcommand{\eqnsection}{
\renewcommand{\theequation}{\thesection.\arabic{equation}}
    \makeatletter
    \csname  @addtoreset\endcsname{equation}{section}
    \makeatother}
\eqnsection
%%%%%%%%%%%%%%%%%%%%%%%%%%%

%%%%%%%%%%%%%%%%%%%%%%%%%%%%%%%%%%%%%%
% Traduire en franais les titres
%%%%%%%%%%%%%%%%%%%%%%%%%%%%%%%%%%%%%%
%\def\refname{R\'ef\'erences}
%\def\bibname{Bibliographie}
%\def\chaptername{Chapitre}
%\def\contentsname{Table des mati\`eres}

%%%%%%%%%%%%%% Bbb characters
%%%%%%%%%%%%%% Real numbers
\def\r{{\mathbb R}}
%%%%%%%%%%%%%% Expectation
\def\e{{\mathbb E}}
%%%%%%%%%%%%%% Probability
\def\p{{\mathbb P}}
%%%%%%%%%%%%%% Rationals
\def\q{{\mathbb Q}}
%%%%%%%%%%%%%% Law of environment
\def\P{{\bf P}}
\def\E{{\bf E}}
\def\Q{{\bf Q}}
%%%%%%%%%%%%%% Integers
\def\z{{\mathbb Z}}
%%%%%%%%%%%%%% Wiener measure
\def\W{{\mathbb W}}
%%%%%%%%%%%%%% Galton-Watson tree
\def\T{{\mathbb T}}
%%%%%%%%%%%%%%

\def\N{{\mathbb N}}

\def\sD{\mathscr{D}}
\def\sL{\mathscr{L}}
\def\sQ{\mathscr{Q}}
\def\sP{\mathscr{P}}
\def\sN{\mathscr{N}}
\def\sH{\mathscr{H}}
\def\sF{\mathscr{F}}
\def\sR{\mathscr{R}}

\newcommand{\be}{\begin{equation}}
\newcommand{\ee}{\end{equation}}

%%%%%%%%%%%%%%%% Special symbols
%%%%%%%%%%%%%% Exponential
\def\ee{\mathrm{e}}
%%%%%%%%%%%%%% Differentiation
\def\d{\, \mathrm{d}}
%%%%%%%%%%%%%% Colours
%\definecolor{ROUGE}{rgb}{1,0,0}
%%%%%%%%%%%%%%

\newcommand{\Npop}[2]{\sN^{(#1)}_{#2}}
\newcommand{\tauu}[2]{\tau^{(#1)}_{#2}}

%%%%%%%%%%%%%%%%%%%%%%%%%%%%%%%%%%%%%%%%%%%%
%%%%%%%%%%%%%%%%%%%%%%%%%%%%%%%%%%%%%%%%%%%%
%%%%%%%%%%%%%%%%%%%%%%%%%%%%%%%%%%%%%%%%%%%%

%%%%%%%%%%%%%% Beginning of the te

\title{Branching Brownian motion seen from its tip}

\author{
E. A\"id\'ekon\thanks{Department of Mathematics and Computer Science, Technische Universiteit Eindhoven, The Netherlands. email: {\tt elie.aidekon@gmail.com} } \
J. Berestycki\thanks{Laboratoire de Probabilit\'es et Mod\`eles Al\'eatoires, CNRS UMR 7599, UPMC Universit\'e Paris 6, Case courrier 188, 4, Place Jussieu 75252 Paris Cedex 05.  email: {\tt julien.berestycki@upmc.fr}} \
\'{E}. Brunet\thanks{ Laboratoire de Physique Statistique,	\'Ecole Normale Sup\'erieure, UPMC Universit\'e Paris 6, Universit\'e Paris Diderot, CNRS, 24 rue Lhomond, 75005 Paris, France. email: {\tt eric.brunet@lps.ens.fr}} \
Z. Shi\thanks{Laboratoire de Probabilit\'es et Mod\`eles Al\'eatoires, CNRS UMR 7599, UPMC Universit\'e Paris 6, Case courrier 188, 4, Place Jussieu 75252 Paris Cedex 05.  email: {\tt zhan.shi@upmc.fr}} 
}

\maketitle
\vspace{0.2in}

\begin{abstract}

It has been conjectured since the work of Lalley and Sellke \cite{lalley-sellke} that the branching Brownian motion seen from its tip (e.g. from its rightmost particle) converges to an invariant point process. Very recently, it emerged that this can be proved in several different ways (see e.g.\@ Brunet and Derrida \cite{brunet derrida}, Arguin et al. \cite{ABK2,ABK3}).  The structure of this extremal point process turns out to be a  Poisson point process with exponential intensity in which each atom has been decorated by an independent copy of an auxiliary point process. The main goal of the present work is to give a complete description of the limit object via an explicit construction of this decoration point process. 
Another proof and description has been obtained independently by Arguin et al. \cite{ABK3}.  
\end{abstract}

\section{Introduction}
\label{s:problematique}

Branching Brownian motion is the subject of a large literature that one can trace back at least to \cite{INW}. The connection of this probabilistic model with the well-known F-KPP equation has in particular attracted much interest from both the probabilistic and the analytic side starting with the seminal studies of McKean \cite{mckean}, Bramson \cite{bramson83}, Lalley and Sellke \cite{lalley-sellke}, Chauvin and Rouault \cite{chauvinrouault}  and more recently with works by  Harris \cite{harris},  Kyprianou \cite{harris} and Harris, Harris and Kyprianou  \cite{hhk}.

In the present work we  consider a continuous-time branching Brownian motion with quadratic branching mechanism:  the system starts with a single particle at the origin which follows a Brownian motion with drift $\varrho$ and variance  $\sigma^2>0$. After an exponential time with parameter $\lambda>0$ the particle splits into two new particles which each start a new independent copy of the same process started from it place of birth. Each of them thus moves according to a Brownian motion with drift $\varrho$ and variance $\sigma^2>0$ and splits into two after an exponential time with parameter $\lambda>0$ and so on.

We write $X_1(t) \le \ldots \le X_{N(t)}$ for the positions of the particles of the branching Brownian motion alive at time $t$ enumerated from left to right (where $N(t)$ is the number of particles alive at time $t$). The corresponding random point measure is denoted by 
$$\mathscr{N}(t) := \sum_{i=1,\ldots, N(t)} \delta_{X_i(t)}. $$
We will work under conditions on $\lambda, \varrho, \sigma^2$ which ensure that for all $t>0$,
\begin{equation}
    \E\Big[ \sum_{i =1, \ldots , N(t)} \ee^{-X_i(t)}\Big] =1, \qquad
    \E\Big[ \sum_{i =1, \ldots , N(t)} X_i(t) \ee^{-X_i(t)}\Big] =0.
    \label{cond-hab}
\end{equation}
\noindent Since $\E(N(t))=\ee^{\lambda t}$, for any measurable function $F$ and each $t>0$,
$$
\E\Big[ \sum_{i =1, \ldots , N(t)} F(X_{i,t}(s), \, s\in [0, \, t])\Big]
=
\ee^{\lambda t}\, \E\Big[ F(\sigma B_s + \varrho s, \, s\in [0, \, t])\Big] ,
$$

\noindent where, for each $i \in \{1,\ldots, N(t)\}$ we let $X_{i,t}(s), \, s\in [0, \, t]$ be the position, at time $s$, of the unique ancestor of $X_i(t)$  and  $B$ is a standard Brownian motion.
Thus the equations (\ref{cond-hab}) become $\varrho = \lambda + {\sigma^2\over 2}$ and $\varrho = \sigma^2$. Hence the usual conditions amount to supposing $\varrho = \sigma^2= 2\lambda$. {\bf In this paper we always assume $\lambda=1$, $\varrho=2$ and $\sigma=\sqrt2$.} The choice of a binary branching is arbitrary. Our results certainly hold true for a more general class of branching mechanisms, e.g. when the law of the number of offsprings is bounded or has finite second moment. For the sake of clarity we only consider the simple case of binary branching which already contains the full phenomenology.   

\medskip

The position $X_{N(t)}(t)$ of the rightmost  particle of the branching Brownian motion  has been much studied (see \cite{mckean, bramson78, bramson83, lalley-sellke}). In these classical works, the authors usually assume that $\varrho=0, \lambda= \sigma=1 .$ We recall some of their results adapted to our normalization. In particular, instead of the rightmost particle
we prefer to work with
the position $X_1(t)$ of the leftmost particle.

Bramson \cite{bramson83} shows that there exists a constant $C_B \in \r$ and a real valued random variable $W$ such that
\begin{equation}
    X_1(t) -  m_t
    \;\; {\buildrel \text{law} \over \to} \;\;
    W, \qquad t \to \infty,
    \label{bramson}
\end{equation}
\noindent where
\begin{equation}
    m_t := {3  \over 2} \log t + C_B
    \label{bramson-constant}
\end{equation}
and furthermore the distribution function $\P(W\le x) =w(x)$ is a solution to the critical F-KPP travelling wave equation
$$  w'' + 2w'+w(w-1)=0.$$
%jb verifier le signe du 3eme terme

Lalley's and Sellke's paper
\cite{lalley-sellke} can be seen as the real starting point of the present work. Realizing that the convergence (\ref{bramson}) cannot hold in an ergodic sense, they prove the following result. Define
\begin{equation}\label{E:def de Z}
Z(t) : = \sum_{i=1,\ldots, N(t)} X_i(t) \ee^{-X_i(t)}.
\end{equation}
We know that $\E(Z(t)) = 0$ by (\ref{cond-hab}) and it is not hard to see that $(Z(t), t\ge 0)$ is in fact a martingale (the so-called {\it derivative martingale}). It can be shown that
\begin{equation}
    Z :=\lim_{t \to \infty} Z(t)
    \label{E:def de Z1}
\end{equation}

\noindent exists, is finite and strictly positive with probability 1.
The main result of Lalley's and Sellke's paper is then that $\exists C>0$
such that
$$
\lim_{s \to \infty} \lim_{t \to \infty} \P (X_1(t+s) - m_{t+s} \ge x | \sF_s ) = \exp \left(  - CZ\ee^{ x}\right)
$$
where $\sF_t$ is the natural filtration of the branching Brownian motion.
As a consequence,
\begin{equation}
    \P(W  \le x)
    \;\sim \;
    C \, |x| \ee^{ x},
    \qquad x\to -\infty .
    \label{lalley-sellke}
\end{equation}

Since conditionally on $Z$
the function
$y \mapsto \exp \left(  - CZ \ee^{ y} \right) =  \exp \left(  -  \ee^{ y +\log (CZ) } \right)$ is the distribution function of minus a Gumbel random variable centered on $-\log (CZ),$ this suggests the following picture which is conjectured by Lalley and Sellke for the front of
branching Brownian motion.
The random variable $X_1(t) - m_t$ converges in distribution and its limit is the sum of two terms. The first one is $-\log (CZ),$ which depends on the limit of the derivative martingale, while the second term is simply minus a Gumbel random variable. Brunet and Derrida \cite{brunet derrida} interpret this as a random delay (which builds up early in the process and settles down to some value) and a fluctuation term around this position.

In the last section of \cite{lalley-sellke}, the authors conjecture that more generally, the point measure of particle positions relative to $ m_t-\log (CZ) $
$$
\bar \sN(t) :=  \sum_{i=1,\ldots, N(t)}\delta_{X_i(t) -  m_t +\log (CZ)}
$$
converges to a stationary distribution. %({\it a standing wave of particles} as the authors put it) with density proportional to $\ee^{ x}.$

In the present work we prove that $\bar \sN(t)$ converges to a stationary distribution which we describe precisely. We show that the structure of this limit point measure is a {\it decorated } Poisson point measure, i.e., a Poisson point measure on the real line where each atom is replaced by an independent copy of a certain point measure shifted by the position of the atom.
Another proof and description has been obtained independently by Arguin et al. \cite{ABK3} (see Section \ref{S:discut}).

\section{Main results}\label{S:main}

%{\tt Il faut preciser l'espace et la topologie ici et les notions de convergences}. done elie

Throughout the paper, all point measures are, as in the setting of Kallenberg~\cite{kallenberg}, considered as elements of the space $\mathcal{M}$ of Radon measures on $\mathbb{R}$ equipped with the vague topology, that is, we say that $\mu_n$ converges in distribution to $\mu$ if and only if $\int f \d\mu_n\to \int f\d\mu$ for any real continuous function $f$ with compact support. By Theorem 4.2~(iii) p.~32 of \cite{kallenberg}, it is equivalent to say that $(\mu_n(A_j),\,1\le j\le k)$ converges in distribution to $(\mu(A_j),\,1\le j\le k)$ for any intervals $(A_j,\,1\le j\le k)$. The space $C(\r_+, \, \r)$ (or sometimes, $C([0, \, t], \, \r)$) is endowed with topology of uniform convergence on compact sets.  If $F$ is a function on $C(\r_+, \, \r)$, then for any continuous function $(Z_s, \, s\in [0, \, t])$, we define $F(Z_s, \, s\in [0, \, t])$ as $F(\widetilde{Z}_s, \, s\ge 0)$, with $\widetilde{Z}_s := Z_{\min\{ s, \, t\}}$. 

We now introduce two point measures which are the main focus of this work. First, consider the point measure of the particles seen from $m_t - \log (CZ)$ and enumerated from the leftmost:
$$
\bar \sN (t) = \sN(t) -  m_t + \log (CZ)
=
\sum_{i=1, \ldots, N(t)} \delta_{X_i(t) - m_t + \log (CZ) }.
$$

We will also sometimes want to consider the particles as seen from the leftmost
$$
\sN'(t)
:=
\sum_{i=1, \ldots, N(t)} \delta_{X_i(t) - X_1(t) }
$$

\begin{theorem}
\label{t:main}
As $t \to \infty$ the pair $\{ \bar \sN(t) , Z(t) \} $ converges jointly in distribution to  $ \{ \sL , Z \}$ where $Z$ is as in (\ref{E:def de Z1}),  $\sL$ and $Z$ are independent and $\sL$ is  obtained as follows.
\begin{itemize}
\item[(i)] Define $\sP$ a Poisson point measure on $\r$, with intensity measure  $  \ee^{ x}\d x.$
\item[(ii)] For each atom $x$ of $\sP$, we attach a point measure $\sQ^{(x)}$ where $\sQ^{(x)}$ are
independent copies
of a certain point measure $\sQ$.
\item[(iii)]  $\mathscr{L}$ is then the point measure corresponding to the sum of all $x+\mathscr{Q}^{(x)}$, i.e., $$\mathscr{L} := \sum_{x \in \sP} \sum_{y \in \sQ^{(x)}}\delta_{x+y}$$
where $x\in \sP$ means ``$x$ is an atom of $\sP$''.
\end{itemize}
\end{theorem}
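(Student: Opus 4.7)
The strategy is an intermediate-time decomposition. Fix a large time $s$ and condition on $\sF_s$: by the branching property each particle $X_i(s)$ initiates an independent BBM of length $t-s$, whose point measure $\sN^{(i)}(t-s)$ has its own derivative-martingale limit $Z^{(i)}$. The $Z^{(i)}$ are i.i.d.\ copies of $Z$, independent of $\sF_s$, and the standard decomposition
\[Z(t) = \sum_{i=1}^{N(s)} \ee^{-X_i(s)}\bigl[X_i(s)\, W^{(i)}(t-s) + Z^{(i)}(t-s)\bigr],\]
together with the a.s.\ vanishing of each subtree's critical additive martingale $W^{(i)}$, gives the key identity $Z = \sum_{i=1}^{N(s)} \ee^{-X_i(s)} Z^{(i)}$ a.s.

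The first ingredient is a \emph{cluster convergence} inside a single subtree: as $u\to\infty$, the point measure $\sN(u) - X_1(u)$ (supported on $[0,\infty)$ with an atom at $0$) converges in distribution to an auxiliary law $\sQ$. This is a self-contained statement proved via a spine change of measure by conditioning on the trajectory of the leftmost particle. Applied to each subtree, the decorations $\sN^{(i)}(t-s) - X_1^{(i)}(t-s)$ are asymptotically i.i.d.\ copies of $\sQ$, independent of $Z^{(i)}$; this breaks the apparent circularity of defining the decoration of $\sL$ in terms of $\sL$ itself.

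The second ingredient is the Poisson structure of the cluster centres. By Lalley--Sellke applied inside each subtree, $X_1^{(i)}(t-s) - m_{t-s} + \log(CZ^{(i)}) \Rightarrow -G^{(i)}$ for i.i.d.\ standard Gumbels $G^{(i)}$; since $m_t - m_{t-s}\to 0$ when $s=o(t)$, the centres shifted by $m_t - \log(CZ)$ converge to $a_i^{(s)} - G^{(i)}$ with $a_i^{(s)} := X_i(s) - \log(CZ^{(i)}) + \log(CZ)$. A first-moment calculation shows that the expected density of $\sum_i \delta_{a_i^{(s)} - G^{(i)}}$ at a point $x$ is
\[\sum_{i=1}^{N(s)} \ee^{\,x-a_i^{(s)}}\,\ee^{-\ee^{x-a_i^{(s)}}} \;\approx\; \ee^x \cdot (CZ)^{-1}\sum_{i=1}^{N(s)} \ee^{-X_i(s)}\,(CZ^{(i)}),\]
which by the key identity tends to $\ee^x$ as $s\to\infty$, \emph{independently of $Z$}. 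A factorial-moment computation of the same type yields Poisson convergence, so the centres form a PPP with intensity $\ee^x\,\d x$ independent of $Z$.

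Combining the two ingredients yields the decorated Poisson structure $\sL$ of the statement, and joint convergence with $Z(t)\to Z$ follows because both the Poisson centres and the i.i.d.\ decorations decouple from $Z$ in the limit. The main obstacle is the justification of the interchange of the limits $t\to\infty$ and $s\to\infty$. One needs (i) tightness of $\bar\sN(t)$ on every half-line $(-\infty,A]$, via a first-moment bound using the many-to-one lemma and the $K\ee^{K}$ left tail of $X_1(t)-m_t$ associated with (\ref{lalley-sellke}); (ii) uniform control of the Lalley--Sellke convergence across subtrees, in particular those with atypically large $Z^{(i)}$; and (iii) showing that only finitely many subtrees contribute atoms to $\bar\sN(t)\cap(-\infty,A]$ in the limit, so that the truncation to a finite collection of subtrees is consistent as $s\to\infty$.
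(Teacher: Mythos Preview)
Your high–level plan (freeze an intermediate generation, treat each ancestor as a cluster centre, decorate, and let the generation go to infinity) is exactly what the paper does in Section~\ref{S:proof of thm main}.  However, the proposal contains a genuine conceptual error that breaks the argument, and two structural differences from the paper that are worth noting.

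\medskip
\textbf{The main gap: the decoration is not $\lim_u \sN'(u)$.}  You write that ``as $u\to\infty$, the point measure $\sN(u)-X_1(u)$ converges in distribution to an auxiliary law $\sQ$'' and that this ``breaks the apparent circularity''.  It does not.  The limit of $\sN(u)-X_1(u)=\sN'(u)$ is the object $\sL'$ of the Corollary, i.e.\ the \emph{full} decorated Poisson process seen from its minimum, not a single cluster $\sQ$.  If you plug $\sL'$ in as the decoration, you are writing $\sL$ as $\mathrm{PPP}(\ee^x)\oplus\sL'$, which is a self-referential identity (each copy of $\sL'$ already contains infinitely many $\sQ$-clusters) and does not identify $\sQ$.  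The paper breaks the circularity in a different way: it proves independently (Theorem~\ref{T: structure de Q}) that the point measure $\sQ(t,\zeta)$ of particles that branched off the minimal path \emph{in the last $\zeta$ units of time} converges as $t\to\infty$ then $\zeta\to\infty$ to a limit $\sQ$, and then uses the genealogical result of Proposition~\ref{P: no branching at intermediate times} (no branching at intermediate times) to show that, restricted to any window $[-\eta,\eta]$, each ancestor's contribution to $\bar\sN(t)$ coincides with its $\sQ^{(u)}_{t,\zeta}$.  Your sketch never invokes this ``no intermediate branching'' step, and without it there is no way to separate a single cluster from the rest of a subtree.

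\medskip
\textbf{Temporal vs.\ spatial freezing.}  You condition on $\sF_s$ for a fixed time $s$; the paper instead stops particles on the spatial stopping line $\sH_k=\{\text{first hit of level }k\}$.  The spatial choice is not cosmetic: it puts every ancestor at the \emph{same} position $k$, so the centres are $k+W(u)$ with $W(u)$ i.i.d., and Proposition~\ref{P:convergence jointe vers P et Z} reduces to textbook extreme-value theory for i.i.d.\ variables with tail $C|x|\ee^{x}$.  With your time-$s$ decomposition the centres sit at $X_i(s)+W^{(i)}$ with \emph{random, unequal} shifts $X_i(s)$; your first-moment computation is correct (it recovers $\ee^x$ via $\sum_i X_i(s)\ee^{-X_i(s)}\to Z$), but the ``factorial-moment computation of the same type'' is substantially harder than you suggest and would need e.g.\ a triangular-array Poisson approximation with explicit control of the $X_i(s)$.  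Neveu's identity $Z=\lim_k k\ee^{-k}H_k$ is what makes the spatial version clean.

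\medskip
\textbf{Independence from $Z$.}  In the paper this comes for free once the centres are expressed through i.i.d.\ copies of $W$ attached to $\sH_k$: classical records of i.i.d.\ variables are asymptotically independent of their count $H_k$, hence of $Z_k$, hence of $Z$.  In your version the independence of the decorations from $Z^{(i)}$ is precisely the independence of $\sL'$ from $Z$ inside each subtree, which is again the statement you are trying to prove; this is a second manifestation of the circularity above.

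\medskip
In short, the skeleton is right, but you need (i) Proposition~\ref{P: no branching at intermediate times} to isolate a \emph{single} cluster per ancestor, (ii) Theorem~\ref{T: structure de Q} to identify that cluster with a limit $\sQ$ that is defined without reference to $\sL$, and (iii) a spatial rather than temporal stopping line (or a much more careful triangular-array argument) to get the Poisson limit of the centres and its independence from $Z$.
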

%todo est ce qu'on est conditionel a Z ou est ce qu'on met Z_t dansla def de \bar N_t
%jb

Since the leftmost atom of $\mathscr{P}$ has the Gumbel distribution, this implies that the Gumbel distribution is the weak limit of $X_1(t) - m_t + \log (CZ)$. The following corollary, concerning the point measure seen from the leftmost position, contains strictly less information than the theorem.
\begin{corollary}
As $t \to \infty$ the point measure $\sN'(t)$ converges in distribution to the point measure $\sL'$  obtained by replacing the Poisson point measure $\sP$ in step (i) above by $\sP'$ described in step (i)' below:
\begin{itemize}
\item[(i)'] Let ${\bf e}$ be a standard exponential random variable. Conditionally on ${\bf e},$ define $\sP'$ to be a Poisson point measure on $\r_+$, with intensity measure ${\bf e} \ee^{x} {\bf 1}_{\r_+}(x) \d x$ to which we add an atom in $0$.
\end{itemize}
The decoration point measure $\sQ(x)$ remains the same.
\end{corollary}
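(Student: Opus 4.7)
The plan is to deduce the corollary directly from Theorem~\ref{t:main} by noting that $\sN'(t)$ is nothing but $\bar\sN(t)$ translated so that its leftmost atom sits at the origin: indeed, that leftmost atom equals $X_1(t) - m_t + \log(CZ)$, so
\[ \sN'(t) \;=\; \bar\sN(t) - \inf \bar\sN(t) . \]
Granting that this ``translate-by-the-minimum'' operation can be pushed through the vague convergence $\bar\sN(t)\to\sL$ of Theorem~\ref{t:main}, one obtains $\sN'(t) \to \sL - \inf\sL$ in distribution, and the whole content of the corollary reduces to identifying the law of $\sL - \inf\sL$ with that of $\sL'$.

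For this identification, set $M := \inf\sL$. A structural property of the decoration $\sQ$ that is built into the construction of $\sL$, and that is needed both for $\sL$ to be a Radon measure and for $\sL$ to admit a well-defined leftmost atom, is that $\sQ$ is supported on $\r_+$ with an atom at $0$; consequently $M = \inf\sP$. By the elementary slicing property of the Poisson process $\sP$ of intensity $\ee^{x}\d x$, conditionally on $M=m$ the restriction of $\sP$ to $(m,\infty)$ is a Poisson point measure on $(m,\infty)$ of intensity $\ee^{x}\d x$, independent of the family of i.i.d.\@ decorations $(\sQ^{(x)})$. Translating by $-m$ shows that $\sP - M$ consists of an atom at $0$ together with, conditionally on $M$, a Poisson point measure on $(0,\infty)$ of intensity $\ee^{M}\cdot \ee^{x}\d x$. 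Since $\P(M\le m) = \exp(-\ee^{m})$, the variable ${\bf e}:=\ee^{M}$ is standard exponential, so $\sP - M$ has exactly the law of $\sP'$ described in step (i)$'$. The i.i.d.\@ decorations are unaffected by the global translation, hence $\sL - M$ and $\sL'$ have the same distribution.

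The main obstacle is the continuous-mapping step: ``shift by the leftmost atom'' is not continuous on $\mathcal{M}$ in the vague topology, since vague convergence is insensitive to mass escaping to $-\infty$. To bypass this, I would combine Theorem~\ref{t:main} with Bramson's theorem~(\ref{bramson}) and the Lalley--Sellke asymptotics to upgrade the convergence to the joint statement $(\bar\sN(t), X_1(t) - m_t + \log CZ) \to (\sL, \inf\sL)$; a Skorokhod representation then yields joint a.s.\@ convergence along a subsequence, and testing against continuous compactly supported functions produces the vague convergence of $\sN'(t)$ to $\sL - \inf\sL$, completing the proof.
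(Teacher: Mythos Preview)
Your approach is correct and coincides with the paper's: the corollary is stated immediately after Theorem~\ref{t:main} with the remark that it ``contains strictly less information than the theorem'', and no separate proof is given---the paper treats the passage from $\bar\sN(t)\to\sL$ to $\sN'(t)\to\sL-\inf\sL$ as immediate, so your handling of the continuous-mapping issue is in fact more careful than the paper's. One small slip: you write $\P(M\le m)=\exp(-\ee^m)$, but it is $\P(M>m)=\exp(-\ee^m)$; your conclusion that $\mathbf{e}=\ee^M$ is standard exponential is nonetheless correct, since $\P(\ee^M>y)=\P(M>\log y)=\ee^{-y}$.
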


\medskip

The variable $Z$ is not $\sF_t$-measurable, and in this sense Theorem \ref{t:main} is a conditional statement. However, it is clear that if one replaces $\bar \sN(t)$ by $$ \hat \sN(t) := \sN(t) -  m_t + \log (CZ(t))
=
\sum_{i=1, \ldots, N(t)} \delta_{X_i(t) - m_t + \log (CZ(t)) }$$
which is $\sF_t$-measurable, then the same result still holds.

\medskip

Theorem \ref{t:main} above should not be considered a new result when the {\it decoration} point measure $\sQ$ is not specified.  Indeed, the convergence to a limiting point process was already implicit in the results of Brunet and Derrida \cite{brunet-derrida} and is also proved independently in \cite{ABK3} by Arguin et al. See Section \ref{S:discut} for a detailed discussion.

\medskip

We next give a precise description of  the {\it decoration} point measure $\sQ$ which is the main result of the present work. For each $s \le t$, recall that $X_{1,t}(s)$ is the position at time $s$ of the ancestor of $X_1(t)$, i.e., $s\mapsto X_{1,t}(s)$ is the path followed by the
leftmost particle
at time $t.$ We define
$$
Y_t(s) := X_{1,t}(t-s) - X_1(t) , \qquad s\in [0,t]
$$
the time reversed path back from the final position $X_1(t).$ 
%For each $t>0$ and for each path $X:= (X(s), \, s\in [0, \, t])$ that goes from the ancestor to a particle in  $\mathscr{N}(t)$, 
Let us write $t\ge \tau_1(t)>\tau_2(t)>\ldots $ for the (finite number of) successive splitting times of branching along the trajectory $X_{1,t}(s), s\le t$ (enumerated backward). We define  $\mathscr{N}_i(t)$ to be the point measure corresponding to the set of all particles at time $t$ which have branched off from $X_{1,t}$ at time $\tau_i(t)$ relative to the final position $X_1(t)$ (see figure 1).
%\ref{F:fig 2}
%%xxxx Mauvais numero pour figure \ref{F:fig 2} !
We will also need the notation $\tau_{i,j}(t)$ which is the time at which $X_i(t)$ and $X_j(t)$ share their most recent common ancestor. Observe that
$$
\mathscr{N}_i(t) = \sum_{j \le N(t) : \tau_{1,j}(t) = \tau_i(t)} \delta_{X_j(t) - X_1(t)} \ .
$$

\begin{figure}%\label{F:fig 2}
\begin{center}
\input{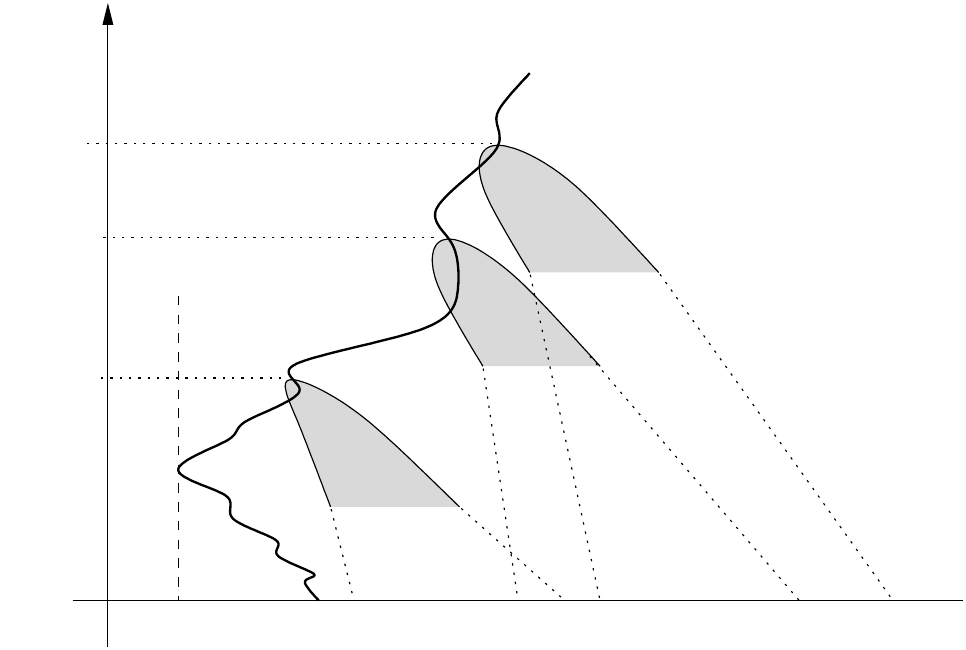_t}
\caption{ $(Y, \sQ)$ is the limit of the path $s\mapsto X_{1,t}(t-s)-X_1(t) $ and of the points that have branched recently off from $X_{1,t}$.}
\label{F:Qstruct}
\end{center}
\end{figure}
%todo %jb change notations in figure

We then define
$$
\sQ(t,\zeta) := \delta_0 +\sum_{i : \tau_i(t) >t-\zeta}\sN_i(t)
$$
i.e.,  the point measure of particles at time $t$ which have branched off $X_{1,t}(s)$ after time $t-\zeta$, including the  particle at $X_1(t)$ itself.

We will first show that $((Y_t(s), s \in [0,t]) , \sQ(t,\zeta))$ converges jointly in distribution  (by first letting $t \to \infty$ and then $\zeta \to \infty$) towards a limit $((Y(s), s\ge 0) , \sQ)$ where the second coordinate is our point measure $\sQ$ which is described by growing conditioned
branching Brownian motions
born at a certain rate on the path $Y.$ We first describe the limit $((Y(s), s\ge 0) , \sQ)$ and then we state the precise convergence result.

\medskip

The following family of processes indexed by a real parameter $b>0$ plays a key role in this description. Let $B:= (B_t, \, t\ge 0)$ be a standard Brownian motion and let $R:= (R_t, \, t\ge 0)$ be a three-dimensional Bessel process started from $R_0:=0$ and independent from $B.$ Let us define $T_b := \inf\{ t\ge 0: \, B_t =b\}$. For each $b >0$, we define the process $\Gamma^{(b)}$ as follows:
\begin{equation}
    \Gamma^{(b)}_s
    :=
    \begin{cases} B_s, &\text{ if $s\in [0, \, T_b]$}, \cr\cr
    b-R_{s-T_b}, &\text{ if $s\ge T_b$.} \cr \end{cases}
    \label{U}
\end{equation}

\begin{figure}[h]
\begin{center}
 \includegraphics{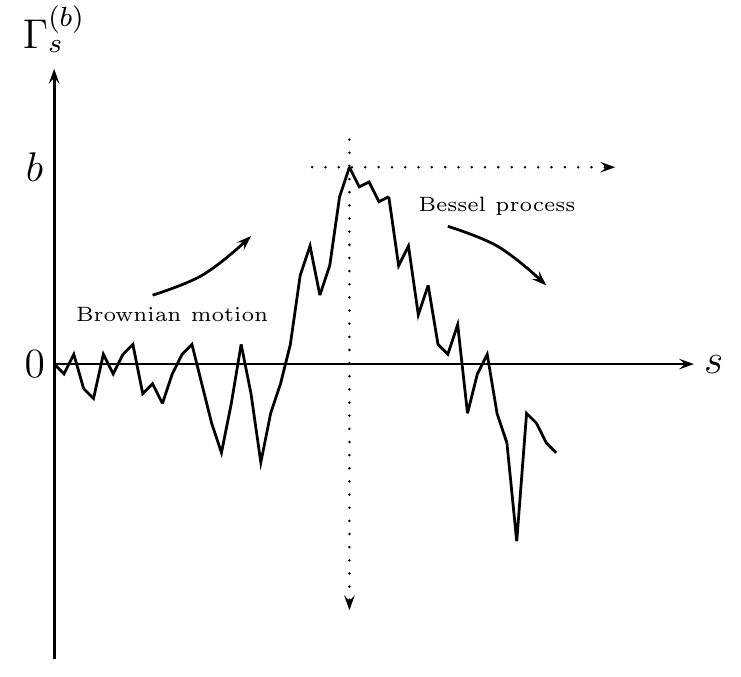}
 \end{center}
 \caption{the process $\Gamma^{(b)}$}
\end{figure}

Let us define
$$
G_t(x) :=  \P_0(X_1(t) \le x) =\P_{-x} (X_1(t) \le 0)
$$
the probability of presence to the left of $x$ at time $t$, where we write $\P_x$ for the law of the
branching Brownian motion
started from one particle at $x.$ Hence, by (\ref{bramson}) we see that 
$G_t (x+m_t) \to \P(W \le x)$.

We can now describe the law of the backward path $Y:$ for any measurable set $A$ of $C(\r_+, \, \r)$ and $b \ge 0$,
$$
\P (Y\in A , -\inf_{s \ge 0}Y(s) \in db ) = {1\over c_1} \E \left[  \ee^{-2  \int_0^\infty G_v (\sigma \Gamma^{(b)}_v) \d v}  \mathbf{1}_{ -\sigma \Gamma^{(b)} \in A} \right],
$$
where 
\begin{equation*}
     c_1
     :=
     \int_0^\infty
     \E \Big[ \ee^{-2 \int_0^\infty G_v( \sigma \Gamma_v^{(b)})\d v}  \Big]
     \d b
 \end{equation*}
(observe that by equation \eqref{E:c1} this constant is finite).

Observe that 
$-\inf_{s \ge 0}Y(s)$ is a random variable with values in
$(0, \, \infty)$
whose density is given by $$\P( -\inf_{s \ge 0}Y(s) \in \!\d b) =  {1\over c_1}\E \Big[ \ee^{-2 \int_0^\infty G_v( \sigma \Gamma_v^{(b)})\d v}  \Big] \d b.
$$

Now, conditionally on the path $Y,$ we let $\pi$ be a Poisson point process on $[0,\infty)$ with intensity $2  \big(1 - G_t(-Y(t))\big) \d t  = 2  \big(1 - \P_{Y(t)} (X_1(t) <0) )\big) \d t$. For each point $t\in \pi$ start an independent
branching Brownian motion
$(\sN^*_{Y(t)}(u), u\ge 0)$ at position $Y(t)$ conditioned to have $\min \sN^*_{Y(t)}(t) >0$.\footnote{By convention, for a point measure $\sN$, $\min \sN$ is the infimum of the support of $\sN$.}  Then define $\sQ:= \delta_0+ \sum_{t \in \pi} \sN^*_{Y(t)}(t).$

%In the following theorem, convergence of $(Y_t(s),\,s\in[0,t])$ is to be understood as uniform convergence on compact sets.
% elie: j'ai rajoutŽ uniforme convergence
% xxxx
\begin{theorem}\label{T: structure de Q}

The following convergence holds jointly in distribution:
$$
\lim_{\zeta \to \infty} \lim_{t \to \infty}
((Y_t(s), s\in [0,t]) , \,
\sQ(t,\zeta), \, X_{1}(t)-m_t)
=
((Y(s), s\ge 0) , \, \sQ, \, W) ,
$$
where the random variable $W$ is independent of the pair $((Y(s),s\ge 0), \, \sQ)$, and $\sQ$ is the point measure which appears in Theorem \ref{t:main}.
\end{theorem}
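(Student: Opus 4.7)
The proof proceeds by a spine / many-to-one decomposition. Starting from the identity
$$
\E\bigl[F(Y_t,\sQ(t,\zeta),X_1(t)-m_t)\bigr]
=\sum_{i=1}^{N(t)}\E\bigl[F(\cdots)\mathbf{1}_{X_i(t)=\min_j X_j(t)}\bigr],
$$
the many-to-one lemma converts this into $\ee^{t}$ times an expectation over a distinguished spine $\gamma:[0,t]\to\r$, a Brownian motion with drift $\varrho=2$ and variance $\sigma^2=2$; under the size-biased law, siblings are born along $\gamma$ at Poisson rate $2\lambda=2$, each initiating an independent copy of the BBM. The event ``spine is leftmost'' is the event that every off-spine BBM born at time $s$ at position $\gamma(s)$ has its minimum at time $t$ strictly above $\gamma(t)$, of conditional probability $1-G_{t-s}(\gamma(t)-\gamma(s))$ given the spine.

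The next move is to split the off-spine BBMs into the ``old'' ones, born before time $t-\zeta$ and not appearing in $\sQ(t,\zeta)$, and the ``recent'' ones, which do appear in $\sQ(t,\zeta)$. Integrating out the old BBMs via the Poisson structure produces the weight $\exp(-2\int_\zeta^{t} G_v(-Y_t(v))\d v)$ after the change of variables $v=t-s$ with $Y_t(v)=\gamma(t-v)-\gamma(t)$. The recent branches form a Poisson process with conditional rate $2(1-G_v(-Y_t(v)))$ at backward time $v$, and each carries an independent BBM from $Y_t(v)$, run for time $v$, conditioned on having positive minimum---precisely $\sN^*_{Y(v)}(v)$ in the limit, which assembles $\sQ$. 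It remains to identify the limit of the one-spine object: a drifted BM on $[0,t]$, reversed from its endpoint near $m_t$, tilted by $\ee^{-X_1(t)}$ (absorbing the $\ee^{t}$ factor via Girsanov) and weighted by $\exp(-2\int_0^t G_v(-Y_t(v))\d v)$. A Williams-type path decomposition at the minimum of the reversed path yields the limit $-\sigma\Gamma^{(b)}$: a Brownian motion up to the first hitting time of $b=-\inf_{s}Y(s)$, followed by a $3$-dimensional Bessel. Combining with the limit of the integrated weight and normalizing by $c_1$ reproduces the stated joint density of $(Y,-\inf Y)$.

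Independence of $W=\lim(X_1(t)-m_t)$ from $(Y,\sQ)$ follows because the shift by $-X_1(t)$ absorbs all dependence on the macroscopic front position into the scalar $W$, while $(Y_t,\sQ(t,\zeta))$ depend only on local increments near the endpoint; combined with the Lalley--Sellke conditional description of $X_1(t)-m_t$, this yields the asserted factorization in the double limit. The main technical obstacle is the identification of $-\sigma\Gamma^{(b)}$ as the limit of the time-reversed spine: it requires sharp Bramson-type asymptotics $G_t(x+m_t)\to w(x)$ uniform enough to pass to the limit in the integrated weight, together with a careful Williams-type decomposition for endpoint-tilted drifted Brownian motions, and finiteness of $c_1$ to ensure that the resulting law of $Y$ is a bona fide probability measure.
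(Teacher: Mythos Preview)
Your overall strategy matches the paper's: the spine/many-to-one reduction, the split of off-spine branches into ``old'' (before $t-\zeta$) and ``recent'' (contributing to $\sQ(t,\zeta)$), and the Poisson description of the recent branches conditionally on the spine are all exactly as in Section~\ref{S: preuve de 2.3} via the identity~(\ref{caracterisation-epinale}) with $f=\mathbf{1}_{[0,\zeta]}$. However, there are two genuine gaps.

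\textbf{Path localization is missing.} You treat the old-branch weight $\exp\bigl(-2\int_\zeta^t G_v(-Y_t(v))\d v\bigr)$ as if its limit can be read off directly from Bramson asymptotics. It cannot: on the bulk of $[\zeta,t]$ the reversed spine is at height of order $\sqrt{s\wedge(t-s)}$ above its endpoint, and one needs to know this \emph{a priori} to show the integrand is negligible there. The paper inserts the event $A(X_{1,t})$ coming from Proposition~\ref{T: prop de X_1} (the path lies between $s^{1/3}$ and $s^{2/3}$ above the relevant baseline on $[x,t-x]$), then uses the bound $G_v(m_v+r)\le c(|r|+1)\ee^r$ from (\ref{eq:stretching}) to make $\int_M^{t-M}$ deterministically small. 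Without this localization step the passage to the limit in the exponential weight is unjustified, and the dominated-convergence arguments that follow collapse.

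\textbf{Independence of $W$ is not proved.} Your argument that ``the shift by $-X_1(t)$ absorbs all dependence into the scalar $W$'' is a heuristic, not a proof; the Lalley--Sellke description concerns $X_1(t)-m_t$ alone and says nothing about its joint law with the local structure near the tip. The paper obtains independence by an explicit computation: after conditioning on the endpoint $\sigma B_t=y=m_t+z$ (producing a Brownian bridge) and applying the Markov property at time $\zeta$, the limiting expectation \emph{factorizes} as $\varphi_x(z)\cdot\int_0^{x/\sigma}\E[F_1(\sigma\Gamma^{(b)})\ee^{-2\int G^*_v(\sigma\Gamma^{(b)}_v)\d v}]\d b$, with the first factor carrying the $z$-dependence (hence the law of $W$) and the second carrying $(Y,\sQ)$. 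This factorization is the content of Lemmas~\ref{l:g2} and~\ref{l:queue} and is where all the work lies. Relatedly, the decomposition that produces $\Gamma^{(b)}$ is not a Williams decomposition of the reversed spine at its minimum; it is Denisov's theorem (Fact~\ref{f:denisov}) decomposing a Brownian path at its maximum into two meanders, followed by Imhof's meander/Bessel identity. A Williams-type argument appears only later (Lemma~\ref{l:bessel decomposition}) to rewrite the $\varphi_x$ factor.
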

Observe that the parameter $\zeta$ only matters for the decoration point measure in the second coordinate.
\\

The following Theorem \ref{T: path of X_1}
characterizes the joint distribution of the path $s \mapsto X_{1,t}(s)$ that the particle which is the leftmost at time $t$ has followed, of the point measures of the particles to its right, and of the times at which
these particles
have split in the past, all in terms of a Brownian motion functional.
The proof  borrows some ideas from \cite{elie} but is more intuitive in the present setting of branching Brownian motion. Moreover, it also serves as a first step in the %jb2
(much) more involved proof of Theorem \ref{T: structure de Q}
in Section \ref{S: preuve de 2.3}.

For any positive measurable functional $F : C([0,t],\r) \mapsto \r_+$  and any positive measurable function $f:[0, \, t] \to \r_+$, for $n \in \N, (\alpha_1, \ldots, \alpha_n) \in \r_+^n$ and $A_1,\ldots, A_n$ a collection of Borel subsets of $\r_+$  define
$$
I(t) := \E\Big\{ F(X_{1,t}(s), \, s\in [0, \, t])\,
    \exp\Big( -
    \sum_i
    f(t-\tau_i(t))\,
    \sum_{j=1}^n \alpha_j
     \int_{ A_j} \d \mathscr{N}_i(t)  \Big)
    \Big\} ,
$$
where for a point measure $\sN$ and a set $A$ we write $\int_A \d \sN$ in place of $ \sN(A).$

For each $r\ge 0$ and every $x\in \r$ recall that $G_r(x)
= \P \{ X_1(r) \le x\},$ and further define
\begin{eqnarray*}
\overline{G}_r^{(f)}(x)
&:=& \E \Big[ \ee^{-f(r) \sum_{j=1}^n \alpha_j [\int_{x+ A_j} \d \mathscr{N}(r) ] } \, {\bf 1}_{ \{ X_1(r) \ge x\} } \Big].
\end{eqnarray*}
Hence, when $f \equiv 0$ we have $\overline{G}_r^{(f)}(x) = 1-G_r(x)$.

\begin{theorem}\label{T: path of X_1}

We have
\begin{equation}
    I(t)  =\E \Big[ \ee^{\sigma B_t} \, F(\sigma B_s, \, s\in [0, \, t]) \,
    \ee^{- 2  \int_0^t [1-\overline{G}_{t-s}^{(f)}(\sigma B_t-\sigma B_s)] \d s} \Big],
    \label{caracterisation-epinale1}
\end{equation}
where $B$ in the expectation above is a standard Brownian motion. In particular, the path $(s \mapsto X_{1,t}(s), 0\le s\le t)$ is a standard Brownian motion in a potential:
\begin{equation}\label{E: bm in a potential}
\E\Big[  F(X_{1,t}(s), \, s\in [0, \, t])  \Big]=\E \Big[ \ee^{\sigma B_t} \, F(\sigma B_s, \, s\in [0, \, t]) \,
    \ee^{- 2  \int_0^t G_{t-s}(\sigma B_t-\sigma B_s) \d s} \Big].
\end{equation}
\end{theorem}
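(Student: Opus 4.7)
The plan is to use the many-to-one/spine decomposition for BBM and then remove the drift by Girsanov. First, observe that for any $i\le N(t)$ the event $\{X_i(t)=X_1(t)\}$ just says that every off-lineage subtree along the path from the root to $i$ lies entirely to the right of $X_i(t)$. Writing $I(t)$ as a sum over $i$ weighted by this indicator together with the given exponential functional gives
$$
I(t)=\E\Big[\sum_{i=1}^{N(t)}F(X_{i,t}(s),\,s\le t)\prod_{\tau}\Big(\ee^{-f(t-\tau)\sum_{j=1}^n\alpha_j\int_{A_j}\d\sN_{i,\tau}}\,\mathbf{1}_{\{\min\sN_{i,\tau}\ge 0\}}\Big)\Big],
$$
where the product is over the split times $\tau$ on the lineage of $i$, and $\sN_{i,\tau}$ denotes the corresponding off-lineage subtree at time $t$ measured relative to $X_i(t)$.

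I would then invoke the spine many-to-one formula: for any non-negative functional $H$ of a marked lineage,
$$
\E\Big[\sum_{i=1}^{N(t)}H(\text{lineage}_i,\{\text{off-lineage subtrees}\})\Big]=\ee^{\lambda t}\,\widetilde\E\big[H(Y,\{\sN_\tau\}_{\tau\in\Pi})\big],
$$
where under $\widetilde\P$ the spine $Y_s=\sigma B_s+\varrho s$ is a Brownian motion with drift, $\Pi$ is an independent Poisson point process on $[0,t]$ of intensity $2\lambda$, and, conditional on $(Y,\Pi)$, each $\sN_\tau$ is an independent unbiased BBM of length $t-\tau$ started at $Y_\tau$. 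The rate $2\lambda$ on the spine is the classical one; a quick sanity check is that $H=\mathbf{1}_{\Pi=\emptyset}$ yields $\ee^{\lambda t-2\lambda t}=\ee^{-\lambda t}=\P(N(t)=1)$.

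Conditioning on $Y$, the subtree $\sN_\tau$ (in absolute coordinates) equals $Y_\tau$ shifted by an independent unbiased BBM $\sN^{(0)}(r)$ of length $r:=t-\tau$. Hence the sibling constraint $\min\sN_\tau\ge Y_t$ becomes $X_1(r)\ge Y_t-Y_\tau$, and recentering at $Y_t$ turns $\int_{A_j}$ into $\int_{A_j+(Y_t-Y_\tau)}\d\sN^{(0)}(r)$. Comparing with the definition of $\overline G_r^{(f)}$, the conditional expectation of the sibling's contribution given $Y$ is exactly $\overline G_{t-\tau}^{(f)}(Y_t-Y_\tau)$. The exponential formula for the Poisson process $\Pi$ then collapses the product into an exponential integral, giving
$$
I(t)=\ee^{t}\,\widetilde\E\Big[F(Y_s,\,s\le t)\exp\Big(-2\int_0^t\big(1-\overline G_{t-s}^{(f)}(Y_t-Y_s)\big)\d s\Big)\Big].
$$

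Finally, since $\varrho=\sigma^2$, we have $Y_s=\sigma(B_s+\sigma s)$, so Cameron--Martin--Girsanov rewrites the law of $(Y_s)_{s\le t}$ as that of $(\sigma B_s)_{s\le t}$ reweighted by $\ee^{\sigma B_t-t}$; substituting and absorbing the prefactor $\ee^t$ yields exactly \eqref{caracterisation-epinale1}. Formula \eqref{E: bm in a potential} is the special case $f\equiv 0$, since then $\overline G_r^{(0)}(x)=\P(X_1(r)\ge x)=1-G_r(x)$. The main point to get right is the spine branching rate ($2\lambda$ rather than $\lambda$) and the careful bookkeeping of position shifts so that each conditional expectation over an off-spine subtree matches the definition of $\overline G^{(f)}$ exactly; once this dictionary is in place, the Girsanov step is routine.
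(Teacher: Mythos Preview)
Your argument is correct and is essentially the same as the paper's: both rewrite $I(t)$ as a sum over particles with the indicator $\{i=1\}$ expressed as the product over branch times of $\{\min\sN_{i,\tau}\ge 0\}$, apply a spine many-to-one, condition on the spine to turn each off-spine factor into $\overline{G}^{(f)}_{t-\tau}$, and then use the Poisson exponential formula for the rate-$2$ branching times. The only cosmetic difference is that the paper packages the spine via the Chauvin--Rouault change of measure $\Q=M_t\bullet\P$, under which the spine is already a driftless Brownian motion of variance $\sigma^2$ and the factor $\ee^{X_{\Xi_t}(t)}=\ee^{\sigma B_t}$ appears directly from the Radon--Nikodym derivative, whereas you use the ``raw'' many-to-one with a drifted spine and prefactor $\ee^{\lambda t}$ and then remove the drift by Girsanov at the end; since $\varrho=\sigma^2=2\lambda$, the two bookkeepings coincide.
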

This result, which can be seen as a Feynman-Kac representation formula is hardly surprising and is reminiscent of the approach in Bramson's work.

\bigskip

In addition to this ``Brownian motion in a potential" description we also present some properties of a typical path $(X_{1,t}(s), \, s\in [0, \, t]).$  
Let us fix a constant $\eta >0$ (that we will take large enough in a moment). For $t \ge 1$ and $x>0$, we define the good event $A_{t}(x,\eta)$ by
\begin{equation}\label{def:goodevent}
A_{t}(x,\eta) := E_1(x,\eta)\cap E_2(x,\eta) \cap E_3(x,\eta) 
\end{equation}

\begin{figure}\label{F:path}
\begin{center}
\input{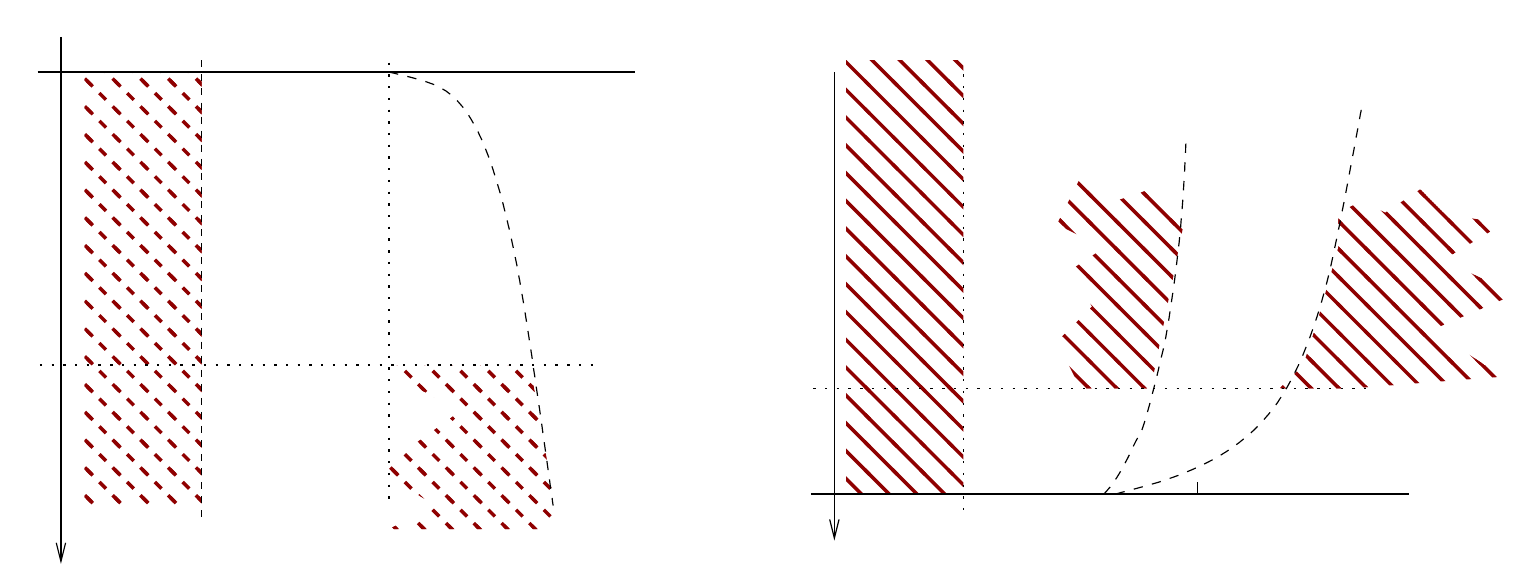_t}%[3cm,12cm][17cm,3cm]{Qstruct2ptrf.pdf}
\caption{The events $E_1(x,\eta),E_2(x,\eta) $ and $E_3(x,\eta)$ together are the event that the paths of particles ending within distance $\eta$ of $m_t$ avoid all the dashed regions. }
\end{center}
\end{figure}

%\begin{figure}
%\centering
%\includegraphics{pathptrf.pdf}%*[3cm,8cm][13cm,3cm]{pathptrf.pdf}
%\leavevmode\input{path.pstex_t}
%\caption{The event $A_t(x,\eta)$. $E_1(x,\eta)$ is the event that all the paths $s\mapsto X_{i,t}(s)$ ending within distance $\eta$ of  $m_t$ avoid the hashed region on the left. $E_2(x,\eta)$ is the event that those same paths stay above $s^{1/3}$ (dashed line) between $x$ and ${t\over 2}$. Finally, $E_3(x,\eta)$ is the event that between ${t\over 2}$ and $t-x$ those paths stay between $X_i(t)+ (t-s)^{1/3}$ and $X_i(t) +(t-s)^{2/3}$ (the two dashed lines). }
%\label{F:path}
%\end{figure}

\noindent where the events $E_i$ (see figure \ref{F:path}) are defined by
\begin{eqnarray*}
E_1(x,\eta) &:=& \Big\{ \forall i \text{ s.t. } |X_i(t)-m_t| <\eta , \ \min_{s\in [0,t]} X_{i,t}(s) \ge -x,\, \min_{s\in [t/2,t]} X_{i,t}(s) \ge m_t -x \Big\},\\
E_2(x,\eta) &:=& \Big\{ \forall i \text{ s.t. } |X_i(t)- m_t| <\eta , \forall s\in[x,{t\over 2}], X_{i,t}(s) \ge s^{1/3} \Big\},\\
E_3(x,\eta) &:=& \Big\{ \forall i \text{ s.t. } |X_i(t)- m_t| <\eta , \forall s\in [x,{t\over 2}],\, X_{i,t}(t-s)-X_i(t) \in [s^{1/3},s^{2/3}]\Big\}.
\end{eqnarray*}

\noindent We will show that the event $A_{t}(x,\eta)$ happens with high probability, the reason being that $s\mapsto X_{1,t}(s)$ looks very much like a Brownian excursion over the curve $s\rightarrow m_s$. We observe that the events $E_i$ depend on $t$ but we omit to write the dependency for sake of brevity.  

\begin{proposition}[Arguin, Bovier and Kistler \cite{ABK1}]\label{T: prop de X_1}
Let $\eta>0$. For any $\varepsilon>0$, there exists $x>0$ large enough such that $\P(A_t(x,\eta)) \ge 1-\varepsilon$ for $t$ large enough.
\end{proposition}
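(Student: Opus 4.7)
The plan is to bound $\P(A_t(x,\eta)^c) \le \sum_{j=1}^{3} \P(E_j^c)$ and control each term by a first-moment estimate. For each $j$, Markov's inequality gives
$$\P(E_j^c) \le \E\bigl[\#\{i : |X_i(t) - m_t| < \eta,\; X_{i,t} \text{ violates condition } j\}\bigr].$$
Applying the many-to-one identity displayed just after \eqref{cond-hab} (with $\lambda=1$, $\varrho=2$, $\sigma=\sqrt 2$), this expectation equals
$$e^t\, \P\bigl(\sqrt 2\, B_t + 2t \in [m_t - \eta, m_t + \eta],\; s \mapsto \sqrt 2\, B_s + 2s \text{ violates condition } j\bigr),$$
where $B$ is a standard Brownian motion. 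The Gaussian density of $\sqrt 2\, B_t + 2t$ at $m_t + u$ combined with $m_t = \tfrac{3}{2}\log t + C_B$ produces, after cancellation with $e^t$, a prefactor of order $t$. Conditioning on $\sqrt 2\, B_t + 2t = m_t + u$ with $u \in [-\eta, \eta]$ converts the process $\sqrt 2\, B + 2\cdot$ into a Brownian bridge of variance $2$ from $0$ to $m_t + u$ on $[0, t]$, so it remains to show that the conditional bad-path probability is $o(1/t)$ as $x \to \infty$, uniformly in $t$.

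For $E_2$ (lower barrier $s \mapsto s^{1/3}$ on $[x, t/2]$) and $E_3$ (corridor $[s^{1/3}, s^{2/3}]$ on $[x, t/2]$, transformed into a barrier estimate on the reversed bridge by time reversal), standard ballot estimates for a Brownian bridge over sub-diffusive concave barriers give conditional violation probabilities bounded by $C\exp(-c\, x^{2/3})$ and $C\exp(-c\, x^{1/3})$ respectively. These decay rates easily beat the order-$t$ prefactor, yielding $\P(E_2^c), \P(E_3^c) \to 0$ as $x \to \infty$ uniformly in $t$.

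The main obstacle is $E_1$. A naive application of the explicit bridge formula $\P(\min < -x \mid \text{bridge from } 0 \text{ to } b) = \exp(-x(x+b)/t)$ with $b = m_t + u$ gives a conditional good-path probability of only $\sim x\, m_t/t = O(x \log t / t)$, so the first-moment bound $O(x \log t)$ is vacuous. This degeneracy reflects the fact that $\E[\#\{i : X_i(t) \approx m_t\}]$ is itself of order $t$ and is dominated by rare atypical configurations. The remedy is to refine the first-moment analysis by combining the two constraints of $E_1$ (lower barrier $-x$ globally and $m_t - x$ on the second half) with the constraints from $E_2$ and $E_3$, effectively replacing the plain Brownian bridge by a bridge conditioned to follow the Bessel-$3$ entropic envelope. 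Using Williams' path decomposition of the bridge into two independent Bessel-$3$ excursions glued at their common minimum, together with the Feynman-Kac factor $\exp(-2\int_0^t G_{t-s}(\sqrt 2\, B_t - \sqrt 2\, B_s)\,\mathrm{d}s)$ appearing in Theorem \ref{T: path of X_1}, the probability that the refined bridge dips below $-x$ decays exponentially in $x$. The resulting first-moment bound is then of order $t\, e^{-c x}$, which tends to $0$ as $x \to \infty$ uniformly in $t$. Summing the three contributions and choosing $x = x(\eta, \varepsilon)$ sufficiently large yields $\P(A_t(x,\eta)^c) \le \varepsilon$ for all sufficiently large $t$.
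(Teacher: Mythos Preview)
Your argument for $E_2$ and $E_3$ contains a genuine error. You claim that for the Brownian bridge from $0$ to $m_t+u$ on $[0,t]$ (variance $\sigma^2=2$) the probability of violating the barrier $s\mapsto s^{1/3}$ on $[x,t/2]$ is $O(\ee^{-cx^{2/3}})$. This is false: the unconstrained bridge has typical value $\frac{s}{t}(m_t+u)=O(s\log t/t)$ at time $s$, which for any fixed $s$ (and large $t$) is far \emph{below} $s^{1/3}$. The violation probability is therefore close to $1$, not exponentially small in $x$. The same objection applies to $E_3$ after time reversal. Consequently your first-moment bound for $E_2^c$ and $E_3^c$ is of order $t$, not $o(1)$, and the argument breaks down exactly as it does for $E_1$---which you yourself correctly identify as a problem.

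The paper circumvents this by \emph{not} treating the $E_j$ independently. It first handles the ``$\min_{[0,t]}\ge -x$'' part of $E_1$ without any first-moment computation, simply invoking the almost-sure finiteness of $\inf_{i,t} X_i(t)$. The delicate part of $E_1$ (the minimum on $[t/2,t]$) is proved by a many-to-one argument that already incorporates the constraint $\min_{[0,t]}X_{i,t}\ge -z$, together with a splitting $[t/2,t-a]\cup[t-a,t]$ with $a=\ee^{x/2}$ and, for the $[t-a,t]$ piece, a stopping-line first moment. Only once $E_1$ is established does the paper bound $\P(E_2^c\cap E_1(z,\eta))$ and $\P(E_3^c\cap E_1(z,\eta))$; the $E_1$ constraint inserted into the many-to-one forces the bridge to stay positive on both halves, which via the $h$-transform turns it into a three-dimensional Bessel process and supplies exactly the missing factor $t^{-3/2}$ that cancels the $t^{3/2}$ prefactor. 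Your proposed fix for $E_1$ via Williams' decomposition and the Feynman--Kac factor of Theorem~\ref{T: path of X_1} is too vague to be a proof and is also misdirected: Theorem~\ref{T: path of X_1} describes the path of the \emph{leftmost} particle, whereas $E_1(x,\eta)$ is a statement about \emph{all} particles within $\eta$ of $m_t$.
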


Observe in particular, that since $\P(|X_{1,t}(t) -m_t | >\eta) \to 0$ when $\eta \to \infty$ we know that for $\eta$ and $x$ large enough, the path $s \mapsto X_{1,t}(s)$ has the properties described in the event $E_1,E_2,E_3$ with arbitrary high probability. Here the exponents $1/3$ and $2/3$ have been chosen arbitrarily in the sense that one could replace them with $1/2 \pm \varepsilon$ for any $0<\varepsilon <1/2$.

\medskip
%jb new

The rest of this paper is organized as follows. Section \ref{S:discut} is devoted to discussions on related results. The main goal of the paper is to prove Theorem \ref{T: structure de Q}, which is also the hardest. We start by proving Theorem \ref{T: path of X_1} in Section \ref{s:proof-thm2.4} which is much easier, thus introducing some tools and ideas we will use throughout the paper. Next, in Section \ref{s:proof-prop2.5}, we prove Proposition \ref{T: prop de X_1} which gives us estimates on the localization of the path followed by the rightmost particle. Section \ref{S: preuve de 2.3} contains the main arguments for the proof of Theorem \ref{T: structure de Q}, and Sections \ref{s:rappels-Denisov-Imhof}, \ref{s:1er-appli-Denisov} and \ref{s:proof-lemma7.1} are devoted to technical intermediary steps. 

The proof of Theorem \ref{t:main} is given last in Section \ref{S:proof of thm main}. We show that by stopping particles when they first hit a certain position $k$ and then considering only their leftmost descendants one recovers a Poisson point measure of intensity $\ee^x \d x$ as $k \to \infty.$ Then,  we show that two particles near $m_t$   have separated in a branching event that was either very recent or near the very beginning of the process and we finally combine those two steps to complete the proof of Theorem  \ref{t:main}.

%%%%%%%%%%%%%%%%%%%%%%%%%%%%%%%%%%%%%%%%%%%%%%%%%%%%
%%%%%%%%%%%%%%%%%%%%%%%%%%%%%%%%%%%%%%%%%%%%%%%%%%%%

\section{Related results and discussion}\label{S:discut}

%%%%%%%%%%%%%%%%%%%%%%%%%%%%%%%%%%%%%%%%%%%%%%%%%%%%
%%%%%%%%%%%%%%%%%%%%%%%%%%%%%%%%%%%%%%%%%%%%%%%%%%%%

The goal of this section is to discuss the relevant literature and to give a brief account of the main differences and similarities between the present work and some related papers. 

\medskip

The description of the extremal point process of the branching Brownian motion is also the subject of \cite{brunet-derrida, brunet derrida} by Brunet and Derrida. There, using the McKean representation and Bramson's convergence result for the solutions of the F-KPP equation \cite{bramson83}, the authors show that the limit point process exists and has the superposability property. From there, using classical arguments (see for instance \cite{maillard}) it can then be shown that the only point processes having this property are those of the type ``decorated exponential Poisson point processes", proving in essence our Theorem \ref{t:main}. Recently, pursuing and adding to those ideas Arguin et al. have also shown the convergence of $\bar \sN(t)$ to a limiting point process with the superposability property (see \cite[Proposition 2.2 and Corollary 2.4]{ABK3}). Therefore, it is really Theorem \ref{T: structure de Q} --- the description of the decoration measure $\sQ$ --- which is the main contribution of the present work. Finally we mention that Madaule \cite{madaule} has proved the analogue of our Theorem \ref{t:main} for non-lattice branching random walks by using the recent result in \cite{elie} on the maximum of branching random walks.

\medskip

Most of the results presented here are identical or very closely related to those obtained independently by Arguin, Bovier and Kistler in a series of papers \cite{ABK1,ABK2,ABK3}. For reference we include here a brief description of their results, stated in the context of our normalization to ease comparison.

The main results of \cite{ABK1} concern the paths followed by the extremal particles and their genealogy. Our Proposition \ref{P: no branching at intermediate times} is the same result as Theorem 2.1 of \cite{ABK1} which says that particles near $m_t$ have either branched near time 0 or near time $t$. Theorems 2.2, 2.3 and 2.5 in \cite{ABK1}  concern the localization of paths of particles which end up near $m_t$ at time $t$. Arguin {\it et al.} show that at intermediary times $s$, with arbitrarily large probability, they lie between ${s \over t} m_t - (s\wedge(t-s))^\alpha$ and  ${s \over t} m_t - (s\wedge(t-s))^\beta$ for $0<\alpha <1/2 <\beta <1.$ This, of course, corresponds exactly to our Proposition \ref{T: prop de X_1}. Since their arguments rely essentially on many-to-one calculations and Bessel bridge estimates, the methods of proof are also very similar. We include the proofs of Propositions \ref{T: prop de X_1} and \ref{P: no branching at intermediate times} for the sake of self-containedness.

\medskip

In \cite{ABK2}, Arguin, Bovier and Kistler using the path localization argument obtained in \cite{ABK1} are able to show that if one only considers particles that have branched off from one another far enough into the past (the point process of maxima of the clusters), then it converges to a Poisson point process with exponential intensity (\cite{ABK2}, Theorem 2). This of course very closely resembles our Proposition \ref{P:convergence jointe vers P et Z}. Their proof relies on the convergence of Laplace functionals (for which a first Lalley-Sellke type representation is given) whereas we simply deduce this from the classical results about records of iid variables.

\medskip
In \cite{ABK3} a complete description of the extremal point process of the branching Brownian motion is given. There, they show that $\bar \sN(t)$ (actually in \cite{ABK3} the point process $\sN$ is centered by $m_t$ instead of $m_t - \log (CZ)$) converges in distribution to a limiting point process which is necessarily an exponential Poisson point process whose atoms are ``decorated" with iid point measures. They give a complete description of this decoration point measure as follows. Let $\sD(t) = \sum_{i=1}^\infty \delta_{X_i(t)-X_1(t)}$ which is a random point measure on $\r_+.$ Conditionally on the event $X_1(t)<0$ it converges in distribution to a limit $\sD$. Theorem 2.1 in \cite{ABK3} thus coincides with our Theorem \ref{t:main} via $\sQ=\sD.$ 

One of the key argument in \cite{ABK3} is to identify the limit extremal point process of the branching Brownian motion with the limit of an auxiliary point process. This auxiliary point process is constructed as follows. Let $(\eta_i, i \in \N)$ be the atoms of a Poisson point process on $\r_+$ with intensity $$a (x e^{b x} ) \d x$$ for some constants $a$ and $b$.
%jb2 
For each $i$, they start form $\eta_i$ an independent branching Brownian motion (with the same $\lambda,\sigma,\varrho$ parameters as the original one) and  call $\Pi(t)$ the point process of the position of all the particles of all the branching Brownian motions at time $t.$ Theorem 2.5 in \cite{ABK3} shows that $\lim_{t \to \infty} \Pi(t) = \lim_{t \to \infty} \bar \sN(t).$ This solves what Lalley and Sellke \cite{lalley-sellke} call the conjecture on the {\it standing wave of particles}. The proof is based on the analysis of Bramson \cite{bramson83} for the solution of the F-KPP equation with various initial conditions and the subsequent work of Lalley and Sellke \cite{lalley-sellke} and Chauvin and Rouault \cite{chauvin-rouault} which allows them to show convergences of Laplace type functionals of the extremal point process. %jb2

In the present work we also prove the convergence of the extremal point process to a decorated exponential Poisson point process. Our main result, Theorem \ref{T: structure de Q}, gives a description of the decoration measure $\sQ$ which is very different from \cite{ABK3}. The methods we use are also different since we essentially rely on path localization and decomposition. It is our hope to exploit the description of $\sQ$ given in Theorem \ref{T: structure de Q} to prove a conjecture of Brunet and Derrida~\cite{brunet-derrida} concerning the asymptotic distribution of the extremal point measure $\mathscr{L}$.

%%%%%%%%%%%%%%%%%%%%%%%%%%%%%%%%%%%%%%%%%%%%%%%%%%%%
%%%%%%%%%%%%%%%%%%%%%%%%%%%%%%%%%%%%%%%%%%%%%%%%%%%%

\section{Proof of Theorem \ref{T: path of X_1}}
\label{s:proof-thm2.4}

%%%%%%%%%%%%%%%%%%%%%%%%%%%%%%%%%%%%%%%%%%%%%%%%%%%%
%%%%%%%%%%%%%%%%%%%%%%%%%%%%%%%%%%%%%%%%%%%%%%%%%%%%

We will use repeatedly the following approach which is known as the {\it spinal decomposition.}
The process
$$
M_t := \sum_{i \le N(t)} \ee^{-X_i(t)}, \qquad t\ge 0,
$$
is a so-called additive martingale, which is critical, not uniformly integrable and converges almost surely to 0. Let $\Q$ be the probability measure on  $\mathscr{F}_\infty$ such that, for each  $t\ge 0$,
$$
\Q_{|_{\mathscr{F}_t}} = M_t \bullet \P_{|_{\mathscr{F}_t}} \, .
$$
Following Chauvin and Rouault (\cite{chauvin-rouault}, Theorem 5), $\Q$ is the law of a branching diffusion with a particle behaving differently. More precisely, for each time $s\ge 0$ we let $\Xi_s \in \{1,\ldots, N(s)\}$ be the label of the distinguished particle (the process $(\Xi_s, \, s\in [0, \, t])$ is called the {\it spine}). The particle with label $\Xi_s$ at time $s$ branches at (accelerated) rate  $2$ and gives birth to normal branching Brownian motions (without spine)
with distribution $\P,$ whereas the process of the position of the spine $(X_{\Xi_s}(s), \, s\in [0, \, t])$ is a driftless Brownian motion of variance $\sigma^2=2$. Furthermore, for each $t\ge 0$ and each $i\le N(t)$,
$$
    \Q \{ \Xi_t =i \, | \, \mathscr{F}_t\}
    =
    {\ee^{- X_i(t)}\over M_t} \, .
$$
We use this principle repeatedly in the present work in the following manner. For each $i\le N(t)$ consider  $\Psi_i$  a random variable which is measurable in the filtration of the
branching Brownian motion
up to time $t$ (i.e., it is determined by the history of the process up to time $t$) and suppose that we wish to compute $\E_\P [ \sum_{i \le N(t)} \Psi_i  ].$ Then, thanks to the above, we have 
\begin{equation}
    \E_\P \Big[ \sum_{i \le N(t)} \Psi_i \Big]
    =
    \E_\Q \Big[ \frac1{M_t} \sum_{i \le N(t)}  \Psi_i \Big]
    =
    \E_\Q \Big[ \ee^{X_{\Xi_t}(t)} \Psi_{\Xi_t} \Big].
    \label{E:many to one version 1}
\end{equation}
We will refer to (\ref{E:many to one version 1}) as the many-to-one principle.

%jb : on a pas vraiment défini formellement le concept de spine \Xi et ça conduit a des difficultés je trouve

For any positive measurable function $F : C(\r_+,\r) \to \r_+,$ any positive measurable function $f:[0, \, t] \to \r_+$, $n \in \N, (\alpha_1, \ldots, \alpha_n) \in \r_+^n$ and $A_1,\ldots, A_n$ a collection of Borel subsets of $\r_+$  define
%jb %todo attention Il faudra supposer F continue \`a un moment !!!!!!!
$$
I(t) := \E\Big\{ F(X_{1,t}(s), \, s\in [0, \, t])\,
    \exp\Big( -
    \sum_i
    f(t-\tau_i(t))\,
    \sum_{j=1}^n \alpha_j
     \int_{ A_j} \d \mathscr{N}_i(t)  \Big)
    \Big\} ,
$$

\noindent as in Section \ref{S:main}. Letting $X_{i,t}(s)$ be the position of the ancestor at time $s$ of the particle at $X_i(t)$ at time $t$, we have
$$
I(t)
=
\E\Big[ \sum_{i\le N(t)}
{\bf 1}_{ \{ i=1\} } \,
F(X_{i,t}(s), \, s\in [0, \, t]) \, \Lambda_i(t)\Big] ,
$$
\noindent with $\Lambda_i(t) := \exp\{ -
\sum_k f(t-\tau^{(i)}_k(t))\, \sum_{j=1}^n \alpha_j  [ \int_{ A_j} \d \mathscr{N}^{(i)}_k ] \}$ where the sequence of times $\tau^{(i)}_k(t)$ are the successive branching times along $X_{i,t}(s)$ enumerated backward from $t$,  and the point measures $\mathscr{N}^{(i)}_k$ are the particles which have branched off from $X_{i,t}(s)$ at time $\tau^{(i)}_k(t)$
$$
\mathscr{N}^{(i)}_k : =\sum_{\ell : \tau_{i,\ell}(t) = \tau^{(i)}_k(t)}\delta_{(X_\ell(t)-X_i(t))}.$$ 

Using the many-to-one principle and the change of probability presented in equation (\ref{E:many to one version 1}) we see that
\begin{eqnarray*}
    I(t)
 &=& \E_\Q \Big[ \ee^{X_{\Xi_t}(t)} \,
    {\bf 1}_{ \{ \Xi_t=1\} } \,
    F(X_{\Xi_s}(s), \, s\in [0, \, t]) \,
    \Lambda_{\Xi_t}(t)\Big]
    \\
 &=& \E_\Q \Big[ \ee^{X_{\Xi_t}(t)} \, F(X_{\Xi_s}(s), \, s\in [0, \, t]) \,
    \Lambda_{\Xi_t}(t) \,
    \prod_{k } {\bf 1}_{ \{  \min \mathscr{N}^{(\Xi_t)}_k
    > 0 \} } \Big] 
\end{eqnarray*}
where we recall that by convention, for a point measure $\sN$, $\min \sN$ is the infimum of the support of $\sN$. 

%Conditioning on the $\sigma$-algebra generated by the spine (including the successive branching times $(\tau_i^{(\Xi_t)}$), the family $\mathscr{N}_{t, \Xi}^{(i)}$, when $i$ varies, are (conditionally) independent and have the law of $X(\Xi_{\tau_i^\Xi}) + \mathscr{N}(t-\tau_i^\Xi)$, where $(\mathscr{N}(s), \, s\ge 0)$ under $\P$ is the process of the positions in a branching Brownian motion, independent of everything else. Therefore,
Conditioning on the $\sigma$-algebra generated by the spine (including the successive branching times) we obtain
$$
I(t)
=
\E_\Q \Big[ \ee^{X_{\Xi_t}(t)} \, F(X_{\Xi_s}(s), \, s\in [0, \, t]) \,
    \prod_i \overline{G}_{t-\tau_i^{(\Xi_t)}(t)}^{(f)} (X_{\Xi_t}(t)-X_{\Xi_t,t} (\tau_i^{(\Xi_t)}(t))) \Big] ,
$$

\noindent where, for any $r\ge 0$ and any $x\in \r$,
\begin{equation}
    \overline{G}_r^{(f)}(x)
    :=
    \E \Big[
    \ee^{-f(r) \sum_{j=1}^n \alpha_j
    [\int_{A_j+x} \d \mathscr{N}(r) ] } \,
    {\bf 1}_{ \{ \min \mathscr{N}(r) \ge x\} } \Big].
    \label{Gbarf}
\end{equation}

\noindent Since $(\tau_i^{(\Xi_t)}(t),i\ge 0)$ is a rate $2 $ Poisson process under  $\Q$, we arrive at:\footnote{We recall the Laplace functional of a point Poisson process $\mathscr{P}$: $\E[\exp( -\int f \d \mathscr{P})] = \exp[- \int (1-\ee^{-f}) \d \mu]$, where $\mu$ is the intensity measure.}
\begin{eqnarray}
    I(t)
 &=&\E_\Q \Big[ \ee^{X_{\Xi_t}(t)} \, F(X_{\Xi_s}(s), \, s\in [0, \, t]) \,
    \ee^{- 2  \int_0^t [1-\overline{G}_{t-s}^{(f)}(X_{\Xi_t}(t)-X_{\Xi_s}(s))] \d s} \Big]
    \nonumber
    \\
 &=&\E \Big[ \ee^{\sigma B_t} \, F(\sigma B_s, \, s\in [0, \, t]) \,
    \ee^{- 2  \int_0^t [1-\overline{G}_{t-s}^{(f)}(\sigma B_t-\sigma B_s)] \d s} \Big],
    \label{caracterisation-epinale}
\end{eqnarray}

\noindent where, in the last identity, we used the fact that $(X_{\Xi_s}(s), \, s\in [0, \, t])$ under $\Q$ is a centered Brownian motion (with variance $\sigma^2=2$). This yields Theorem \ref{T: path of X_1}.\hfill$\Box$

\bigskip

\noindent {\bf Remark.} Although we do not need it in the present paper, we mention that (\ref{caracterisation-epinale}) gives the existence and the form of the density of $X_1(t)$ by taking $f\equiv 0$ and $F$ to be the projection on the coordinate $s=t$:
\begin{eqnarray*}
    \P\{ X_1(t) \in \!\d y \}
 &=&\ee^y \,
    \E \Big[ \ee^{- 2
    \int_0^t G_{t-s}(\sigma B_t-\sigma B_s) \d s} \,
    {\bf 1}_{ \{ B_t \in {\!\d y\over \sigma}\} } \Big]
    \\
 &=&\ee^y \,
    \E_{0,{y\over \sigma}}^{(t)} \Big[
    \ee^{- 2  \int_0^t G_{t-s}(\sigma B_t-\sigma B_s) \d s}
    \Big] \,
    \P \{ B_t \in {\!\d y\over \sigma}\}.
\end{eqnarray*}

\section{Properties of the path followed by the leftmost particle: %Proof
proof
of Proposition \ref{T: prop de X_1}}

\label{s:proof-prop2.5}

%%%%%%%%%%%%%%%%%%%%%%%%%%%%%%%%%%%%%%%%%%%%%%%%%%%
%%%%%%%%%%%%%                                                               %%%%%%%%%%%%%%%%%
%%%%%%%%%%%%%           spinal decomp and many 2 1      %%%%%%%%%%%%%%%%%
%%%%%%%%%%%%%                                                               %%%%%%%%%%%%%%%%%
%%%%%%%%%%%%%%%%%%%%%%%%%%%%%%%%%%%%%%%%%%%%%%%%%%%

%jb
When applying the many-to-one principle as in \eqref{E:many to one version 1}, if the functional $\Psi_{\Xi} $ only depends on the path of $X_{\Xi_s}(s)$ then the last expectation is simply the expectation of a certain event for the standard Brownian motion. For instance, suppose that we want to check if there exists a path $(X_{i,t}(s), s\in [0,t])$ with some property in the tree. Let $A$ be a measurable subset of continuous functions $[0,t] \mapsto \r$. Then
\begin{equation}\label{ineqmanyto1}
\P(\exists i \le N(t) : (X_{i,t}(s), s\in [0,t]) \in A )  \le \P(\ee^{\sigma B_t}  ; (\sigma B_s , s\in [0,t]) \in A )
\end{equation}
where $(B_s,s \ge 0)$ is a standard Brownian motion under $\P.$
This is the main tool
we use in proving Proposition \ref{T: prop de X_1}.

Let $(B_s,s \ge 0)$ denote a standard Brownian motion. Before proceeding to the proof of Proposition \ref{T: prop de X_1}, let us recall (see, for example, Revuz and Yor~\cite{revuz-yor}, Exercise III.3.14) the joint distribution of $\min_{[0,t]}B_s$ and $B_t$: for any $x>0$, $y>0$ and $t>0$,
\begin{eqnarray}
    \P\Big( \min_{[0,t]} B_s>-x,\;  B_t+x \in \!\d y\Big)
 &=&\Big( {2\over \pi  t}\Big)^{\! 1/2} \, \ee^{-{ x^2+y^2\over 2 t}} \sinh \Big({xy\over  t}\Big) \d y
    \nonumber
    \\
 &\le& \Big( {2\over \pi  t^3}\Big)^{\! 1/2} xy \d y \, ,
    \label{joint-min-Bt}
\end{eqnarray}

\noindent the last inequality following from the facts that $\sinh z\le ze^z$ for $z\ge 0$, and that $\ee^{-{ x^2+y^2\over 2 t}+{y x\over  t}}\le 1$.

We now turn to the proof of Proposition \ref{T: prop de X_1}. Let $J_\eta(t): = \{ i \le N(t) : |X_i(t) -m_t|<\eta \}$ where $m_t={3\over 2} \log t +C_B$ by (\ref{bramson-constant}). We recall  that for $t \ge 1$ and $x>0$, we define the good event $A_{t}(x,\eta)$ by
\begin{equation*}
    A_{t}(x,\eta)
    :=
    E_1(x, \eta )\cap E_2(x, \eta) \cap E_3(x,\eta)
\end{equation*}

\noindent where the events $E_i$ are defined by
\begin{eqnarray*}
E_1(x,\eta) &:=& \Big\{ \forall i \in J_\eta(t) , \ \min_{[0,t]} X_{i,t}(s) \ge -x,\, \min_{[{t\over 2},\, t]} X_{i,t}(s) \ge m_t -x \Big\},\\
E_2(x,\eta) &:=& \Big\{ \forall i \in J_\eta(t) , \forall s\in[x,\, {t\over 2}], X_{i,t}(s) \ge s^{1/3} \Big\},\\
E_3(x,\eta) &:=& \Big\{ \forall i \in J_\eta(t)  , \forall s\in [{t\over 2},\, t-x],\, X_{i,t}(s)-X_i(t) \in [(t-s)^{1/3},(t-s)^{2/3}]\Big\}.
\end{eqnarray*}

\noindent We now prove the claim of Proposition \ref{T: prop de X_1}:
For any $\varepsilon>0$ and $\eta>0$, there exists $x>0$ large enough such that $\P(A_t(x,\eta)) \ge 1-\varepsilon$ for $t$ large enough.

\bigskip

\noindent {\it Proof}. The  notation $c$ denotes a constant (that may depend on $\eta$) which can change from line to line. We deal separately with the events $E_i(x,\eta)$. We want to show that for any $i\in\{1,2,3\}$, there exists $x$ large enough such that $\P((E_i(x,\eta))^\complement) \le \varepsilon$ for $t$ large enough.\\

{\it Bound of $\P(E_1(x,\eta)^\complement)$}.\\
First, observe that $\min\{X_i(t), i\le N(t), t\ge 0\}$ is an a.s.\ finite random variable and therefore
$$\P\Big(\min_{i \in J_\eta(t), s \in [0,t]} X_{i,t}(s) \le -x\Big) \le  \P\Big(\min\{X_i(t), i\le N(t), t\ge 0\} \le -x\Big) \le \varepsilon$$ for $x$ large enough. It remains to bound the probability to touch level $m_t -x$ between ${t\over 2}$ and $t$. By the previous remarks, we can assume that $\min_{[0,t]} X_{i,t}(s)\ge -z$ for all $i \in J_\eta(t)$. We claim that, for any $z,\eta\ge 0$ , there exists $c>0$ and a function $\varepsilon_t \to 0$  such that for any $x\ge 0$ and $t\ge 1$,
\begin{equation}\label{claimE1}
\P \Big\{\exists i \in J_\eta(t) , \ \min_{s\in[0,\, t]} X_{i,t}(s)\ge -z, \min_{s\in [{t\over 2},\, t]} X_{i,t}(s) = m_t -x \pm 1   \Big\} \le c\ee^{-cx} + \varepsilon_t
\end{equation}

\noindent where $y=u\pm v$ stands for $y\in[u-v,u+v]$. This will imply the bound on $E_1(x,\eta)^\complement$. Let us prove the claim. We see that the probability on the left-hand side is $0$ if $x> m_t  +z +1$ (indeed, if $x> m_t  +z +1$ and $\min_{s\in [{t\over 2},\, t]} X_{i,t}(s) \le  m_t -x +1 < -z$, then it is impossible to have $\min_{s\in[0,\, t]} X_{i,t}(s)\ge -z$). We then take $x\le {7\over 4}\log t$ (any constant lying in $({3\over 2}, \, 2)$ would do the job in place of ${7\over 4}$). 

Let $a\in (0, \, {t\over 2})$ (at the end, $a=\ee^{x/2}$). 
We discuss whether $\{\min_{s\in[t/2,t-a]} X_{i,t}(s) = m_t -x \pm 1\}$ or $\{\min_{s\in[t-a,t]} X_{i,t}(s) =  m_t -x \pm 1\}$. We denote by $p_{\rm{claim}}^{[t/2,t-a]}(x)$ (resp. $p_{\rm{claim}}^{[t-a,t]}(x)$) the probability in (\ref{claimE1}) on the event $\{\min_{s\in[t/2,t-a]} X_{i,t}(s) =  m_t -x \pm 1\}$ (resp. $\{\min_{s\in[t-a,t]} X_{i,t}(s) =  m_t -x \pm 1\}$). Equation (\ref{ineqmanyto1}) provides us with the following bound
%jb2\footnote{To simplify notations in the present section we drop the factor $\sigma$ in front of $B$.}
\begin{equation}\label{eq:claim=P(B)}
 p_{\rm{claim}}^{[t/2,t-a]}(x) \le \ee^{\eta+C_B} t^{3/2}\P(B)
\end{equation}
where
$$
\P(B) := \P\Big\{ \sigma \underline B^{[0,t]}\ge -z, \sigma \underline B^{[{t\over 2},t-a]} = m_t -x \pm 1, \sigma \underline B^{[t/2,t]} = m_t -x \pm 1, \sigma B_t= m_t \pm \eta \Big\}
$$
and $
\underline B^{[b_1,b_2]} := \min_{s\in[b_1,b_2]} B_s.
$ By reversing time, we see that
\begin{eqnarray*}
    \P(B)
&\le&\P\Big\{ \sigma \underline B^{[0,t]} \ge -m_t -(z+\eta) , \sigma \underline B^{[0,a]} \ge -\eta-x-1, \\
&& ~~~~~~~~~~~~~~~~ \sigma B_t= -m_t \pm \eta,\sigma \underline B^{[a,t/2]} =-x \pm(\eta+1)  \Big\}.
\end{eqnarray*}

\noindent  By the Markov property at time $t/2$, we obtain
\begin{eqnarray*}
\P(B)
&\le&
\E\Big[ {\bf 1}_{\{\sigma \underline B^{[a,t/2]} = -x \pm(\eta+1)\}}{\bf 1}_{\{\sigma \underline B^{[0,a]} \ge -\eta-x-1\}}\\
 &&\qquad \times \P_{B_{t/2}} \Big\{ \sigma \underline B^{[0,t/2]}\ge -m_t -(z+\eta) ,\sigma B_{t/2} = -m_t \pm \eta \Big\}  \Big],
\end{eqnarray*}

\noindent where, for any $y\in \r$, $\P_y$ is the probability under which $B$ starts at $y$: $\P_y(B_0=y) =1$. (So $\P_0= \P$). By (\ref{eq:claim=P(B)}) and (\ref{joint-min-Bt}), it follows that
\begin{eqnarray*}
 p_{\rm{claim}}^{[t/2,t-a]}(x)
 &\le&
 c  (z+2\eta)^2 \E\Big[ {\bf 1}_{\{\sigma \underline B^{[a,t/2]} = -x \pm(\eta+1),\sigma \underline B^{[0,a]} \ge -\eta-x-1\}}(\sigma B_{t/2} +m_t +(z+\eta))\Big]\\
 &\le&
c  (z+2\eta)^2 (\E_1 +\E_2)
\end{eqnarray*}

\noindent where
\begin{eqnarray*}
\E_1 &:=& \E\Big[ {\bf 1}_{\{\sigma \underline B^{[a,t/2]} = -x \pm(\eta+1),\sigma \underline B^{[0,a]} \ge -\eta-x-1\}} (\sigma B_{t/2} + \eta+x+1)\Big],\\
\E_2 &:=& (m_t +z-x-1)\P\Big\{ \sigma \underline B^{[a,t/2]} = -x \pm(\eta+1),\sigma \underline B^{[0,a]} \ge -\eta-x-1 \Big\}.
\end{eqnarray*}

\noindent To bound $\E_2$ is easy. We have $|\E_2| \le O(\log t)\, \P(\sigma  \underline B^{[0,t/2]} \ge -\eta - x - 1)=O((\log t)^2)\, t^{-1/2}$ uniformly in $x\le {7\over 4} \log t$.
Now consider $\E_1$.
We note that
$(\sigma B_{t/2} +\eta+x+1){\bf 1}_{\{ \sigma \underline B^{[0,t/2]} \ge -\eta-x-1\}}$
is the $h$-transform of the three-dimensional Bessel process, and we denote by $(R_s,s\ge 0)$ a three-dimensional Bessel process. Then,
\begin{eqnarray*}
\E_1 = (\eta+x+1)\P_{\eta+x+1}(\sigma \underline R^{[a,t/2]}) \le 2(\eta + 1)) \le (\eta+x+1)\P_{\eta+x+1}( \min_{s\ge a} \sigma R_s \le 2(\eta+1) )
\end{eqnarray*}

\noindent with natural notation. The infimum of a three-dimensional Bessel process starting from $x$ is uniformly distributed in $[0,x]$ (see Revuz and Yor~\cite{revuz-yor}, Exercise V.2.14). Applying the Markov property at time $a$, we get $\E_1 \le {2(\eta+1)\over \sigma}(\eta+x+1)\E_{\eta+x+1}[1/R_a]\le c (\eta+x+1) a^{-1/2}$. We take $a=\ee^{x/2}$. The preceding inequality implies that for any $x\ge 0$ and $t\ge 1$,

\begin{equation}\label{eq:claimE1a}
 p_{\rm{claim}}^{[t/2,t-a]}(x) \le c(\eta+x+1)^2\ee^{-x/4} + c (\log t)^2 t^{-1/2}.
\end{equation}

\noindent We deal now with the probability $p_{\rm{claim}}^{[t-a,t]}(x)$. In this case, the minimum on $[t-a,t]$ belongs to $[m_t -x-1,m_t -x+1]$. Since we know that $p_{\rm{claim}}^{[t/2,t-a]}(x)$ is small, we can restrict to the case where the minimum on $[t/2,t-a]$ is greater than $m_t -x+1$, i.e.,
\begin{eqnarray*}
&& p_{\rm{claim}}^{[t-a,t]}(x)
\le
\sum_{y=x}^{\lfloor 2\log t \rfloor} p_{\rm{claim}}^{[t/2,t-a]}(y) + \\
&&  \P\Big\{ \exists i \in J_\eta(t): \,   \underline X_{i,t}^{[0,t]} \ge -z,\, \underline X_{i,t}^{[t/2,t-a]} >  m_t -x +1, \,
  \underline X_{i,t}^{[t-a,t]} \le m_t -x +1 \Big\}.
\end{eqnarray*}

\noindent From (\ref{eq:claimE1a}), we know that $\sum_{y=x}^{\lfloor 2\log t \rfloor} p_{\rm{claim}}^{[t/2,t-a]}(y)\le o(t) + c\, \ee^{-x/8}$ with as usual $\underline X_{i,t}^{[a,b]}:=\min_{s\in[a,b]} X_{i,t}(s)$.

Suppose that we kill particles as soon as they hit the position $-z$ during the time interval $[0,t/2]$ and as soon as they are left of or at position $m_t-x+1$ during the time interval $[t/2,t].$ Call $\mathcal S^{[t-a,t]}$ the number of particles that are killed during the time interval $[t-a,t].$ 
Hence,
\begin{eqnarray}\label{eq:claimE2b}
p_{\rm{claim}}^{[t-a,t]}(x)
\le
o(t)+ c\ee^{-x/8} + \E[\#\mathcal S^{[t-a,t]}].
\end{eqnarray}

%% xxxx |u| mal defini
%jb %todo reformulated to get rid of |u|

We observe that by stopping particles either at time $t$ or when they first hit $-z$ during $[0,t/2]$ or  $m_t-x+1$ during the time interval $[t/2,t],$ we are defining a so-called {\it dissecting stopping line}. Stopping lines were introduced and studied --- among others --- by \cite{chauvin91, jagers89} essentially for branching random walks. More recently, they have been used with great efficacy by e.g. Kyprianou in the context of branching Brownian motion to study traveling wave solutions to the F-KPP equation \cite{kyprianou}. %jb2
More precisely, for a continuous path $X : \r_+ \to \r$ let us call $T(X)$ the stopping time
$$
T(X) : = \inf\{s \le t/2 : X(s) \le -z\} \wedge \inf\{s \in [t/2,t] : X(s) \le m_t-x+1 \} \wedge t
$$
and for $i \le N(t)$ define $T_i := T(X_{i,t}(\cdot)).$ We also need a notation for the label of the progenitor at time $T_i$ of the particle at $X_i(t)$ at time $t$: let $J_i \le N(T_i)$ be the almost surely unique integer such that 
$$
X_{J_i}(T_i) = X_{i,t}(T_i).
$$ 
We now formally define the stopping line $\ell$ by
$$
\ell := \text{enum}((J_i,T_i)_{i \le N(t)}) 
$$
where $\text{enum}$ means that $\ell$ is an enumeration without repetition. In general, stopping lines can be far more sophisticated objects, and $\ell$ is a particularly simple example of this class, which is bounded by $t$ (and thus {\it dissecting}).

We now need a generalization of the many-to-one principle \eqref{E:many to one version 1} to stopping lines. Although this can now be considered {\it common knowledge}, surprisingly only \cite[Lemma 3.1 and 3.2]{maillard12} gives the result in sufficient generality for our purposes. 
\begin{fact}\label{many21 stop line}
Let  $g : (x,t) \mapsto g(x,t), \r \times \r_+ \to \r$  be measurable. Then, if $X(t) = \sigma B_t +\varrho t$ where $B$ is a standard Brownian motion
$$
\E_\P [\sum_{(i,t) \in \ell} g(X_{i}(t),t) ] = \E [e^{\lambda T(X)} g(X_{T(X)},T(X)].
$$
\end{fact}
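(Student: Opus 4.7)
The plan is to reduce Fact \ref{many21 stop line} to the ordinary many-to-one identity \eqref{E:many to one version 1} applied at the deterministic time $t$, exploiting the fact that the stopping line $\ell$ is dissecting and uniformly bounded by $t$. Since $T_i := T(X_{i,t}(\cdot)) \le t$ for every $i \le N(t)$, each particle alive at time $t$ has a unique ancestor $(J_i, T_i) \in \ell$, and grouping the sum of any bounded measurable test function $h(x,s)$ over $i \le N(t)$ by its stopping-line ancestor yields
\begin{equation*}
\sum_{i \le N(t)} h(X_{i,t}(T_i), T_i) \;=\; \sum_{(J,T) \in \ell} D_{J,T}(t)\, h(X_J(T), T),
\end{equation*}
where $D_{J,T}(t) := \#\{i \le N(t) : (J_i, T_i) = (J, T)\}$ counts the descendants of $(J,T)$ alive at time $t$.

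The key probabilistic input is the strong branching property at the stopping line: conditionally on the $\sigma$-algebra $\sF_\ell$ generated by the positions and genealogy up to $\ell$, each element $(J,T) \in \ell$ seeds an independent branching Brownian motion of duration $t-T$, and consequently $\E[D_{J,T}(t) \mid \sF_\ell] = \ee^{\lambda(t-T)}$. Taking expectations therefore gives
\begin{equation*}
\E\Big[\sum_{i \le N(t)} h(X_{i,t}(T_i), T_i)\Big] \;=\; \E\Big[\sum_{(J,T) \in \ell} \ee^{\lambda(t-T)}\, h(X_J(T), T)\Big].
\end{equation*}
On the other hand, $i \mapsto h(X_{i,t}(T_i), T_i)$ is a measurable functional of the ancestral path $X_{i,t}(\cdot)$ alone, so the path-functional form of the many-to-one principle used in Section~\ref{s:proof-thm2.4} (based on \eqref{E:many to one version 1} and the fact that under $\Q$ the spine is distributed as $\sigma B + \varrho\cdot$) identifies the same expectation with $\ee^{\lambda t}\,\E[h(X_{T(X)}, T(X))]$, where $X_s = \sigma B_s + \varrho s$.

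Combining the two displays and cancelling $\ee^{\lambda t}$ yields the intermediate identity $\E[h(X_{T(X)}, T(X))] = \E\big[\sum_{(J,T)\in \ell} \ee^{-\lambda T} h(X_J(T), T)\big]$, and Fact \ref{many21 stop line} follows by specialising to $h(x,s) := \ee^{\lambda s} g(x,s)$. The one step requiring genuine care is the strong branching property on $\sF_\ell$: one must justify that $\sF_\ell$ is well-defined, that the subtrees rooted at distinct elements of $\ell$ are conditionally independent given $\sF_\ell$, and that $D_{J,T}(t)$ indeed has the usual mean $\ee^{\lambda(t-T)}$. Because $\ell$ is dissecting and uniformly bounded by $t$, this falls squarely within the Chauvin--Jagers framework of stopping lines and is precisely the content of the results from \cite{maillard12} invoked in the statement; beyond this foundational set-up the computation above is essentially bookkeeping.
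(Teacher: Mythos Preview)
Your argument is correct. The paper itself does not give a proof of Fact~\ref{many21 stop line}; it cites \cite{maillard12} and remarks that one can ``adapt the proofs for the fixed-time many-to-one lemma in \cite{hardyharris,harrisroberts} to the case of dissecting stopping-lines.'' Your route is slightly different from a direct adaptation of the spine change-of-measure: rather than redoing the Girsanov/spine construction at the level of the stopping line, you \emph{reduce} to the fixed-time identity already established in the paper (the display before~(\ref{cond-hab}) and~\eqref{E:many to one version 1}) by inserting the descendant count $D_{J,T}(t)$ and invoking the strong branching property on $\sF_\ell$ to compute its conditional mean. This is arguably cleaner in the present context, since it recycles the fixed-time result verbatim and isolates the only nontrivial input --- the Chauvin--Jagers strong Markov property at dissecting stopping lines --- which you correctly flag and which is in any case what the cited references supply. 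One small cosmetic point: you introduce $h$ as ``bounded measurable'' and then set $h(x,s)=\ee^{\lambda s}g(x,s)$; since $T\le t$ this is bounded on the relevant domain whenever $g$ is, and the general measurable case follows by the usual monotone-class or positive/negative-part argument, so there is no gap.
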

To see this, one can for instance adapt the proofs for the fixed-time many-to-one lemma in \cite{hardyharris,harrisroberts} to the case of dissecting stopping-lines.

Once one factors in the Girsanov term to get rid of the drift, one sees that
\begin{align*}
\E_\P [\sum_{(i,t) \in \ell} g(X_{i}(t),t) ] &= \E [e^{\sigma B_{T(\sigma B)} }  g(\sigma B_{T(\sigma B)},T(\sigma B)] \\
&= \E_\Q [e^{ X_{\Xi_{T(\Xi)}}(T(X_\Xi))}   g(X_{\Xi_{T(\Xi)}}(T(X_\Xi)),T(X_\Xi))].
\end{align*}
By applying this with $g(x,s)= \mathbf{1}_{s \in (t-a,t)}$ we see that 
$$
\E[\# \mathcal S^{[t-a,t]}] =\ee^{C_B}t^{3/2}\ee^{-x+1} \P\Big\{ \sigma \underline B^{[0,t/2]} \ge -z,\, T^{t/2}_{(m_t -x+1)/\sigma} \in [t-a,t],\sigma B_{t/2}\ge m_t -x+1\Big\}.
$$
%jb2

\noindent where $T^{t/2}_{y}:= \min \{ s\ge t/2\,:  B_s = y \}$. As usual, we apply the Markov property at time $t/2$ so that
\begin{eqnarray*}
&&\P\Big\{\sigma \underline B^{[0,t/2]} \ge -z,\, T^{t/2}_{(m_t -x+1)/\sigma} \in [t-a,t], \sigma B_{t/2}\ge m_t -x+1\Big\}\\
&=&
\E\left[{\bf 1}_{\{ \sigma \underline B^{[0,t/2]} \ge -z \}} \P_{B_{t/2}}\Big\{T_{(m_t -x+1)/\sigma} \in [t/2-a,t/2]\Big\}\right]
\end{eqnarray*}

\noindent where $T_y := \min\{s\ge 0:  B_s=y\}$ is the hitting time at level $y$. We know that $\P(T_y\in du)={y\over \sqrt{2\pi}}u^{-3/2}\ee^{-{y^2\over 2u}}\d u\le c y u^{-3/2}\d u$ for $u\ge 0$. It follows that for
some constant $c>0$ and
any $a\in [1,t/3]$
\begin{eqnarray*}
&&\P\Big\{\sigma \underline B^{[0,t/2]} \ge -z,\, T^{t/2}_{(m_t -x+1)/\sigma} \in [t-a,t], \sigma B_{t/2}\ge m_t -x+1\Big\}\\
&\le&
c  a t^{-3/2}\E[ B_{t/2} {\bf 1}_{\{\sigma \underline B^{[0,t/2]} \ge -z\}}]=
c a t^{-3/2} z.
\end{eqnarray*}

\noindent Thus, $\E[\#\mathcal S^{[t-a,t]}] \le c a z \ee^{-x}  = cz \ee^{-x/2} $ for $a=\ee^{x/2}$.  Claim (\ref{claimE1}) now follows from equations (\ref{eq:claimE1a}) and (\ref{eq:claimE2b}).

\bigskip

{\it Bound of $\P((E_2(x,\eta))^\complement)$}.\\
We can restrict to the event $E_1(z,\eta)$  for $z$ large enough. By the many-to-one principle, we get
$$
 \P(E_2(x,\eta)^\complement,E_1(z,\eta))
\le \ee^{\eta+C_B} t^{3/2} \P(\widehat{B})
$$
where $\P(\widehat{B})$ is defined by
$$
\P(\widehat{B}):=\P\Big\{ \exists s\in [x,t/2]: \sigma B_s \le s^{1/3},\sigma  \underline B^{[0,t/2]} \ge -z, \sigma \underline B^{[t/2,t]} \ge m_t -z, \sigma B_t \le  m_t +\eta \Big\}.
$$

\noindent We will actually bound the probability 
\begin{eqnarray}
 &&\P(\widehat{B},\d r)
    \label{Bhat}
    \\
 &:=&\P\Big\{ \exists s\in [x,t/2]: \sigma B_s \le s^{1/3}, \sigma \underline B^{[0,t/2]} \ge -z, \sigma \underline B^{[t/2,t]} \ge m_t -z, \sigma B_t \in m_t+\d r \Big\}.
    \nonumber
\end{eqnarray}
 
\noindent Applying the Markov property at time $t/2$ yields that
\begin{eqnarray*}
&& \P(\widehat{B},\d r)\\
&=&
\E\left[ {\bf 1}_{\{\exists s\in [x,t/2]: \sigma B_s \le s^{1/3}\}} {\bf 1}_{\{ \sigma \underline B^{[0,t/2]} \ge -z\}} \P_{B_{t/2}}\Big\{\sigma \underline B^{[0,t/2]} \ge m_t -z, \sigma B_{t/2} \in m_t+\d r \Big\} \right]\\
&\le&
c (r+z) t^{-3/2} \E\left[ {\bf 1}_{\{\exists s\in [x,t/2]: \sigma B_s \le s^{1/3}\}} {\bf 1}_{\{\sigma  \underline B^{[0,t/2]} \ge -z\}} (\sigma B_{t/2} -  m_t +z)_+ \right]\d r\\
&\le&
c (r+z)t^{-3/2} \E\left[ {\bf 1}_{\{\exists s\in [x,t/2]: \sigma B_s \le s^{1/3}\}} {\bf 1}_{\{ \sigma \underline B^{[0,t/2]} \ge -z\}} (\sigma B_{t/2} +z) \right]\d r
\end{eqnarray*}

\noindent where the second inequality comes from equation (\ref{joint-min-Bt}), and we set $y_+:=\max(y,0)$. We recognize the $h$-transform of the Bessel process. Therefore
\begin{equation}\label{P(B,dr)}
\P(\widehat{B},\d r) \le c z (r+z)t^{-3/2}\P_z( \exists s\in [x,t/2]: \sigma R_s \le z+s^{1/3})\d r
\end{equation}
where as before $(R_s,s\ge 0)$ is a three-dimensional Bessel process. In particular, $\P(\widehat{B}) = \int_{-z}^\eta \P(\widehat{B},\d r)\le c z(z+\eta)^2t^{-3/2}\P_z( \exists s\in [x,t/2]: \sigma R_s \le z+s^{1/3})$. This yields that 
\begin{eqnarray*}
\P(E_2(x,\eta)^\complement,E_1(z)) &\le& \ee^{\eta+C_B} c z(z+\eta)^2 \P_z( \exists s\in [x,t/2]: \sigma R_s \le z+s^{1/3})\\
&\le&
\ee^{\eta+C_B} c z (z+\eta)^2 \P_z( \exists s\ge x: \sigma R_s \le z+s^{1/3})
\end{eqnarray*}

\noindent  and we deduce that $\P(E_2(x,\eta)^\complement,E_1(z))\le \varepsilon$ for $x$ large enough.

\bigskip

{\it Bound of $\P((E_3(x,\eta))^\complement)$}.\\
The bound on $\P((E_3(x,\eta)^\complement))$ works similarly. We have by the many-to-one principle
\begin{equation}\label{eq:P(B)=E3}
 \P(E_3(x,\eta)^\complement,E_1(z,\eta),E_2(z,\eta)) \le \ee^{\eta+C_B} t^{3/2} \P(\widetilde{B})
 \end{equation}
with $\P(\widetilde{B})$ defined by
\begin{eqnarray*}
&&\P\Big\{ \exists s\in [t/2,t-x]: \sigma(B_s -B_t) \notin [(t-s)^{1/3},(t-s)^{2/3}],
\sigma \underline B^{[0,t/2]} \ge -z,\\
 && ~~~~~\sigma \underline B^{[t/2,t]} \ge m_t -z, \sigma B_t =  m_t \pm \eta\Big\}.
\end{eqnarray*}
Let 
\begin{eqnarray}\label{Btilde}
\nonumber &&\P(\widetilde{B},\d r) := \P\Big\{ \exists s\in [t/2,t-x]: \sigma(B_s -B_t) \notin [(t-s)^{1/3},(t-s)^{2/3}],
\sigma \underline B^{[0,t/2]} \ge -z, \\
 && ~~~ \qquad \qquad \qquad   \sigma \underline B^{[t/2,t]} \ge m_t -z, \sigma B_t \in  m_t + \d r\Big\}.
\end{eqnarray} 

\noindent Reversing time, we get
\begin{eqnarray}
\nonumber   \P(\widetilde{B},\d r)  &\le &\P\Big\{ \exists s\in [x,t/2]: \sigma B_s \notin [s^{1/3},s^{2/3}], \sigma \underline B^{[t/2,t]} \ge -m_t-z-\eta, \\
   && ~~~~ \sigma \underline B^{[0,t/2]} \ge  -z-\eta , \sigma B_t  +m_t \in \d r\Big\}. \label{eq:P(B)E3}
\end{eqnarray}

\noindent  By equation (\ref{joint-min-Bt}), we have for any $y> -\frac32 \log t -z-\eta$, and $t\ge 1$
$$
\P_{y} \Big\{ \sigma \underline B^{[0,t/2]} \ge -m_t -z-\eta,\sigma B_{t/2}+m_t \in \d r \Big\}
\le c (y+m_t +z+\eta)(r+z+\eta) t^{-3/2}\d r.
$$
Applying the Markov property at time $t/2$ in (\ref{eq:P(B)E3}), we get for $t\ge 1$
\begin{eqnarray*}\nonumber
&& \P(\widetilde{B},\d r) \\ 
&\le&
 c (r+z+\eta) t^{-3/2}\E\Big[ {\bf 1}_{\{\exists s\in [x,t/2]: \sigma B_s \notin [s^{1/3},s^{2/3}], \sigma \underline B^{[0,t/2]} \ge  -z-\eta\}} (\sigma B_{t/2} +m_t+z+\eta)\Big]\d r\\
&\le& 
 c (r+z+\eta) t^{-3/2}\left({m_t\over \sqrt{t}} + \E\Big[ {\bf 1}_{\{\exists s\in [x,t/2]: \sigma B_s \notin [s^{1/3},s^{2/3}], \sigma \underline B^{[0,t/2]} \ge  -z-\eta\}} (\sigma B_{t/2} +z+\eta)\Big] \right)\d r.
 \end{eqnarray*}
 
 \noindent On the other hand,
 \begin{eqnarray*}
&& \E\Big[ {\bf 1}_{\{\exists s\in [x,t/2]: \sigma B_s \notin [s^{1/3},s^{2/3}], \sigma \underline B^{[0,t/2]} \ge  -z-\eta\}} (\sigma B_{t/2} +z+\eta)\Big] \\
 &=&
 (z+\eta) \P_{z+\eta}(\exists s\ge x: \sigma R_s-z-\eta \notin [s^{1/3},s^{2/3}])
 \end{eqnarray*}
 
 \noindent where, as before, $(R_s,\, s\ge 0)$ is a three-dimensional Bessel process.This implies that
 \begin{eqnarray} \label{P(B,dr)2}
 &&  \P(\widetilde{B},\d r) \\ 
 &\le&  c (r+z+\eta) t^{-3/2}\left({m_t\over \sqrt{t}} + (z+\eta) \P_{z+\eta}(\exists s\ge x: \sigma R_s-z-\eta \notin [s^{1/3},s^{2/3}]) \right)\d r. \nonumber
\end{eqnarray}

\noindent  We get that
$$
\P(\widetilde{B}) \le  c (z+2\eta)^2 t^{-3/2}\left({m_t \over \sqrt{t}} +(z+\eta)\P_{z+\eta}(\exists s\ge x: \sigma R_s-z-\eta \notin [s^{1/3},s^{2/3}]) \right)
$$
which is less than $c (z+2\eta)^2 t^{-3/2}\varepsilon $ for $x$ large enough (as $t\to\infty$) and we conclude by (\ref{eq:P(B)=E3}). $\Box$

\bigskip
 
For future reference we now prove the following lemma
which shows that the probability for a Brownian path conditioned to end up near $m_t$ of satisfying event $E_1$ but not $E_2$ or $E_3$ decreases like $1/t.$ Let $\P_{a,b}^{(t)}$ denote the probability under which $B$ is a Brownian bridge from $a$ to $b$ of length $t$. The notation $o_x(1)$ designates an expression depending on $x$ (and also on $r$ and $z$, but independent of $t$) which converges to 0 as $x \to \infty$. We recall that $\underline{B}^{[a,b ]}:=\min_{s\in[a,b]} B_s$.
\begin{lemma}\label{l:small o of 1/t}
Fix $r\in \r$ and $z>0$.  We have
\begin{align*}
\P_{0,{m_t+r \over \sigma}}^{(t)}\Big( &\sigma \underline{B}^{[0,t/2 ]}\ge -z, \, \sigma \underline{B}^{[t/2,t ]}\ge m_t -z, \\ & \qquad \exists s\in[x,t-x]\,: \, \sigma B_s< \min(s^{1/3}, m_t +(t-s)^{1/3}) \Big) = {1\over t} o_x(1)
\end{align*}
in the sense that $\limsup_{t\to\infty} t\P_{0,{m_t+r \over \sigma}}^{(t)}(\ldots)=o_x(1)$.
Furthermore, there exists a constant $c>0$ such that for any $t\ge 1$, $z>0$ and $r\in \r$ such that $|r|\le \sqrt{t}$,
$$
\P_{0,{m_t+r\over \sigma}}^{(t)} \Big( \sigma \underline{B}^{[0,t/2 ]}\ge -z, \, \sigma \underline{B}^{[t/2,t ]}\ge m_t -z \Big)\le {c\over t} z|r+z|.
$$

\end{lemma}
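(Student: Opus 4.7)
I would prove both inequalities by applying the Markov property at time $t/2$, bounding each half via the refined form of (\ref{joint-min-Bt}) that retains the Gaussian factor explicitly, and then converting the resulting unconstrained estimate into a bridge probability by dividing by the density of $\sigma B_t$ at $m_t+r$, which for $|r|\le\sqrt t$ is of order $t^{-1/2}$. Concretely, the bound $\sinh z\le z\ee^{z}$ applied to the joint density in (\ref{joint-min-Bt}) gives, for $u>-z$,
$$\P_0\bigl(\sigma\underline B^{[0,t/2]}>-z,\,\sigma B_{t/2}\in\d u\bigr)\le \frac{c\,z(u+z)}{t^{3/2}}\,\ee^{-u^2/(\sigma^2 t)}\,\d u,$$
and, after translating the barrier to $0$, for $u\ge m_t-z$,
$$\P_{u/\sigma}\bigl(\sigma\underline B^{[0,t/2]}\ge m_t-z,\,\sigma B_{t/2}\in\d(m_t+r)\bigr)\le \frac{c(u-m_t+z)(r+z)}{t^{3/2}}\,\ee^{-(u-m_t-r)^2/(\sigma^2 t)}\,\d(m_t+r).$$

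\textbf{Second inequality.} Multiplying these two estimates via the Markov property and integrating over $u$: completing the square gives $u^2+(u-m_t-r)^2=2\bigl(u-(m_t+r)/2\bigr)^2+(m_t+r)^2/2$, so the $u$-integral is a Gaussian moment of order $t^{3/2}$ (the integrand is polynomial of degree two on the Gaussian scale $\sqrt t$, and the lower cutoff $u\ge m_t-z$ only induces a negligible correction when $|r|\le\sqrt t$). This produces
$$\P_0\bigl(\cdots,\sigma B_t\in\d(m_t+r)\bigr)\le \frac{c\,z(r+z)}{t^{3/2}}\,\ee^{-(m_t+r)^2/(2\sigma^2 t)}\,\d(m_t+r);$$
dividing by the density of $\sigma B_t$ cancels the Gaussian and yields $cz(r+z)/t$. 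When $r+z\le 0$ the event is empty (the endpoint $\sigma B_t=m_t+r$ itself violates $\sigma B_t\ge m_t-z$), so the bound holds as $cz|r+z|/t$.

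\textbf{First inequality.} I would split the extra event into $\mathcal E_1\cup\mathcal E_2$ with $\mathcal E_1=\{\exists s\in[x,t/2]:\sigma B_s<s^{1/3}\}$ and $\mathcal E_2=\{\exists s\in[t/2,t-x]:\sigma B_s<m_t+(t-s)^{1/3}\}$ (for large $t$ the inner $\min$ in the statement simplifies accordingly on each piece). On $\mathcal E_1$ the first Markov factor is precisely the quantity bounded in the derivation of (\ref{P(B,dr)}) in the proof of Proposition \ref{T: prop de X_1}: the $h$-transform identifies $\sigma B+z$ (conditioned to stay non-negative) with a three-dimensional Bessel process started from $z$, under which $\mathcal E_1$ becomes $\{\exists s\in[x,t/2]:\sigma R_s\le z+s^{1/3}\}$. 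Repeating the computation of the previous paragraph with this extra Bessel factor yields
$$\P_{0,(m_t+r)/\sigma}^{(t)}(\cdots\wedge\mathcal E_1)\le \frac{c\,z(r+z)}{t}\,\P_z\bigl(\exists s\ge x:\sigma R_s\le z+s^{1/3}\bigr),$$
and this Bessel probability tends to $0$ as $x\to\infty$ because $R_s$ is almost surely comparable to $\sqrt s$ for large $s$ and therefore eventually exceeds $s^{1/3}$. The case $\mathcal E_2$ is handled analogously after time-reversing the bridge ($\tilde B_s=B_{t-s}$), which swaps the two minimum constraints and reduces the problem to an $\mathcal E_1$-type event on the first half.

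\textbf{Main obstacle.} The key technical point is to retain the Gaussian prefactors $\ee^{-u^2/(\sigma^2 t)}$ and $\ee^{-(u-m_t-r)^2/(\sigma^2 t)}$ throughout; using only the cruder polynomial form $xy/t$ of (\ref{joint-min-Bt}) would make the $u$-integral polynomial-divergent and, more importantly, would fail to produce the Gaussian $\ee^{-(m_t+r)^2/(2\sigma^2 t)}$ that cancels against the density of $\sigma B_t$ at the endpoint, costing a factor of $\sqrt t$ and missing the correct $1/t$ rate.
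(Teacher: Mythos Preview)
Your approach is correct and, for the first inequality, essentially identical to the paper's: both split into the two halves $\mathcal E_1,\mathcal E_2$, invoke the Bessel estimates (\ref{P(B,dr)}) and (\ref{P(B,dr)2}) already derived in the proof of Proposition~\ref{T: prop de X_1} (via time-reversal for $\mathcal E_2$), and convert to a bridge probability by multiplying by $\sigma\sqrt{2\pi t}\,\ee^{(m_t+r)^2/(2\sigma^2 t)}$.

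For the second inequality your route differs from the paper's. The paper does \emph{not} retain both Gaussian factors and complete the square; instead it applies the crude polynomial form of (\ref{joint-min-Bt}) to the second half only, obtaining
\[
\P\bigl(\cdots,\;\sigma B_t\in m_t+\d r\bigr)\le c\,t^{-3/2}(r+z)\,\E\bigl[{\bf 1}_{\{\sigma\underline B^{[0,t/2]}\ge -z\}}(\sigma B_{t/2}+z-m_t)_+\bigr]\d r,
\]
bounds $(\sigma B_{t/2}+z-m_t)_+\le\sigma B_{t/2}+z$, and then uses the exact martingale (i.e.\ $h$-transform) identity $\E[{\bf 1}_{\{\sigma\underline B^{[0,t/2]}\ge -z\}}(\sigma B_{t/2}+z)]=z$. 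The leftover factor $\ee^{(m_t+r)^2/(2\sigma^2 t)}$ from the bridge density is simply bounded by a constant under the hypothesis $|r|\le\sqrt t$ (this is exactly where that hypothesis is used). So your ``main obstacle'' is a red herring: no Gaussian cancellation is needed, and the crude form suffices for one of the two factors. Your completing-the-square computation also works, but it leaves you an extra check that the Gaussian second moment $\int(u+z)(u-m_t+z)\,\ee^{-2(u-c)^2/(\sigma^2 t)}\d u$ is $O(t^{3/2})$ with a constant independent of $z$; this is clear for $z=O(\sqrt t)$, while for larger $z$ the claimed bound $cz|r+z|/t\ge 1$ is trivial. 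The paper's one-line martingale identity sidesteps this uniformity issue entirely.
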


\begin{proof} We have
\begin{align*}
& \P_{0,{m_t +r \over \sigma}}^{(t)}\Big( \sigma \underline{B}^{[0,t/2 ]}\ge -z, \, \sigma \underline{B}^{[t/2,t ]}\ge m_t-z, \\ & \qquad \qquad \exists s\in[x,t-x]\,: \, \sigma B_s< \min(s^{1/3}, m_t +(t-s)^{1/3}) \Big) \\ & \le
\P_{0,{m_t+r \over \sigma}}^{(t)}\Big( \sigma \underline{B}^{[0,t/2 ]}\ge -z, \, \sigma \underline{B}^{[t/2,t ]}\ge m_t -z, \, \exists s\in[x,t/2]\,: \, \sigma B_s< s^{1/3} \Big) \\
&\qquad + \P_{0,{m_t+r \over \sigma}}^{(t)}\Big( \sigma \underline{B}^{[0,t/2 ]}\ge -z, \,  \sigma \underline{B}^{[t/2,t ]}\ge m_t-z, \\ & \qquad \qquad\qquad \exists s\in[t/2, t-x]\,: \, \sigma B_s< m_t +(t-s)^{1/3} \Big) .
\end{align*}
We treat the two terms
on the right-hand side
successively. Using the definition of the Brownian bridge, we observe that, as $t\to\infty$
\begin{align*}
&\P_{0,{m_t+r \over \sigma}}^{(t)}\Big(\sigma \underline{B}^{[0,t/2 ]}\ge -z,\,\sigma \underline{B}^{[t/2,t ]}\ge m_t-z,\, \exists s\in[x,t/2]\,: \, \sigma B_s< s^{1/3} \Big) \\
&= \sigma \sqrt{2\pi t}\, \ee^{ {(m_t +r)^2 \over 2\sigma^2 t} } \lim_{\d r \to 0}  {1\over \d r} \P(\widehat{B},\d r) 
\end{align*}

\noindent with $\P(\widehat{B},\d r)$ defined in (\ref{Bhat}). By equation (\ref{P(B,dr)}), $\P(\widehat{B},\d r) \le ct^{-3/2}(r+z)\P_r( \exists s\in [x,t/2]: \sigma R_s \le s^{1/3})\d r$, where $(R_s,s\ge 0)$ is a three dimensional Bessel process. Hence 
$$
\P_{0,{m_t+r \over \sigma}}^{(t)}\Big(\sigma \underline{B}^{[0,t/2 ]}\ge -z,\,\sigma \underline{B}^{[t/2,t ]}\ge m_t-z,\,  \exists s\in[x,t/2]\,: \, \sigma B_s< s^{1/3} \Big) \sim {1\over t}o_x(1).
$$
Similarly, notice that
\begin{eqnarray*}
&& \P_{0,{m_t+r \over \sigma}}^{(t)}\Big(\sigma \underline{B}^{[0,t/2 ]}\ge -z,\, \sigma \underline{B}^{[t/2,t ]}\ge m_t-z,\,\exists s\in[t/2, t-x]\,: \, \sigma B_s< m_t +(t-s)^{1/3}\Big) \\
&\le&
\sigma \sqrt{2\pi t}\, \ee^{ {(m_t +r)^2 \over 2\sigma^2 t} } \lim_{\d r \to 0}  {1\over \d r}  \P(\widetilde{B},\d r)
\end{eqnarray*}
with $ \P(\widetilde{B},\d r)$ defined in (\ref{Btilde}). Then, equation (\ref{P(B,dr)2}) implies that
$$
\P_{0,{m_t+r \over \sigma}}^{(t)}\Big(\sigma \underline{B}^{[0,t/2 ]}\ge -z,\, \sigma \underline{B}^{[t/2,t ]}\ge m_t-z,\,\exists s\in[t/2, t-x]\,: \, \sigma B_s< m_t +(t-s)^{1/3}\Big) 
$$
is ${1\over t}o_x(1)$, which proves the first assertion. Let us prove the second one. We can suppose that $r+z\ge 0$, since the statement is trivial otherwise. We have that
\begin{eqnarray*}
&& \P_{0,{m_t+r\over \sigma}}^{(t)} \Big( \sigma \underline{B}^{[0,t/2 ]}\ge -z, \, \sigma \underline{B}^{[t/2,t ]}\ge m_t -z \Big)\\
&=&
\sigma \sqrt{2\pi t}\, \ee^{ {(m_t +r)^2 \over 2\sigma^2 t} } \lim_{\d r \to 0}  {1\over \d r} \P( \sigma \underline{B}^{[0,t/2 ]}\ge -z, \, \sigma \underline{B}^{[t/2,t ]}\ge m_t -z,\, \sigma B_t \in m_t+\d r  ) .
\end{eqnarray*}

\noindent By the Markov property at time $t/2$ and equation (\ref{joint-min-Bt}), we see that
\begin{eqnarray*}
&& \P( \sigma \underline{B}^{[0,t/2 ]}\ge -z, \, \sigma \underline{B}^{[t/2,t ]}\ge m_t -z,\, \sigma B_t \in m_t+\d r  )\\
&\le&
c t^{-3/2} (r+z) \E\left[ {\bf 1}_{\{ \sigma \underline{B}^{[0,t/2 ]}\ge -z  \}} (\sigma B_{t/2} +z-m_t)_+\right]\d r
\end{eqnarray*}

\noindent where $y_+$ stands for $\max(y,0)$. We notice as before that $ \E\left[ {\bf 1}_{\{ \sigma \underline{B}^{[0,t/2 ]}\ge -z  \}} (\sigma B_{t/2} +z-m_t)_+\right] \le  \E\left[ {\bf 1}_{\{ \sigma \underline{B}^{[0,t/2 ]}\ge -z  \}} (\sigma B_{t/2} +z)\right]=z$, which completes the proof. 
\end{proof}

%%%%%%%%%%%%%%%%%%%%%%%%%%%%%%%%%%%%%%%%%%%%%%%%
%%%%%%%%%%%%%%%%%%%%%%%%%%%%%%%%%%%%%%%%%%%%%%%%

\section{The decoration point measure $\mathscr{Q}$: Proof of Theorem \ref{T: structure de Q}}
\label{S: preuve de 2.3}

This section is devoted to the study of the decoration point measure $\mathscr{Q}$. 
%jb %In particular, we prove Theorems \ref{T: path of X_1} and \ref{T: structure de Q}.

\noindent {\it Proof of Theorem \ref{T: structure de Q}.} Recall that $X_{i,t}(s)$ is the position at time $s \in [0,t]$ of the ancestor of $X_i(t)$ and that we have defined 
$$
Y_t(s) :=X_{1,t}(s)-X_1(t).
$$ 
Let $\zeta>0$, and let $f := {\bf 1}_{[0, \, \zeta]}$. Let $t\ge \zeta$. Let
%\footnote{Strictly speaking, one should write $L_j(t, \, \zeta, \, (A_j))$ instead of $L_j(t, \, \zeta)$.}
\begin{eqnarray*}
    L_j(t, \, \zeta)
 &:=&\int_{A_j} \d \mathscr{Q}(t, \, \zeta)     \\
 &=&\sum_{i: \, t-\tau_i(t) \le \zeta} \int_{  A_j } \d \mathscr{N}_i(t) ,
\qquad 1\le j\le n.
\end{eqnarray*}

\noindent Let $F_1: \, C(\r_+, \, \r) \to \r_+$  be a bounded continuous function and $F_2:={\bf 1}_{[\eta_1,\eta_2]}$ for some $\eta_2>\eta_1$. Fix $x>0$ and let
\begin{equation}\label{def:as}
a_s:=
\begin{cases}
-x & {\rm if} \, s\in[0,t/2],\\
m_t - x & {\rm if} \, s\in(t/2,t].
\end{cases}
\end{equation}

\noindent We define for any function $X:[0,t]\to\r$, the event
\begin{eqnarray*}
 A(X)
&:=&\{X(s)\ge a_s\, \forall \, s\in[0,t-\zeta]\}\cap\{X(s)-X(t) \ge -x,\,\forall \, s\in [t-\zeta,t]\}\cap\\
&&\hskip-15pt \cap\{X(t-\zeta)-X(t)\in (\zeta^{1/3},\zeta^{2/3})\} \cap \{ \inf\{ s: X(t-s) = \min_{u \in [0,t/2]}X(t-u) \} \le x \}.
\end{eqnarray*}
We easily check that Proposition \ref{T: prop de X_1} implies that $\{X_{1}(t)-m_t\in [\eta_1,\eta_2]\}\cap (A(X_{1,t}))^\complement$ is of probability arbitrary close to $0$ when $x$ and $\zeta$ are large enough. Therefore, we fix $x$ large and we work on the event $A(X_{1,t})$ and we will let $t\to\infty$ then $\zeta\to\infty$ then $x\to\infty$. By (\ref{caracterisation-epinale}), for $t\ge \zeta$:
\begin{eqnarray*}
&& \E\Big\{ {\bf 1}_{A(X_{1,t})} \, F_1(Y_t(s), \, s\in [0, \, \zeta]) \,
    \ee^{-\sum_{j=1}^n \alpha_j L_j(t, \, \zeta)} \,
    F_2(X_1(t)- m_t)     \Big\}
    \\
 &=&\E \Big[ {\bf 1}_{A(\sigma B)} \,
    F_1(\sigma B_t - \sigma B_{t-s}, \, s\in [0, \, \zeta]) \,
    \ee^{\sigma B_t} \,
    \ee^{- 2  \int_0^t [1-\overline{G}_{t-u}^{(f)}(\sigma B_t-\sigma B_u)] \d u}
    F_2(\sigma B_t - m_t) \Big]\\
 &=&\E \Big[ {\bf 1}_{A(\sigma \overline B)} \,
    F_1(\sigma B_s, \, s\in [0, \, \zeta]) \,
    \ee^{\sigma B_t} \,
    \ee^{- 2  \int_0^t [1-\overline{G}_v^{(f)}(\sigma B_v)] \d v}
    F_2(\sigma B_t - m_t) \Big]
    \\
 &=& \int_\r \P \{ B_t \in {\! \d y\over \sigma} \} \, \ee^y \,
    F_2(y- m_t)\,
    \E_{0, {y\over \sigma}}^{(t)} \Big[ {\bf 1}_{A(\sigma \overline B)} \,
    F_1(\sigma B_s, \, s\in [0, \, \zeta]) \,
    \ee^{- 2  \int_0^t [1-\overline{G}_v^{(f)}(\sigma B_v)] \d v} \Big] ,
\end{eqnarray*}

\noindent where $\overline B_s := B_t-B_{t-s}$, $s\in[0,t]$ (so $\overline{B}_t = B_t$), and $\E_{0, {y\over \sigma}}^{(t)}$ denotes expectation with respect to the probability $\P_{0, {y\over \sigma}}^{(t)}$, under which $(B_v, \, v\in [0, \, t])$ is a Brownian bridge of length $t$, starting at $0$ and ending at ${y\over \sigma}$. Since $f = {\bf 1}_{[0, \, \zeta]}$, the function $\overline{G}_r^{(f)}$ in (\ref{Gbarf}) becomes
$$
\overline{G}_r^{(f)}(x)
=
\begin{cases} \E \Big[ \ee^{-\sum_{j=1}^n \alpha_j \int_{x+A_j} \d \mathscr{N}(r)  } \, {\bf 1}_{ \{ \min \mathscr{N}(r) \ge x\} } \Big], &\text{ if $r\in [0, \, \zeta]$}, \cr\cr
1- G_r(x), &\text{ if $r>\zeta$.}
\cr\end{cases}
$$

\noindent So, if we write
\begin{equation}
    G^*_v(x)
:=
1- \E \Big[ \ee^{-\sum_{j=1}^n \alpha_j \int_{x+A_j} \d \mathscr{N}(v)  } \, {\bf 1}_{ \{ \min \mathscr{N}(v) \ge x\} } \Big] ,
    \label{Gv*}
\end{equation}

\noindent then for $t\ge \zeta$, we have $\int_0^t [1-\overline{G}_v^{(f)}(\sigma B_v)] \d v = \int_0^\zeta G^*_v(\sigma B_v)\d v + \int_\zeta^t G_v(\sigma B_v)\d v$, so that by writing\footnote{Attention: $I_{(\ref{ht})}(t, \, \zeta)$ depends also on $y$.}
\begin{equation}
    I_{(\ref{ht})}(t, \, \zeta)
    :=
    t\, \E_{0,{y\over \sigma}}^{(t)} \Big[ {\bf 1}_{A(\sigma \overline B)} \,
    F_1(\sigma B_s, \, s\in [0, \, \zeta]) \,
    \ee^{- 2  \int_0^\zeta G^*_v(\sigma B_v)\d v - 2   \int_\zeta^t G_v(\sigma B_v)\d v}
    \Big] ,
    \label{ht}
\end{equation}

\noindent we have
\begin{eqnarray*}
 &&\E\Big\{ {\bf 1}_{A(X_{1,t})} \, F_1(Y_t(s), \, s\in [0, \, \zeta]) \,
    \ee^{-\sum_{j=1}^n \alpha_j L_j(t, \, \zeta)} \,
    F_2(X_1(t)-m_t)
    \Big\}
    \\
 &=&{1\over t}\int_\r \P \{ B_t \in {\! \d y\over \sigma} \} \, \ee^y \,
    F_2(y-m_t)\, I_{(\ref{ht})}(t, \, \zeta)
    \\
 &=& {1\over t^{3/2}}\int_\r {\ee^{y - {y^2\over 2 \sigma^2 t}} \over \sigmaÊ(2\pi )^{1/2}} \,
    F_2(y-m_t)\, I_{(\ref{ht})}(t, \, \zeta) \d y .
\end{eqnarray*}

\noindent Let $y:= z+m_t$. Since $F_2:={\bf 1}_{[\eta_1,\eta_2]}$, we have when $t\to \infty$, $\ee^{y - {y^2\over 2 \sigma^2 t}} \sim \ee^y = t^{3/2}\ee^{C_B}\ee^z$ where the numerical constant $C_B$ is in (\ref{bramson-constant}). Therefore, for $t\to \infty$,
\begin{eqnarray}
 &&\E\Big\{ {\bf 1}_{A(X_{1,t})} \, F_1(Y_t(s), \, s\in [0, \, \zeta]) \,
    \ee^{-\sum_{j=1}^n \alpha_j L_j(t, \, \zeta)} \,
    F_2(X_1(t)-m_t)
    \Big\}
    \nonumber
    \\
 &\sim&  \ee^{C_B}\int_{\eta_1}^{\eta_2} {\ee^z \over \sigmaÊ(2\pi)^{1/2}} \,
       I_{(\ref{ht})}(t, \, \zeta) \d z .
    \label{P(E)}
\end{eqnarray}

\noindent We need to treat $I_{(\ref{ht})}(t, \, \zeta)$ when $z\in[\eta_1,\eta_2]$.
As we will let $\zeta \to \infty$ before making $x\to\infty$, we can suppose $\zeta>x$. Let us write $\theta=\theta_B(\zeta) := \inf\{s\in [0,\zeta] : B_s = \max_{u \in [0,\zeta]} B_u \}.$ 
Applying the Markov property at time $v=\zeta$ (for the Brownian bridge which is an inhomogeneous Markov process, see Fact \ref{f:Markov bridge}), gives
\begin{eqnarray*}
 &&I_{(\ref{ht})}(t, \, \zeta)
    \\
 &=& t \int_{-\zeta^{2/3}}^{-\zeta^{1/3}}
    \E \Big[ {\bf 1}_{\{\max_{[0,\zeta]}\sigma B_s\le x,
    \, \theta \le x\}}
    F_1(\sigma B_s, \, s\in [0, \, \zeta]) \, \ee^{- 2  \int_0^\zeta G^*_v (\sigma B_v) \d v} \,
    {\bf 1}_{\{ \sigma B_\zeta \in \! \d w\} } \Big] \times
    \\
 && \qquad \times  \left(t\over t-\zeta\right)^{1/2} \frac{\ee^{-{(y-w)^2\over 2\sigma^2 (t-\zeta)}}}{\ee^{-{y^2\over 2 \sigma^2 t}}} \E_{0,{y- 
 w\over \sigma}}^{(t-\zeta)} \Big[{\bf 1}_{\{ \sigma \overline B_s \ge a_s,\, s\in[0,t-\zeta] \}} \ee^{- 2  \int_0^{t-\zeta} G_{v+\zeta}(w + \sigma B_v) \d v} \Big]
\end{eqnarray*}

\noindent where now $\overline B_s:=B_{t-\zeta}-B_{t-\zeta-s}$. We recall that we look at the case $z=y-m_t \in[\eta_1,\eta_2]$. It yields that ${(y-w)^2\over t-\zeta}$ and ${y^2\over t}$ are $o_t(1)$, so that, for $t\to \infty$,
\begin{eqnarray}\nonumber
 &&I_{(\ref{ht})}(t, \, \zeta)
    \\
 &\sim& t \int_{-\zeta^{2/3}}^{-\zeta^{1/3}}
    \E \Big[{\bf 1}_{\{\max_{[0,\zeta]} \sigma B_s\le x, \, \theta \le x\}}
    F_1(\sigma B_s, \, s\in [0, \, \zeta]) \, \ee^{- 2  \int_0^\zeta G^*_v (\sigma B_v) \d v} \,
    {\bf 1}_{\{ \sigma B_\zeta \in \! \d w\} } \Big] \times
    \nonumber
    \\
 && \qquad \times  \E_{0,{y- w\over \sigma}}^{(t-\zeta)} \Big[{\bf 1}_{\{\sigma \overline B_s \ge a_s, \, s\in[0,t-\zeta] \}} \ee^{- 2  \int_0^{t-\zeta} G_{v+\zeta}(w + \sigma B_v) \d v} \Big].
 \label{lim-I[0,t-zeta]}
\end{eqnarray}

\noindent At this stage, we need a couple of lemmas, stated as follows. We postpone the proof of these lemmas, and finish the proof of Theorem \ref{T: structure de Q}. Recalling the family of processes $\Gamma^{(b)}$ from (\ref{U}), we write
\begin{equation}
     \varphi_x(z)
     :=
   \sigma  \int_0^{x/\sigma}
    \E \Big[ \ee^{ -2  \int_0^\infty F_W({z}+\sigma \Gamma^{(b)}_v)\d v}\Big]
    \d b ,
    \qquad z\in \r,
    \label{phi}
\end{equation}
\noindent where $F_W$ is the distribution function of the random variable $W$ introduced in (\ref{bramson}).

\medskip

\begin{lemma}
\label{l:g2}
 Let $z\in \r$, $y:= z + m_t$, $x>0$ and $(a_s,s\in[0,t])$ defined in (\ref{def:as}). There exists a function $f:\r\times\r_+\to\r$ such that for any $w<x+z$ and $\zeta>0$ $$
 \lim_{t\to \infty} \,
 t \, \E_{0,{y-w\over \sigma}}^{(t)}
 \Big[{\bf 1}_{\{ \sigma(B_t - B_{t-s}) \ge a_s,s\in[0,t] \}}
 \ee^{- 2  \int_0^t G_{\zeta+v}(w+ \sigma B_v) \d v}
 \Big]
 = \varphi_x(z)f(w,\zeta).
 $$
 \noindent Moreover $f(w,\zeta)\sim |w|$ as $w\to-\infty$ and uniformly in $\zeta>0$.
\end{lemma}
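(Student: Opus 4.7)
The plan is to first time-reverse the bridge. Set $U_s := \sigma(B_t - B_{t-s})$; under $\P^{(t)}_{0,(y-w)/\sigma}$, $U$ is a variance-$\sigma^2$ Brownian bridge from $0$ to $y-w$. The substitution $v=t-s$ rewrites the exponent as $\int_0^t G_{\zeta+t-s}(y-U_s)\,\d s$ and the event as $\{U_s\ge a_s,\,\forall s\}$, which forces $U$ to stay above $-x$ on $[0,t/2]$ and above $m_t-x$ on $[t/2,t]$. By Bramson, $G_{\zeta+t-s}(y-U_s)\to F_W(z-U_s)$ pointwise for bounded $s$ as $t\to\infty$, since $m_t-m_{\zeta+t-s}\to 0$.

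\noindent\textbf{Decomposition at the minimum.} The minimum of $U$ on $[0,t]$ is attained on $[0,t/2]$ and lies in $[-x,0]$; parameterise it as $-\sigma b$ with $b\in[0,x/\sigma]$, attained at time $\tau$. The Williams/Denisov decomposition (Section~\ref{s:rappels-Denisov-Imhof}) writes $U$, conditionally on $(b,\tau)$, as the concatenation of a reversed Brownian meander on $[0,\tau]$ and an independent Bessel-$3$ bridge from $-\sigma b$ to $y-w$ over time $t-\tau$. As $t\to\infty$ with $b$ fixed, the local picture $(U_s)_{s\ge 0}$ converges to $(-\sigma\Gamma^{(b)}_s)_{s\ge 0}$ from (\ref{U}); the macroscopic Bessel-$3$ bridge normalisation contributes a factor $\asymp (y-w)/t$ by (\ref{joint-min-Bt}), which the outer $t$ absorbs. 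Integrating over $b\in(0,x/\sigma)$ with the Denisov density $\sigma\,\d b$ then reconstructs exactly the integral (\ref{phi}) defining $\varphi_x(z)$.

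\noindent\textbf{Splitting the integral and factorisation.} Next I would split $\int_0^t = \int_0^A + \int_A^{t-A} + \int_{t-A}^t$ for a large constant $A$. The first piece converges to $\int_0^A F_W(z+\sigma\Gamma^{(b)}_v)\,\d v$, which tends to $\int_0^\infty F_W(z+\sigma\Gamma^{(b)}_v)\,\d v$ as $A\to\infty$ (finite because $\sigma\Gamma^{(b)}_v\to-\infty$ and $F_W(x)\sim C|x|\ee^{x}$ by (\ref{lalley-sellke})). For the third piece, the time-reversal $V_r := (y-w)-U_{t-r}$ gives $y-U_{t-r}=w+\sigma V_r$, so this piece equals $\int_0^A G_{\zeta+r}(w+\sigma V_r)\,\d r$, with $V$ a bridge of the same law as $U$ whose near-$0$ marginal converges to a variance-$\sigma^2$ Brownian motion, hence depends in the limit only on $w$ and $\zeta$. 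The middle piece is shown negligible: on $[A,t/2]$ the envelope combined with Bessel-bridge tail estimates (cf.\ Proposition~\ref{T: prop de X_1}) forces $U_s$ to grow large enough that $F_W(z-U_s)$ contributes only $o_A(1)$, and a dual estimate controls $[t/2,t-A]$. Asymptotic independence at $t/2$ via the bridge Markov property (Fact~\ref{f:Markov bridge}) then factorises the exponential in the limit, producing the product $\varphi_x(z)f(w,\zeta)$ with $f(w,\zeta)$ the limiting near-$t$ factor.

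\noindent\textbf{Asymptotics of $f$ and main obstacle.} For $f(w,\zeta)\sim|w|$ as $w\to-\infty$ uniformly in $\zeta>0$, $f$ factors further into an endpoint weight $\asymp y-w\sim|w|$ from the Bessel-bridge density (\ref{joint-min-Bt}) times $\E[\ee^{-2\int_0^\infty G_{\zeta+r}(w+\sigma B_r)\,\d r}]$; the latter tends to $1$ uniformly in $\zeta$ because the integrand, bounded by $F_W(w+\sigma B_r - m_{\zeta+r})+o(1)$ and hence (using $F_W(x)\sim C|x|\ee^x$ and $m_{\zeta+r}\ge m_\zeta>0$) by an integrable function going to $0$ pointwise as $w\to-\infty$, kills the exponent. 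I expect the main technical hurdle to be the middle-range negligibility estimate together with the asymptotic independence of the two boundary contributions, both uniform in $t$, which will require the Bessel-bridge convergence machinery of Sections~\ref{s:rappels-Denisov-Imhof}--\ref{s:1er-appli-Denisov} and explicit error bounds on Bramson's asymptotic $G_r(x+m_r)\to F_W(x)$.
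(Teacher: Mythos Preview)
Your overall strategy matches the paper's: time-reverse the bridge, split the time interval into near-$0$, middle, and near-$t$ pieces, kill the middle via the tube estimate (Lemma~\ref{l:small o of 1/t}), and extract $\varphi_x(z)$ and $f(w,\zeta)$ from the two ends via Bessel-type arguments. The paper does exactly this, but by applying the Markov property at fixed times $M$ and $t-M$ (Fact~\ref{f:Markov bridge}) rather than decomposing the bridge at its minimum; the $\Gamma^{(b)}$ structure only appears later, inside Lemma~\ref{l:bessel decomposition}, after an $h$-transform of the near-$0$ piece to a Bessel-$3$ process.

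There is, however, a genuine gap in your factor bookkeeping. You claim the post-minimum Bessel-$3$ bridge contributes a normalisation $\asymp (y-w)/t$ that the outer $t$ absorbs, and later that the ``endpoint weight $\asymp y-w\sim|w|$'' gives the asymptotic of $f$. But $y-w=z+m_t-w\sim m_t\to\infty$ as $t\to\infty$ for fixed $w$, so $t\cdot(y-w)/t\to\infty$, not a finite limit; and writing $y-w\sim|w|$ conflates the $t\to\infty$ and $w\to-\infty$ limits. The correct accounting is more delicate: the middle piece carries a $t^{-3/2}$ decay with \emph{two} boundary factors --- one proportional to $x+\sigma B_M$ (absorbed into the near-$0$ expectation, yielding $\varphi_x(z)$ via Lemma~\ref{l:bessel decomposition}) and one proportional to the near-$t$ displacement $r=\sigma B_{t-M}-a_t$ (which becomes the $(x+z-w)$ in $f$ after a second $h$-transform). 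This is precisely the content of the paper's Lemma~\ref{l:conv-kappa}, which your sketch has no analogue of; without it, neither the finiteness of the limit nor the product form $\varphi_x(z)f(w,\zeta)$ nor the correct source of the $|w|$ factor can be established. Your argument that the exponential tends to $1$ as $w\to-\infty$ is fine and matches the paper's, but it only kicks in once $f(w,\zeta)$ has been correctly identified as $(x+z-w)\,\E_{(x+z-w)/\sigma}[\ee^{-2\int_0^\infty G_{\zeta+v}(x+z-\sigma R_v)\,\d v}]$.
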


\begin{lemma}
\label{l:queue}
 Let $\Gamma^{(b)}$ be the family of processes defined in $(\ref{U})$, and let $T_b := \inf\{ t\ge 0: \, B_t=b\}$.
 We have
 \begin{eqnarray*}
  &&\lim_{\zeta\to \infty}
     \E \Big[{\bf 1}_{\{\max_{[0,\zeta]} \sigma B_s\le x,\, \sigma B_\zeta\in (-\zeta^{2/3},-\zeta^{1/3}), \theta \le x\}}
     F_1(\sigma B_s, \, s\in [0, \, \zeta]) \,
     \ee^{- 2  \int_0^\zeta G^*_v (\sigma B_v) \d v} \, |B_\zeta|
     \Big]
     \\
  &=& \int_0^{x/\sigma}
     \E \Big[ F_1(\sigma \Gamma^{(b)}_s, \, s\ge 0)
     \ee^{-2  \int_0^\infty G^*_v(\sigma \Gamma^{(b)}_v) \d v}
     {\bf 1}_{\{ T_b \le x\} } \Big] \d b.
 \end{eqnarray*}

\end{lemma}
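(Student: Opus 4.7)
The plan is to decompose the Brownian path on $[0,\zeta]$ at the unique time $\theta$ at which it attains its maximum $M$, using Williams' path decomposition, and to identify the post-maximum piece with a three-dimensional Bessel process via Imhof's theorem. The value $M = b$ becomes the integration variable $b \in [0, x/\sigma]$ on the right-hand side, while the restriction $\theta \le x$ translates to $T_b \le x$.

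Conditionally on $(M, \theta) = (b, s)$, Williams' decomposition makes the pre-maximum piece $(B_u)_{0 \le u \le s}$ and the reflected post-maximum piece $C_u := b - B_{s+u}$, $u \in [0, \zeta - s]$, independent, with the former distributed as a Brownian motion conditioned on $T_b = s$ and the latter as a Brownian motion from $0$ conditioned to remain strictly positive on $(0, \zeta - s]$. Integrating the pre-max factor over $s$ against the hitting-time density $f_{T_b}(s)\,ds$ collapses to the free Brownian expectation $\E[(\cdot)\,\mathbf{1}_{T_b \le x}]$, matching the $[0, T_b]$ segment of $\Gamma^{(b)}$. For the post-maximum piece, the condition $\sigma B_\zeta \in (-\zeta^{2/3}, -\zeta^{1/3})$ becomes $C_{\zeta-s} \in (b + \zeta^{1/3}/\sigma, b + \zeta^{2/3}/\sigma)$ and the weight $|B_\zeta|$ becomes $C_{\zeta-s} - b$. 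The crux is the Imhof-type identity
\begin{equation*}
\E\bigl[H(B_u, u \le T)\,B_T\,\mathbf{1}_{\min_{(0,T]} B > 0}\bigr] = \E\bigl[H(R_u, u \le T)\bigr],
\end{equation*}
valid for a standard Brownian motion $B$ from $0$ and a three-dimensional Bessel process $R$ from $0$, which, upon writing $C_{\zeta-s} - b = C_{\zeta-s}(1 - b/C_{\zeta-s})$ and absorbing the discrepancy factor $1 - b/C_{\zeta-s} = 1 + O(\sigma b/\zeta^{1/3}) \to 1$, identifies the weighted positive Brownian motion with a three-dimensional Bessel process $R$.

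Passing to the limit $\zeta \to \infty$, the indicator $\mathbf{1}_{R_{\zeta-s} \in (b + \zeta^{1/3}/\sigma, b + \zeta^{2/3}/\sigma)}$ converges to $1$ almost surely (since $R_t \sim c\sqrt{t}$ as $t \to \infty$), and the integral $\int_0^{\zeta-s} G^*_{s+u}(\sigma(b - R_u))\,du$ extends to $\int_0^\infty$ by dominated convergence, using that $\sigma(b - R_u) \to -\infty$ a.s.\ and that $G^*_v(y) \to 0$ as $y \to -\infty$ (via $G^*_v(y) \le \P(\min \mathscr{N}(v) < y) + $ terms bounded by the branching Brownian motion minimum tail). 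Concatenating the pre-max Brownian motion with $b - R$ reconstructs exactly the process $\Gamma^{(b)}$ of (\ref{U}), and the resulting expectation is the integrand on the right-hand side. The main obstacle is the uniform dominated-convergence control: bounding $\ee^{-2\int_0^\infty G^*_v(\sigma \Gamma^{(b)}_v)\,dv}$ and its tail uniformly in $b \in [0, x/\sigma]$ and $\zeta$, which rests on the BBM minimum tail estimates gathered in Section \ref{s:proof-prop2.5}.
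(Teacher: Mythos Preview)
Your strategy is the same as the paper's: decompose $B$ on $[0,\zeta]$ at its maximum (Denisov/Williams), identify the post-maximum piece with a three-dimensional Bessel process via Imhof, and collapse the pre-maximum integration over $\theta=s$ into the unconditioned Brownian expectation with $\mathbf 1_{\{T_b\le x\}}$. So the architecture is right.

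There is, however, a genuine gap in your central technical step. The displayed ``Imhof-type identity''
\[
\E\bigl[H(B_u, u \le T)\,B_T\,\mathbf{1}_{\min_{(0,T]} B > 0}\bigr] = \E\bigl[H(R_u, u \le T)\bigr]
\]
for \emph{standard Brownian motion $B$ from $0$} is false as written: the event $\{\min_{(0,T]}B>0\}$ has probability zero, so the left-hand side vanishes identically. What you need is the Imhof relation for the Brownian \emph{meander} (which is indeed what the post-maximum piece is, after Denisov): for the standard meander $\mathfrak m$ on $[0,1]$ and Bessel($3$) process $R$ from $0$,
\[
\E\bigl[\,\mathfrak m_1\,F(\mathfrak m_s,s\in[0,1])\bigr]=\sqrt{\pi/2}\;\E\bigl[F(R_s,s\in[0,1])\bigr],
\]
and then use scaling to transport this to $[0,\zeta-s]$. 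Note the constant $\sqrt{\pi/2}$, which you omit.

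This constant is not cosmetic: it has to cancel precisely against the other densities you are implicitly carrying. Conditioning on $(M,\theta)=(b,s)$, the joint density is
\[
f_{M,\theta}(b,s)=\frac{1}{\pi\sqrt{s(\zeta-s)}}\cdot\frac{b}{s}\,\ee^{-b^2/(2s)}
=\sqrt{\tfrac{2}{\pi}}\,\frac{1}{\sqrt{\zeta-s}}\,f_{T_b}(s),
\]
so integrating the pre-max factor against $f_{T_b}(s)\,\d s$ as you claim is off by the factor $\sqrt{2/\pi}\,/\sqrt{\zeta-s}$. On the other side, the weight $|B_\zeta|=C_{\zeta-s}-b$ contributes a factor $\sqrt{\zeta-s}$ through $\mathfrak m_1$, and Imhof contributes $\sqrt{\pi/2}$. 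Only once you track all three does the product collapse to $f_{T_b}(s)\,\d s\,\d b$ and yield the right-hand side with the correct normalization. In the paper this bookkeeping is done explicitly via Lemma~\ref{l:meandre=>Bessel} and the change of variables $b=a^{1/2}r$; in your sketch it is absent, and the identity you invoke to replace it is literally $0=0$.
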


\medskip

\begin{remark}\label{rmk 6.3}
It is possible, with some extra work, to obtain the following identity.
\label{l:phi}
 Let $\varphi(z):=\lim_{x\to\infty} \varphi_x(z)$ be the limit of $(\ref{phi})$. Then for any $z\in \r$,
 $$
 \varphi(z)
 =
 {\sqrt{2 \pi}  \over c_1} \ee^{-(z+C_B)} \,
 f_{W}({z}),
 $$
 where $C_B$ is the constant in $(\ref{bramson-constant})$,
 $W$ the random variable in $(\ref{bramson})$,
 $f_{W}$ the density function of $W$, and
 $$
     c_1
     :=
     \int_0^\infty
     \E \Big[ \ee^{ -2  \int_0^\infty G_v(\sigma \Gamma^{(b)}_v)\d v}
     \Big] \d b ,
 $$
 with $\Gamma^{(b)}$ as defined in $(\ref{U})$. The appearance of $f_W$ here is due to the fact that standard arguments
 in the study of
 parabolic p.d.e.'s show that the density of $X_1(t)-m_t$ converges to that of $W.$ More precisely, using the classical interior parabolic a priori estimate \cite{friedman},  it is possible to show that $v(t,\cdot) \equiv u(t,m_t+\cdot)$ converges to $w(\cdot)$ in locally $C^2(\r)$ topology.
%  which implies that $v_x$ converges to $w_x$ locally uniformly in space.
% xxxx On a enleve la derniere demi-phrase car $v_x$ et $w_x$ ne sont apparemment pas definies. 
 %jb changé la remarque. Je ne sais pas trop ce que tout ca veut dire mais c'est dans la note que mon pere nous a envoyée.
\end{remark}

\medskip
We now continue with the proof of Theorem \ref{T: structure de Q}. Let us go back to (\ref{lim-I[0,t-zeta]}). To apply Lemma \ref{l:g2}, we want to use dominated convergence. First, fix $\zeta>0$. Notice that
\begin{eqnarray*}
\E_{0,{y-
w\over \sigma}}^{(t-\zeta)} \Big[{\bf 1}_{\{ \sigma \overline B_s \ge a_s,s\in[0,t-\zeta] \}} \ee^{- 2  \int_0^{t-\zeta} G_{v+\zeta}(w + \sigma B_v) \d v} \Big]
&\le&
\P_{0,{y-
w\over \sigma}}^{(t-\zeta)} \Big(\sigma \overline B_s \ge a_s,s\in[0,t-\zeta]\Big)\\
&=&
\P_{0,{y-w\over \sigma}}^{(t-\zeta)} \Big( \sigma B_s \ge a_s,s\in[0,t-\zeta]\Big),
\end{eqnarray*}

\noindent the last identity being a consequence of the fact that $(\overline B_s, s\in[0,t-\zeta])$ and $(B_s, s\in[0,t-\zeta])$ have the same distribution under $\P_{0,{y-w\over \sigma}}^{(t-\zeta)}$. Using Lemma \ref{l:small o of 1/t} the last probability is smaller than ${c\over t-\zeta} x|z-w+x|$ for some constant $c>0$. Hence, we have for $t>2\zeta$
$$
t\,\E_{0,{y-
w \over \sigma}}^{(t-\zeta)} \Big[{\bf 1}_{\{\sigma  \overline B_s \ge a_s,s\in[0,t-\zeta] \}} \ee^{- 2  \int_0^{t-\zeta} G_{v+\zeta}(w + \sigma B_v) \d v} \Big]
\le
{c\over 2} \, x|z-w+x|.
$$

\noindent We check that
\begin{eqnarray*}
&& \int_{-\zeta^{2/3}}^{-\zeta^{1/3}} \E \Big[{\bf 1}_{\{\max_{[0,\zeta]} \sigma B_s\le x\}}
    F_1(\sigma B_s, \, s\in [0, \, \zeta]) \, \ee^{- 2  \int_0^\zeta G^*_v (\sigma B_v) \d v} \,
    {\bf 1}_{\{ \sigma B_\zeta \in \! \d w\} } \Big] |z-w+x|\\
& \le&  |\!| F_1|\!|_\infty \, \E [ \, |z - \sigma B_\zeta + x| \,
    ]
\end{eqnarray*}
which is finite. Hence, we can apply the dominated convergence, to see that
\begin{eqnarray*}
    \lim_{t\to\infty} I_{(\ref{ht})}(t, \, \zeta)
 &=& \varphi_x(z)\E \Big[
    {\bf 1}_{\{\max_{[0,\zeta]} \sigma B_s\le x, \,
    \sigma B_\zeta\in (-\zeta^{2/3},-\zeta^{1/3}), \, \theta \le x \}}
    \times
    \\
 &&\qquad\qquad\times
    F_1(\sigma B_s, \, s\in [0, \, \zeta]) \,
    \ee^{- 2  \int_0^\zeta G^*_v (\sigma B_v) \d v} \,
    f(\sigma B_\zeta,\zeta) \Big].
\end{eqnarray*}

\noindent Since $f(w,\zeta)\sim |w|$ when $w\to-\infty$ and uniformly in $\zeta>0$, we have as $\zeta\to\infty$,
\begin{eqnarray*}
    \lim_{t\to\infty} I_{(\ref{ht})}(t, \, \zeta)
 &\sim& \varphi_x(z)
    \E \Big[
    {\bf 1}_{\{\max_{[0,\zeta]} \sigma B_s\le x, \,
    \sigma B_\zeta \in (-\zeta^{2/3},-\zeta^{1/3}), \, \theta \le x\}}
    \times
    \\
 &&\qquad\qquad\times
    F_1(\sigma B_s, \, s\in [0, \, \zeta]) \,
    \ee^{- 2  \int_0^\zeta G^*_v (\sigma B_v) \d v} \,
    \sigma |B_\zeta| \Big] ,
\end{eqnarray*}

\noindent which, in view of Lemma \ref{l:queue}, gives that
$$
\lim_{\zeta\to\infty }\lim_{t\to \infty} I_{(\ref{ht})}(t, \, \zeta)
=
\varphi_x(z) \sigma\, \int_0^{x/\sigma}
\E \Big[ F_1(\sigma \Gamma^{(b)}_s, \, s\ge 0) \,
\ee^{-2  \int_0^\infty G^*_v(\sigma \Gamma^{(b)}_v) \d v}
{\bf 1}_{\{ T_b \le x\} } \Big] \d b.
$$

\noindent Going back to (\ref{P(E)}), this tells that
\begin{eqnarray*}
 &&\lim_{\zeta\to \infty} \lim_{t\to \infty}
    \E\Big\{ {\bf 1}_{A(X_{1,t})} \, F_1(Y_t(s), \, s\in [0, \, \zeta]) \,
    \ee^{-\sum_{j=1}^n \alpha_j L_j(t, \, \zeta)} \,
    F_2(X_1(t)- m_t)
    \Big\}
    \\
 &=&{\ee^{C_B}\over (2\pi)^{1/2}}
    \Big( \int_{\eta_1}^{\eta_2} \varphi_x(z) \ee^z \d z\Big)
    \Big( \int_0^{x/\sigma} \E \Big[ F_1(\sigma \Gamma^{(b)}_s, \, s\ge 0)\,
    \ee^{-2  \int_0^\infty G^*_v(\sigma \Gamma^{(b)}_v) \d v}
    {\bf 1}_{\{ T_b \le x\} } \Big] \d b\Big).
\end{eqnarray*}

\noindent Letting $x\to \infty$ yields that $\{(Y_t(s \in [0,t]) ; \sQ(t,\zeta)\}$ converges in distribution to $\{(Y(s), s\ge 0) ; \sQ\}$, that $X_{1}(t)-m_t$ converges in distribution, necessarily to $W$ (by (\ref{bramson})), and that $\{(Y_t(s \in [0,t]) ; \sQ(t,\zeta)\}$ and $X_{1}(t)-m_t$ are asymptotically independent. Theorem \ref{T: structure de Q} is proved.\hfill $\Box$

\bigskip

We observe that by letting $x \to \infty$ the last identity proves that 
\begin{equation}\label{E:c1}
\int_0^\infty \E \Big[ 
    \ee^{-2  \int_0^\infty G^*_v(\sigma \Gamma^{(b)}_v) \d v} \Big] \d b <\infty , \ \  \int_{-\infty}^{\infty}  \int_0^\infty 
    \E \Big[ \ee^{ -2  \int_0^\infty F_W({z}+\sigma \Gamma^{(b)}_v)\d v}\Big]   \ee^z \d b \d z<\infty.
\end{equation}     

\bigskip

It remains to check Lemmas \ref{l:g2} and \ref{l:queue}.
Their proof relies on some well known path decomposition results recalled in Section \ref{s:rappels-Denisov-Imhof}. Lemmas \ref{l:g2} and \ref{l:queue} are proved in Sections \ref{s:proof-lemma7.1} and \ref{s:1er-appli-Denisov}, respectively.

\medskip

Before proceeding with this program, observe that the arguments used above also yield the following Laplace transform characterization of $\sQ$.
For any $n \in \N$, $(\alpha_1, \ldots, \alpha_n) \in \r_+^n$ and $A_1,\ldots, A_n$ a collection of Borel subsets of $\r_+$ and $\zeta>0$  define
$$
I_\zeta(t) := \E\Big\{
    \exp\Big( -
    \sum_i
   \mathbf{1}_{\{ t-\tau_i(t) \le \zeta\} } \,
    \sum_{j=1}^n \alpha_j\, \int_{ A_j} \d \mathscr{N}_i(t) 
    \Big)
    \Big\} ,
$$
(i.e., only the particles whose common ancestor with $X_1(t)$ is more recent than $\zeta$ are taken into account). Clearly, the functional $I_\zeta(t)$ characterizes the law of
$\sQ(t,\, \zeta)$.

Then, for all $n$ and all bounded Borel sets $A_1$, $\cdots$, $A_n$ of $\r_+$, the Laplace transform of the distribution of the random vector $(\mathscr{Q}(A_1), \cdots , \mathscr{Q}(A_n))$ is given by:
$\forall \alpha_j \ge 0$ (for $1\le j\le n$),
\begin{eqnarray}
    \E\Big\{ \ee^{-\sum_{j=1}^n \alpha_j \mathscr{Q}(A_j)}
    \Big\}
    &=&  \lim_{\zeta \to \infty} \lim_{t \to \infty} I_{\zeta}(t) \nonumber \\&=&
    {\int_0^\infty \E(\ee^{-2  \int_0^\infty
    G_v^*(\sigma \Gamma^{(b)}_v) \d v}) \d b
    \over
    \int_0^\infty \E(\ee^{-2  \int_0^\infty
    G_v (\sigma \Gamma^{(b)}_v) \d v}) \d b} ,
    \label{Q-process}
\end{eqnarray}

\noindent where
\begin{eqnarray*}
   G_v^*(x)
 &:=& 1-
    \E \Big[ \ee^{-\sum_{j=1}^n \alpha_j
    \int_{x+A_j} \d \mathscr{N}(v)  } \,
    {\bf 1}_{ \{ \min \mathscr{N}(v) \ge x\} } \Big] .
\end{eqnarray*}
Observe that the first equality in (\ref{Q-process}) is a consequence of the convergence in distribution of $\sQ(\zeta,t)$ given in Theorem \ref{T: structure de Q}.

\section{Meander, bridge and their sample paths}
\label{s:rappels-Denisov-Imhof}

We collect in this section a few known results of Brownian motion and related processes. Recall that if $g:= \sup\{ t<1: \, B_t = 1\}$, then  $(\mathfrak{m}_u := (1-g)^{-1/2} |B_{g+ (1-g)u}|, \, u\in [0, \, 1])$ is called a Brownian meander. In particular, $\mathfrak{m}_1$ has the Rayleigh distribution: $\P(\mathfrak{m}_1 >x) = \ee^{-x^2/2}$, $x>0$.

Let $B$ be Brownian motion, $R$ a three-dimensional Bessel process, and $\mathfrak{m}$ a Brownian meander. The processes $B$ and $R$ are assumed to start from $a$ under $\P_a$ (for $a\ge 0$) if stated explicitly; otherwise we work under $\P:= \P_0$ so that they start from 0.

\begin{fact}
\label{f:denisov}

 {\bf (Denisov~\cite{denisov})}
 Let $\theta := \inf\{ s\ge 0: \, B_s = \sup_{u\in [0, \, 1]} B_u\}$
 be the location of the maximum of $B$ on $[0, \, 1]$.
 The random variable $\theta$ has the Arcsine law:
 $\P(\theta \le x) = {2\over \pi} \arcsin \sqrt{x}$,
 $x\in [0, \, 1]$.
 The processes
 $({B_\theta - B_{(1-u)\theta}\over \theta^{1/2}}, \, u\in [0, \, 1])$
 and
 $({B_\theta - B_{\theta + u(1-\theta)}\over (1-\theta)^{1/2}}, \, u\in [0, \, 1])$
 are independent copies of the Brownian meander,
 and are also independent of the random variable $\theta$.

\end{fact}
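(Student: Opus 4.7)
The plan is to establish the three assertions of Fact \ref{f:denisov} in one sweep, by computing an explicit joint density and then reading off the structure after a change of variables.

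First I would derive the joint density of $(\theta, B_\theta, B_1)$ on $\{0<t<1,\ y>0\vee z\}$:
$$
\P(\theta\in dt,\,B_\theta\in dy,\,B_1\in dz)=\frac{y(y-z)}{\pi\,t^{3/2}(1-t)^{3/2}}\exp\!\Big(-\frac{y^2}{2t}-\frac{(y-z)^2}{2(1-t)}\Big)\,dt\,dy\,dz.
$$
Since $\theta$ is not a stopping time, the cleanest route is either via a discrete approximation (apply Sparre Andersen's cyclic lemma so that the argmax of $n$ exchangeable steps is combinatorially uniform, split the walk at its argmax, and pass to the limit via Donsker), or via a Williams-style time-reversal from the maximum combined with the Bessel(3)/Brownian duality. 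I expect this step to be the main technical hurdle.

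Next I would change variables, setting $r_1:=y/\sqrt t$ and $r_2:=(y-z)/\sqrt{1-t}$ with Jacobian $\sqrt{t(1-t)}$. The density becomes
$$
\frac{1}{\pi\sqrt{t(1-t)}}\,dt\cdot r_1 e^{-r_1^2/2}\,dr_1\cdot r_2 e^{-r_2^2/2}\,dr_2.
$$
This factorization immediately yields the arcsine law for $\theta$ and shows that $r_1=B_\theta/\sqrt\theta$ and $r_2=(B_\theta-B_1)/\sqrt{1-\theta}$ are independent Rayleigh variables, both independent of $\theta$.

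For the conditional path structure I would invoke Williams' decomposition: conditional on $(\theta,B_\theta)=(t,y)$, the reversed piece $u\mapsto y-B_{t(1-u)}$ on $[0,1]$ is (after the time change $s=tu$) a Bessel(3) bridge from $0$ to $y$ of length $t$, while independently $u\mapsto y-B_{t+u(1-t)}$ is a Bessel(3) bridge from $0$ to $y-B_1$ of length $1-t$. Brownian scaling by $1/\sqrt t$ (resp.\ $1/\sqrt{1-t}$) converts these to Bessel(3) bridges from $0$ to $r_1$ and from $0$ to $r_2$ of unit length. By Imhof's identity, a Bessel(3) bridge from $0$ to a Rayleigh endpoint of length one is exactly a Brownian meander. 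Because $r_1, r_2$ have been shown to be Rayleigh and independent of $\theta$, the conditional meander laws lift to unconditional ones, and joint independence of the two rescaled pieces and of $\theta$ follows from the independence structure in the joint density.

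The principal obstacle is the derivation in the first step, precisely because $\theta$ is not a stopping time so strong Markov does not apply at $\theta$; one has to either work combinatorially and take a scaling limit, or time-reverse carefully from the (random) location of the maximum. Once the density is established, the remainder is a clean change of variables combined with the standard Bessel-bridge/meander dictionary.
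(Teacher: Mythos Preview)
The paper does not prove Fact \ref{f:denisov}; it is stated as a known result of Denisov~\cite{denisov} and used as a black box in Section~\ref{s:1er-appli-Denisov}. There is therefore no proof in the paper to compare your proposal against.

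Your sketch is a plausible route, with one caveat. In your third step you invoke ``Williams' decomposition'' to assert that, conditional on $(\theta,B_\theta)=(t,y)$, the pre- and post-maximum pieces are independent Bessel(3) bridges. Williams' classical decomposition concerns Brownian motion on $[0,\infty)$ split at its overall minimum (or a first-passage variant thereof), not Brownian motion on a finite interval split at its argmax; the finite-interval statement is essentially the content of Denisov's theorem itself, so as written that step risks circularity. The clean way to avoid this is to push the discrete Sparre Andersen argument you mention in step~1 all the way to the path level, as Denisov does: the cyclic lemma gives independence and the correct conditional laws for the discrete pre- and post-maximum segments directly, and an invariance-principle limit then yields the full path decomposition, not merely the three-dimensional marginal $(\theta,B_\theta,B_1)$. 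If instead you want to proceed via the joint density first, you still owe a separate argument (not labelled ``Williams'') for why the conditional path pieces are Bessel bridges.
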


\begin{fact}
\label{f:imhof}

 {\bf (Imhof~\cite{imhof})}
 For any continuous function
 $F : \, C([0, \, 1], \, \r) \to \r_+$, we have
 $$
 \E\Big[ F(\mathfrak{m}_s, \; s\in [0, \, 1]) \Big]
 =
 \Big( {\pi \over 2}\Big)^{\! 1/2} \,
 \E\Big[ {1\over R_1} \, F(R_s, \; s\in [0, \, 1]) \Big] .
 $$
 In particular, for any $x>0$, the law of
 $(\mathfrak{m}_s, \; s\in [0, \, 1])$ given $\mathfrak{m}_1=x$ is the law of
 $(R_s, \; s\in [0, \, 1])$ given $R_1=x$.

\end{fact}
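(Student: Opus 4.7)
The plan is to realize both the Brownian meander and the three-dimensional Bessel process as limits of Brownian motion with appropriately rescaled conditionings at $0$, and to match them through a single application of Doob's $h$-transform. Let $\P^\epsilon$ denote the law of Brownian motion $B$ started at $\epsilon>0$ and let $\tau_0 := \inf\{t\ge 0 : B_t = 0\}$. I would first quote (or re-derive from Lévy's construction of the meander around the last zero) the fact that the law of $(B_s, \, s\in[0,1])$ under $\P^\epsilon(\,\cdot\,\mid \tau_0>1)$ converges weakly, as $\epsilon\downarrow 0$, to the law of the meander $(\mathfrak m_s, \, s\in[0,1])$. The reflection principle then yields the asymptotic $\P^\epsilon(\tau_0>1) = \P(|B_1|<\epsilon) \sim \epsilon\sqrt{2/\pi}$ as $\epsilon\downarrow 0$.

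Next I would exploit that $h(x)=x$ is harmonic for Brownian motion killed at $0$, so by Doob's $h$-transform the law of the three-dimensional Bessel process started at $\epsilon$ is absolutely continuous on $\mathcal F_1$ with respect to $\P^\epsilon(\,\cdot\,\cap\{\tau_0>1\})$ with Radon--Nikodym derivative $B_1/\epsilon$. Hence for any bounded continuous $F:C([0,1],\mathbb R)\to\mathbb R_+$,
\begin{equation*}
    \E_\epsilon\!\Big[\frac{F(R_s, \, s\in[0,1])}{R_1}\Big]
    \;=\;\frac{1}{\epsilon}\,\E^\epsilon\!\Big[F(B_s, \, s\in[0,1])\,\mathbf 1_{\{\tau_0>1\}}\Big]
    \;=\;\frac{\P^\epsilon(\tau_0>1)}{\epsilon}\,\E\!\Big[F(B_s, \, s\in[0,1])\,\big|\,\tau_0>1\Big].
\end{equation*}
Letting $\epsilon\downarrow 0$, the right side converges to $\sqrt{2/\pi}\,\E[F(\mathfrak m_s, \, s\in[0,1])]$ by the two ingredients above, while the left side converges to $\E[F(R_s, \, s\in[0,1])/R_1]$ (the Bessel process depends continuously on its starting point in law, and $1/R_1$ is $\P_0$-integrable because the density of $R_1$ behaves like $\sqrt{2/\pi}\,x^2$ near $0$). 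Rearranging gives Imhof's identity.

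Finally, to extract the statement about conditional laws, I would apply the identity with test functions of the form $F(w)=G(w)\phi(w_1)$ for bounded continuous $G,\phi$, and disintegrate both sides over the endpoint using the explicit densities $f_{R_1}(x)=\sqrt{2/\pi}\,x^2\ee^{-x^2/2}$ and $f_{\mathfrak m_1}(x)=x\ee^{-x^2/2}$. The computation $\sqrt{\pi/2}\,f_{R_1}(x)/x=f_{\mathfrak m_1}(x)$ cancels the Jacobian weight and forces $\E[G(\mathfrak m_s, \, s\in[0,1])\mid \mathfrak m_1=x]=\E[G(R_s, \, s\in[0,1])\mid R_1=x]$ for a.e.\ $x>0$, which extends to all $x>0$ by the continuity of both regular conditional distributions. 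The main delicate point in this plan is the weak convergence of $\P^\epsilon(\,\cdot\,\mid\tau_0>1)$ to the meander law on path space: the finite-dimensional convergence follows from an explicit computation with the Brownian transition density and a Bayes rule, but promoting it to functional convergence requires a tightness argument (or equivalently, Lévy's identification of the meander via scaling around the last zero $g$ before time $1$, combined with the arcsine law and the independence structure already invoked in Fact~\ref{f:denisov}).
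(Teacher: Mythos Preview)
The paper does not prove Fact~\ref{f:imhof}; it is quoted as a known result from Imhof~\cite{imhof} and used as a black box. So there is no ``paper's own proof'' to compare against.

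Your argument is correct and is one of the standard modern routes to this identity. The two ingredients you isolate are exactly the right ones: (i) the Durrett--Iglehart--Miller type convergence of $\P^\epsilon(\,\cdot\mid\tau_0>1)$ to the meander law, and (ii) the identification of the three-dimensional Bessel process as the Doob $h$-transform (with $h(x)=x$) of Brownian motion killed at $0$. Combining them via the Radon--Nikodym relation and passing to the limit $\epsilon\downarrow 0$ gives the absolute-continuity formula, and your disintegration over the endpoint is the clean way to extract the conditional identity. The only point that genuinely needs care, as you note, is upgrading the finite-dimensional convergence in (i) to weak convergence on $C([0,1],\r)$; this is handled in the literature (e.g.\ Durrett--Iglehart--Miller 1977, or Bolthausen 1976 for the bridge version), so you may simply cite it. Imhof's original 1984 proof proceeds differently, by direct manipulation of finite-dimensional densities and a factorization argument, but your approach is arguably more conceptual and just as rigorous once the tightness is supplied.
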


\begin{corollary}
\label{c:imhof}

 Let $r>0$ and $q>0$.
 Let $T_a := \inf\{ s\ge 0: \, B_s=a\}$
 for any $a\in \r$.

 {\rm (i)} The law of
 $(\mathfrak{m}_1-\mathfrak{m}_{1-s}, \; s\in [0, \, 1])$ under
 $\P( \, \bullet \, | \, \mathfrak{m}_1=r)$
 is the law of
 $(q^{-1/2}B_{qs}, \; s\in [0, \, {1\over q}T_{q^{1/2}r}])$ under
 $\P( \, \bullet \, | \, T_{q^{1/2}r}=q)$.

 {\rm (ii)} For any $t>0$, the law of
 $(R_1-R_{1-s}, \; s\in [0, \, 1])$ under
 $\P( \, \bullet \, | \, R_1=r)$
 is the law of $(q^{-1/2}(B_0-B_{qs}), \, s\in [0, \, {T_0\over q}])$
 under $\P_{q^{1/2}r}( \, \bullet \, | \, T_0=q)$.

\end{corollary}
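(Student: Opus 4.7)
The plan is to combine Imhof's identity (Fact~\ref{f:imhof}) with Brownian scaling and Williams' classical time-reversal theorem linking a Brownian motion killed at $0$ to a three-dimensional Bessel process.

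For part (i), I would first apply Brownian scaling to rewrite the right-hand side: under $\P(\cdot \,|\, T_{q^{1/2}r}=q)$, the rescaled process $(q^{-1/2}B_{qs}, \, s\in[0, \, 1])$ has the same law as $(\widetilde B_s, \, s\in[0, \, 1])$ under $\P(\cdot \,|\, T_r=1)$, since the $\sqrt{q}$-rescaling of $B$ sends the event $\{T_{q^{1/2}r}=q\}$ to $\{T_r=1\}$. Next I would use Fact~\ref{f:imhof} to rewrite the left-hand side: under $\P(\cdot \,|\, \mathfrak{m}_1=r)$ the meander has the law of a three-dimensional Bessel bridge from $0$ to $r$ of length $1$, so $(\mathfrak{m}_1-\mathfrak{m}_{1-s}, \, s\in[0,1])$ conditional on $\mathfrak{m}_1=r$ has the law of $(r-R_{1-s}, \, s\in[0,1])$, where $R$ is a Bessel(3) bridge from $0$ to $r$ of length $1$.

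The core of the argument is then Williams' time-reversal theorem: if $X$ is a standard Brownian motion started at $r>0$, then conditionally on $\{T_0=1\}$ the reversed path $(X_{1-s}, \, s\in[0,1])$ is a Bessel(3) bridge from $0$ to $r$ of length $1$. Combining this with the symmetry $x\mapsto r-x$ of standard Brownian motion identifies $(r-R_{1-s}, \, s\in[0,1])$ with a standard Brownian motion started at $0$ conditioned on $\{T_r=1\}$, which is exactly the scaled right-hand side of (i). Part (ii) is proved by precisely the same scaling followed by the same invocation of Williams, with Imhof's step omitted since the left-hand side is already phrased in terms of the Bessel bridge: indeed $(q^{-1/2}(B_0-B_{qs}), \, s\in[0,1])$ under $\P_{q^{1/2}r}(\cdot \,|\, T_0=q)$ equals in law $(r-\widehat X_s, \, s\in[0,1])$ for $\widehat X$ a Brownian motion started at $r$ conditioned on $\{T_0=1\}$, and by Williams $\widehat X_{1-s} \stackrel{d}{=} R_s$ for the same Bessel(3) bridge from $0$ to $r$, so $r-\widehat X_s \stackrel{d}{=} R_1-R_{1-s}$.

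The only real obstacle is the clean invocation of Williams' time reversal conditionally on the first-passage time, but this is a standard result (see, e.g., Revuz--Yor, Chapter VII) and no new analytic estimates are needed. The corollary is in the end a bookkeeping exercise combining Imhof's identity, Brownian scaling, and Williams' reversal, and the real care is in tracking the conditionings and the scaling factor $\sqrt{q}$ through the two maps.
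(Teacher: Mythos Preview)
Your proposal is correct and follows essentially the same route as the paper: Imhof's identity to pass from the conditioned meander to a Bessel(3) bridge, Williams' time-reversal to identify the reversed bridge with Brownian motion conditioned on its first-passage time, and Brownian scaling to introduce the parameter $q$. The only cosmetic difference is that you apply the scaling first and then Imhof/Williams, whereas the paper does Imhof/Williams first and scales afterwards; for part~(ii) the paper also adds the explicit symmetry $B\mapsto -B$ to pass from $\P(\,\cdot\,|\,T_r=1)$ to $\P_r(\,\cdot\,|\,T_0=1)$, which you fold into your formulation of Williams' theorem.
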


\noindent {\it Proof.} By Imhof's theorem (Fact \ref{f:imhof}), $(\mathfrak{m}_s, \; s\in [0, \, 1])$ given $\mathfrak{m}_1=r$, as well as $(R_s, \; s\in [0, \, 1])$ given $R_1=r$, are three-dimensional Bessel bridges of length $1$, starting from $0$ and ending at $r$. By Williams~\cite{williams}, this is equivalent to saying that both $(\mathfrak{m}_1-\mathfrak{m}_{1-s}, \; s\in [0, \, 1])$ given $\mathfrak{m}_1=r$, and $(R_1-R_{1-s}, \; s\in [0, \, 1])$ given $R_1=r$, have the distribution of $(B_s, \; s\in [0, \, T_r])$ given $T_r=1$.

By scaling, this gives (i).

To get (ii), we use moreover the fact that, by symmetry, $(B_s, \; s\in [0, \, T_r])$ under $\P( \, \bullet \, | \, T_r=1)$ has the law of $(-B_s, \; s\in [0, \, T_{-r}])$ under $\P( \, \bullet \, | \, T_{-r}=1)$, and thus has the law of $(B_0-B_s, \; s\in [0, \, T_0])$ under $\P_r( \, \bullet \, | \, T_0=1)$. This yields (ii) by scaling.\hfill$\Box$

Finally, we will use several times the Markov property for the Brownian bridge which is an inhomogeneous Markov process. Recall that $\E^{(t+s)}_{0,x}$ is expectation with respect to $\P^{(t+s)}_{0,x} (\, \cdot \, ) := \P_0 ( \, \cdot \, | \, B_{t+s}=x)$.

\begin{fact}
\label{f:Markov bridge}

 Fix $t$, $s \ge 0$ and $x \in \r$.
 For any measurable functions
 $F : \, C([0, \,t], \, \r) \to \r_+$ and
 $G : \, C([0, \,s], \, \r) \to \r_+$, we have
 \begin{align*}
& \E^{(t+s)}_{0,x} \Big[ F( B_s, s\in[0,t]) G(B_r, r\in[t,t+s])  \Big] \\
 &=  \E_0 \Big[  \sqrt{ {t+s\over s}}\ee^{{x^2\over 2(t+s)}-{(x-B-t)^2\over 2s}} F( B_s, s\in[0,t])    \E^{(s)}_{B_t, x} \left\{G(B_r, r\in[t,t+s]) \right\} \Big] .
\end{align*}
\end{fact}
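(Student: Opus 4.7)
The plan is to establish this identity as the ordinary Markov property of the Brownian bridge applied at the intermediate time $t$, combined with the explicit density of $B_t$ under the bridge. Throughout, write $\varphi_u(a,b) := (2\pi u)^{-1/2}\, \ee^{-(b-a)^2/(2u)}$ for the Gaussian transition density.

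The first ingredient is a conditional decomposition: under $\P^{(t+s)}_{0,x}$, conditionally on $B_t=y$, the two segments $(B_u,\, u\in[0,t])$ and $(B_r,\, r\in[t,t+s])$ are independent, and they are respectively a Brownian bridge from $0$ to $y$ of length $t$ and a Brownian bridge from $y$ to $x$ of length $s$. This follows from the Markov property of Brownian motion at time $t$, since once $B_t$ is fixed, conditioning on $B_{t+s}=x$ only reshapes the future piece. The second ingredient is the density of $B_t$ under $\P^{(t+s)}_{0,x}$, which is the standard ratio
$$
q(y) \;=\; \frac{\varphi_t(0,y)\,\varphi_s(y,x)}{\varphi_{t+s}(0,x)} \;=\; \varphi_t(0,y)\cdot \sqrt{\frac{t+s}{s}}\cdot \exp\!\Big(\frac{x^2}{2(t+s)} - \frac{(x-y)^2}{2s}\Big).
$$
Combining the two ingredients gives
$$
\E^{(t+s)}_{0,x}[F\cdot G] \;=\; \int \E^{(t)}_{0,y}\big[F(B_u,u\in[0,t])\big]\cdot \E^{(s)}_{y,x}\big[G(B_r,r\in[t,t+s])\big]\cdot q(y)\,\d y.
$$

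To conclude, I would absorb the factor $\varphi_t(0,y)$ appearing inside $q(y)$ into an unconditional expectation via the disintegration $\int \E^{(t)}_{0,y}[F(\omega)]\,\varphi_t(0,y)\,\d y = \E_0[F(B_u,u\in[0,t])]$, applied jointly with a function of $y$ to be read as the corresponding function of $B_t$. This transforms the integral above into
$$
\E_0\!\left[\sqrt{\frac{t+s}{s}}\,\ee^{x^2/(2(t+s)) - (x-B_t)^2/(2s)}\, F(B_u,u\in[0,t])\, \E^{(s)}_{B_t,x}\left\{G(B_r,r\in[t,t+s])\right\}\right],
$$
which is precisely the right-hand side of the Fact. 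The argument is essentially bookkeeping; the only points deserving care are the algebraic verification that the ratio of the three Gaussian densities collapses to the announced exponential factor times $\sqrt{(t+s)/s}$, and the measurability check (by Fubini and the joint measurability of the bridge kernel) that the inner bridge functional $y\mapsto \E^{(s)}_{y,x}\{G(B_r,r\in[t,t+s])\}$ may legitimately be substituted with $y=B_t$ inside the outer $\P_0$-expectation. A standard monotone-class argument reduces the claim to bounded continuous cylinder $F$ and $G$ if desired, but the displayed formula applies directly to positive measurable functionals.
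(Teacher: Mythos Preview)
Your proof is correct and follows the standard route: condition on $B_t$, use the Markov property to factor the bridge into two independent sub-bridges, compute the density of $B_t$ under $\P^{(t+s)}_{0,x}$ as a ratio of Gaussian kernels, and then re-absorb the factor $\varphi_t(0,y)$ into an unconditional $\P_0$-expectation. The algebra checks out, and the measurability remark at the end is adequate.

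Note that the paper itself does \emph{not} supply a proof here: the statement is recorded as a Fact (alongside Denisov's and Imhof's theorems) and is simply invoked as a known property of the Brownian bridge. So there is no ``paper's proof'' to compare against; you have furnished a clean justification of a result the authors take for granted. Incidentally, your version also silently corrects a typo in the paper's display, where $(x-B-t)^2$ should read $(x-B_t)^2$.
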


\section{Proof of Lemma \ref{l:queue}}
\label{s:1er-appli-Denisov}

Let $x>0$ and let $F_1: \, C(\r_+, \, \r) \to \r_+$  be a bounded continuous function. We need to check 
\begin{eqnarray*}
 &&\lim_{\zeta\to \infty}
    \E \Big[{\bf 1}_{\{\max_{[0,\zeta]} \sigma B_s\le x,\sigma B_\zeta\in (-\zeta^{2/3},-\zeta^{1/3}), \theta \le x\}}
    F_1(\sigma B_s, \, s\in [0, \, \zeta]) \,
    \ee^{- 2  \int_0^\zeta G^*_v (\sigma B_v) \d v} \, |B_\zeta|
    \Big]
    \\
 &=& \int_0^{x/\sigma}
    \E \Big[ F_1(\Gamma^{(b)}_s, \, s\ge 0) \,
    \ee^{-2  \int_0^\infty G^*_v(\sigma \Gamma^{(b)}_v) \d v}
    \Big] \d b,
\end{eqnarray*}

\noindent where $\Gamma^{(b)}$ is the process defined in (\ref{U}), $\theta=\theta_B(\zeta) := \inf\{s\in [0,\zeta] : B_s = \max_{u \in [0,\zeta]} B_u \}$, and $G_v(\cdot)$ is the function defined in (\ref{Gv*}). [We do not use any particular property of $G_v$ except its measurability and positivity.]
%%%%xxxx  z: Est-ce vrai?

The random variable ${\theta \over \zeta}$ has the Arcsine law. According to Denisov's theorem (Fact \ref{f:denisov}), the two processes\footnote{The processes $Y$ and $Z$ depend, of course, on $\zeta$.} $(Y_u := {B_\theta - B_{(1-u)\theta}\over \theta^{1/2}}, \, u\in [0, \, 1])$ and $(Z_u := {B_\theta - B_{\theta + u(\zeta-\theta)}\over (\zeta-\theta)^{1/2}}, \, u\in [0, \, 1])$ are independent Brownian meanders, and are also independent of the random variable $\theta$.

By definition,
\begin{eqnarray}
    \int_0^\zeta G^*_v (\sigmaÊB_v) \d v
 &=&\theta \int_0^1 G^*_{u \theta} (\sigma \theta (Y_1 -Y_{1-u})) \d u +
    \nonumber
    \\
 && + (\zeta-\theta) \int_0^1 G^*_{\theta + u(\zeta-\theta)} ( \sigma \theta^{1/2} Y_1 - \sigma (\zeta-\theta)^{1/2} Z_u) \d u.
    \label{meander1}
\end{eqnarray}

\noindent Also, $B_\zeta = \theta^{1/2} Y_1 - (\zeta-\theta)^{1/2} Z_1$, and
\begin{equation}
    B_s
    =
    \begin{cases}
     \theta^{1/2} (Y_1 - Y_{1- {s\over \theta}}),
      &\text{ if $s\in [0, \, \theta]$,} \cr\cr
     \theta^{1/2} Y_1 - (\zeta -\theta)^{1/2} Z_{s- \theta\over \zeta- \theta},
      &\text{ if $s\in [\theta, \, \zeta]$.} \cr
    \end{cases}
    \label{meander2}
\end{equation}

\medskip

\begin{lemma}
\label{l:meandre=>Bessel}

 Let $(\mathfrak{m}_s, \, s\in [0, \, 1])$ be a Brownian meander. Let $\varepsilon^1: \, \r_+ \to \r_+$ and $\varepsilon^2: \, \r_+ \to \r_+$ be two measurable functions such that $\lim_{t\to\infty} \varepsilon^1_t=0$ and $\lim_{t\to\infty} \varepsilon_t^2=\infty$.
 For $x\in \r$, $\ell \in \r$, $a\ge 0$, $b\ge 0$
 and bounded continuous function
 $F : \, C([0, \, 1], \, \r) \to \r_+$, we have
 \begin{eqnarray*}
  &&\lim_{t\to \infty}
     \E\Big[{\bf 1}_{\{\mathfrak{m}_1\in (\varepsilon_t^1,\varepsilon^2_t)\}}
     \mathfrak{m}_1\, F(t^{1/2}\mathfrak{m}_{bs\over t}, \, s\in [0, \, 1])\,
     \ee^{-at\int_0^1 G^*_{x+ ut} (\ell - \sigma t^{1/2} \mathfrak{m}_u) \d u}
     \Big]
     \\
  &=& \Big({\pi \over 2}\Big)^{\! 1/2} \,
     \E\Big[
     F(R_{bs}, \, s\in [0, \, 1])\,
     \ee^{-a\int_0^\infty G^*_{x+v} (\ell - \sigma R_v)  \d v}
     \Big] ,
 \end{eqnarray*}
 where $R$  is a
 three-dimensional Bessel process.
\end{lemma}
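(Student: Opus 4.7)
The plan is to combine Imhof's identity (Fact~\ref{f:imhof}) with a Brownian scaling of the three-dimensional Bessel process. The crucial observation is that the extra factor $\mathfrak{m}_1$ on the left-hand side is precisely what is needed to cancel the $1/R_1$ Radon--Nikodym density that appears in Imhof's identification of the meander with the Bessel process, so that after the change of measure no weight remains and a scaling argument finishes the job.

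First I would apply Fact~\ref{f:imhof} to the non-negative functional
\[
H(\omega) := \omega(1)\, \mathbf{1}_{\{\omega(1) \in (\varepsilon^1_t,\varepsilon^2_t)\}}\, F\!\bigl(t^{1/2}\omega_{bs/t},\, s\in[0,1]\bigr)\, \exp\!\Bigl(-at\int_0^1 G^*_{x+ut}\bigl(\ell-\sigma t^{1/2}\omega_u\bigr)\,du\Bigr),
\]
so that $H(R)/R_1$ contains no $R_1$ in the denominator. This rewrites the left-hand side of the lemma as
\[
\sqrt{\pi/2}\, \E\Bigl[\mathbf{1}_{\{R_1\in(\varepsilon^1_t,\varepsilon^2_t)\}}\, F\!\bigl(t^{1/2} R_{bs/t},\, s\in[0,1]\bigr)\, e^{-at\int_0^1 G^*_{x+ut}(\ell-\sigma t^{1/2} R_u)\,du}\Bigr].
\]

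Second, I would perform a Brownian scaling on $R$: the process $\widetilde R_v := t^{1/2} R_{v/t}$ is again a three-dimensional Bessel process started at $0$, and under the substitution $v = tu$ one has $t^{1/2} R_{bs/t} = \widetilde R_{bs}$, $R_1 = t^{-1/2}\widetilde R_t$, and $t\int_0^1 G^*_{x+ut}(\ell-\sigma t^{1/2} R_u)\,du = \int_0^t G^*_{x+v}(\ell-\sigma \widetilde R_v)\,dv$. Renaming $\widetilde R$ back as $R$, the expression becomes
\[
\sqrt{\pi/2}\, \E\Bigl[\mathbf{1}_{\{t^{-1/2} R_t\in(\varepsilon^1_t,\varepsilon^2_t)\}}\, F(R_{bs},\, s\in[0,1])\, e^{-a\int_0^t G^*_{x+v}(\ell-\sigma R_v)\,dv}\Bigr].
\]

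Finally, I would take $t \to \infty$. Since $t^{-1/2} R_t \stackrel{d}{=} R_1$ for every $t$ and $(\varepsilon^1_t,\varepsilon^2_t)\uparrow (0,\infty)$, we have $\P(t^{-1/2} R_t \notin (\varepsilon^1_t,\varepsilon^2_t)) \to 0$, so the indicator may be dropped at the cost of an error of order $\|F\|_\infty\cdot o(1)$. Because $G^* \ge 0$, the integral $\int_0^t G^*_{x+v}(\ell-\sigma R_v)\,dv$ is monotone nondecreasing in $t$, so $e^{-a\int_0^t\cdots}$ decreases pointwise to $e^{-a\int_0^\infty G^*_{x+v}(\ell-\sigma R_v)\,dv}$ (interpreted as $0$ on the event that the integral diverges). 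Dominated convergence with constant dominant $\|F\|_\infty$ then yields the announced right-hand side. No deep difficulty arises; the only mild subtlety is the removal of the indicator, which is handled via the tightness of the equidistributed family $\{t^{-1/2} R_t\}_{t>0}$ rather than any pointwise convergence of the individual random variables.
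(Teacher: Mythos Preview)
Your proof is correct and follows essentially the same route as the paper: apply Imhof's identity so that the factor $\mathfrak{m}_1$ cancels the $1/R_1$ density, use Bessel scaling to turn $t^{1/2}R_{\cdot/t}$ into $R_\cdot$, and conclude by dominated convergence together with $\P(t^{-1/2}R_t\notin(\varepsilon^1_t,\varepsilon^2_t))\to 0$. Your write-up is in fact slightly more explicit than the paper's about the domination and the monotonicity of $\int_0^t G^*_{x+v}(\ell-\sigma R_v)\,dv$.
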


\medskip

\noindent {\it Proof of Lemma \ref{l:meandre=>Bessel}.} By Imhof's theorem (Fact \ref{f:imhof}), we have, for $t\ge b$,
\begin{eqnarray*}
 &&\E\Big[{\bf 1}_{\{\mathfrak{m}_1\in(\varepsilon_t^1,\varepsilon^2_t)\}}
     \mathfrak{m}_1\, F(t^{1/2}\mathfrak{m}_{bs\over t}, \, s\in [0, \, 1])\,
     \ee^{-at\int_0^1 G^*_{x+ ut} (\ell - \sigma t^{1/2} \mathfrak{m}_u) \d u}
     \Big]
     \\
 &=& \Big({\pi \over 2}\Big)^{\! 1/2} \,
    \E\Big[{\bf 1}_{\{R_1\in(\varepsilon_t^1,\varepsilon^2_t)\}} F(t^{1/2}R_{bs\over t}, \, s\in [0, \, 1])\,
    \ee^{-at\int_0^1 G^*_{x+ ut} (\ell - \sigma t^{1/2} R_u) \d u}
    \Big]
    \\
 &=& \Big({\pi \over 2}\Big)^{\! 1/2} \,
    \E\Big[{\bf 1}_{\{R_tt^{-1/2}\in(\varepsilon_t^1,\varepsilon^2_t)\}} F(R_{bs}, \, s\in [0, \, 1])\,
    \ee^{-a\int_0^t G^*_{x+v} (\ell - \sigma R_v) \d v} \Big] ,
\end{eqnarray*}

\noindent the second identity being a consequence of the scaling property. Let $t\to \infty$. Since $\P(R_tt^{-1/2}\notin(\varepsilon_t^1,\varepsilon^2_t))\to 0$, Lemma \ref{l:meandre=>Bessel} follows by dominated convergence.\hfill$\Box$

\bigskip

\noindent {\it Proof of Lemma \ref{l:queue}.} Recall (\ref{meander1}) and (\ref{meander2}). Let $F_{1,a}(Y,Z) := F_1(  a^{1/2} \sigma (Y_1 - Y_{1- {s\over a}}){\bf 1}_{\{ s\le a \}} +\sigma (a^{1/2} Y_1 - (\zeta -a)^{1/2} Z_{s- a\over \zeta- a} ){\bf 1}_{\{ s\ge a \}}, \ s\in[0,\zeta])$. Then
\begin{eqnarray*}
 &&
\E \Big[ {\bf 1}_{\{\max_{[0,\zeta]}  \sigma B_s\le x, \sigma B_\zeta\in ( -\zeta^{2/3},-\zeta^{1/3}), \theta \le x\}}
F_1(\sigma B_s, \, s\in [0, \, \zeta]) \,
\ee^{- 2  \int_0^\zeta G^*_v (\sigma B_v) \d v} \, |B_\zeta| \Big] \\
&=& \int_0^x \P(\theta \in \! \d a)\, \E \Big[  {\bf 1}_{\{ \sigma a^{1/2}Y_1 \le x\}}  F_{1,a}(Y, Z) \ee^{-2  a \int_0^1 G^*_{av}(\sigma a^{1/2} (Y_1 - Y_{1-v}))\d v} \\ && \qquad \ee^{-2(\zeta-a)\int_0^1 G^*_{a+v(\zeta-a)} (\sigma a^{1/2} Y_1  -(\zeta-a)^{1/2} \sigma Z_v)\d v}  | a^{1/2} Y_1-(\zeta-a)^{1/2} Z_1 | {\bf 1}_{\{  -\sigma B_\zeta \in  [\zeta^{1/3},\zeta^{2/3}  ] \}} \Big]
\\
&=& \int_0^x \zeta^{1/2} \P(\theta \in \! \d a)\, \E \Big\{  {\bf 1}_{\{ \sigma a^{1/2}Y_1 \le x\}}  \ee^{-2  a \int_0^1 G^*_{av}(\sigma a^{1/2} (Y_1 - Y_{1-v}))\d v} \\ && \qquad \qquad \E \Big[  F_{1,a}(Y, Z)  \ee^{-2  (\zeta-a) \int_0^1 G^*_{a+v(\zeta-a)} (\sigma a^{1/2} Y_1  -(\zeta-a)^{1/2} \sigma Z_v)\d v} {| a^{1/2} Y_1-(\zeta-a)^{1/2} Z_1|\over \zeta^{1/2}} \\
&& \qquad \qquad \qquad {\bf 1}_{\{ Z_1  \in  [\varepsilon_\zeta^1,\varepsilon_\zeta^2 ] \}} \, \big| \, Y_s, \, s\le 1\Big] \Big\} ,
\end{eqnarray*}

\noindent where $\varepsilon_\zeta^1:= ({\zeta^{1/3}\over \sigma} +a^{1/2} Y_1)(\zeta-a)^{-1/2} $ and $\varepsilon_\zeta^2 := ({\zeta^{2/3}\over \sigma} +a^{1/2} Y_1)(\zeta-a)^{-1/2}$.

By Lemma \ref{l:meandre=>Bessel}, we get that for each $a \in [0,x]$ when $\zeta \to \infty$, the conditional expectation $\E [ \, \ldots \, | \, Y_s, \, s\le 1]$ on the right-hand side converges to
$$
\Big({\pi \over 2} \Big)^{1/2} \E \Big[ \bar F_{1,a}(Y,R) \ee^{- 2  \int_0^\infty G^*_{v+a}( \sigma a^{1/2} Y_1-\sigma R_v )\d v }  \big| Y_s, s\le 1\Big]
$$
where
$$
\bar F_{1,a}(Y,R) := F_1(  \sigma  a^{1/2} (Y_1 - Y_{1- {s\over a}}){\bf 1}_{\{ s\le a \}} + \sigma (a^{1/2} Y_1 -  R_{s-a} ){\bf 1}_{\{ s\ge a \}}, \ s\in[0,\infty)) ,
$$

\noindent with $R$ and $Y$ being independent.
Since we only allow $a$ to vary between 0 and $x$ we may conclude that
\begin{eqnarray}
 &&\lim_{\zeta\to \infty}
\E \Big[ {\bf 1}_{\{\max_{[0,\zeta]}  \sigma B_s\le x, \sigma B_\zeta\in ( -\zeta^{2/3},-\zeta^{1/3}), \theta \le x\}}
F_1(\sigma B_s, \, s\in [0, \, \zeta]) \,
\ee^{- 2  \int_0^\zeta G^*_v (\sigma B_v) \d v} \, |B_\zeta| \Big]
    \nonumber
    \\
 & & =\int_0^x {\! \d a \over (2\pi a)^{1/2}} \,
    \E \Big[ {\bf 1}_{\{ \sigma a^{1/2} \mathfrak{m}_1\le x\}}
    \bar F_{1,a}(\mathfrak{m},R) \times
    \nonumber
    \\
 && \qquad\times
    \ee^{-2  a \int_0^1 G^*_{au} (\sigmaÊa^{1/2} (\mathfrak{m}_1 - \mathfrak{m}_{1-u})) \d u
    -2  \int_0^\infty G^*_{a+v} (\sigma a^{1/2} \mathfrak{m}_1 - \sigma R_v) \d v} \Big]
    \nonumber
    \\
 && =:I_{(\ref{c3})} ,
    \label{c3}
\end{eqnarray}

\noindent where the Brownian meander $\mathfrak{m}$ and the three-dimensional Bessel process $R$ are assumed to be independent. Let $V^{(a)}_s := a^{1/2}(\mathfrak{m}_1-\mathfrak{m}_{1-{s\over a}})$ if $s\in [0, \, a]$ and $V^{(a)}_s := a^{1/2} \mathfrak{m}_1- R_{s-a}$ if $s\ge a$. 
We observe that $a \int_0^1 G^*_{ua} (\sigma a^{1/2} (\mathfrak{m}_1 - \mathfrak{m}_{1-u})) \d u +\int_0^\infty G^*_{a+v} (\sigma a^{1/2} \mathfrak{m}_1 - \sigma R_v) \d v$ is, in fact, $\int_0^\infty G^*_s(\sigma V^{(a)}_s) \d s$. So
\begin{eqnarray*}
    I_{(\ref{c3})}
 &=& \int_0^x {\! \d a\over (2\pi a)^{1/2}} \,
    \E \Big[ {\bf 1}_{\{\sigma a^{1/2} \mathfrak{m}_1\le x\}}
    F_1(\sigma V^{(a)}_s, \, s\ge 0)
    \, \ee^{-2  \int_0^\infty G^*_s(\sigma V^{(a)}_s) \d s} \Big]
    \\
 &=&\int_0^x {\! \d a\over (2\pi a)^{1/2}}
    \int_0^{{x\over \sigma \sqrt{a}}} \!\! \d r \; r \ee^{-r^2/2} \times
    \\
 &&\qquad \times
    \E \Big[ F_1(\sigma V^{(a)}_s, \, s\ge 0)
    \, \ee^{-2  \int_0^\infty G^*_s(\sigma V^{(a)}_s) \d s}
    \, \Big| \, \mathfrak{m}_1=r \Big] ,
\end{eqnarray*}

\noindent where, in the last identity, we used the fact that $\mathfrak{m}_1$ has the Rayleigh distribution. Applying Corollary \ref{c:imhof}~(i) to $q:=a$, and recalling the process $\Gamma^{(a^{1/2}r)}$ from (\ref{U}), this yields
\begin{eqnarray*}
    I_{(\ref{c3})}
 &=& \int_0^x {\! \d a\over (2\pi a)^{1/2}}
    \int_0^{{x\over \sigma \sqrt{a}}} \!\! \d r \; r \ee^{-r^2/2} \times
    \\
 &&\qquad \times
    \E \Big[
    F_1( \sigma  \Gamma^{(a^{1/2}r)}_s, \, s\ge 0) \,
   \ee^{-2  \int_0^\infty G^*_v(\sigma \Gamma^{(a^{1/2}r)}_v) \d v}
    \, \Big| \, T_{a^{1/2}r}=a \Big] .
\end{eqnarray*}

\noindent By a change of variables $r:= a^{-1/2} b$ and Fubini's theorem, the expression on the right-hand is
\begin{eqnarray*}
 &=& \int_0^{x/\sigma} \!\! \d b \int_0^x \!\! \d a \,
    {b\ee^{-b^2/(2a)}\over (2\pi a^3)^{1/2}}
    \E \Big[ F_1( \sigma \Gamma^{(b)}_s, \, s\ge 0) \,
    \ee^{-2  \int_0^\infty G^*_v(\sigma \Gamma^{(b)}_v) \d v}
    \, \Big| \, T_b=a \Big]
    \\
 &=&\int_0^{x/\sigma} \!\! \d b \,
    \E \Big[ F_1( \sigma \Gamma^{(b)}_s, \, s\ge 0) \,
    \ee^{-2  \int_0^\infty G^*_v(\sigma \Gamma^{(b)}_v) \d v}
    {\bf 1}_{ \{ T_b \le x\} }\Big] ,
\end{eqnarray*}

\noindent completing the proof of Lemma \ref{l:queue}.\hfill$\Box$

\section{Proof of Lemma \ref{l:g2}}
\label{s:proof-lemma7.1}

We first recall the following fact concerning the F-KPP equation. As already pointed out, $u(t,x) := G_t(x)$ is the solution of a version of the F-KPP equation with heavyside initial data. Define $m_t(\varepsilon) := \inf \{ x: G_t(x) = \varepsilon \}$ for $\varepsilon \in (0, 1)$. 
Bramson \cite{bramson83} shows that, for any $\varepsilon\in (0, 1)$, there exists a constant $C(\varepsilon) \in \r$ such that $m_t(\varepsilon) = \frac3{2} \log t + C(\varepsilon) + o(1)$, $t\to \infty$.

%jb %todo significative change. A verifier
% en particulier il y avait des sigma dans \P( \sigma W \le x+c_\epsilon )
% xxxx Cela a l'air correct, sans \sigma.

\begin{fact}
\label{f:stretching-lemma}

 {\bf (McKean~\cite[pp.~326--327]{mckean})}
 For any $\varepsilon \in (0,1)$, let $c_\varepsilon = w^{-1}(\epsilon)$, i.e., $\P(W\le c_\varepsilon) = \varepsilon$. Then, for $\varepsilon \in (0,1)$, the following convergences are monotone as $t\to \infty$:
\begin{align*}
G_t(x+m_t(\varepsilon)) \nearrow \P( W \le x+c_\varepsilon ) &= w(x+c_\varepsilon ) \qquad \text{for } x \le 0,\\
G_t(x+m_t(\varepsilon)) \searrow \P(W \le x+c_\varepsilon ) &= w(x+c_\varepsilon ) \qquad \text{for } x \ge 0.
\end{align*}

\end{fact}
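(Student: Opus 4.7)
The plan is to deduce the stretching lemma from Bramson's pointwise convergence $G_t(x+m_t(\varepsilon))\to w(x+c_\varepsilon)$ by a parabolic comparison argument applied separately on each of the two half-lines $\{x\le 0\}$ and $\{x\ge 0\}$. Both sides already agree at $x=0$ (both equal $\varepsilon$ by the definitions of $m_t(\varepsilon)$ and $c_\varepsilon$), so the content of the statement is that the sign of the difference $D_t(x):=G_t(x+m_t(\varepsilon))-w(x+c_\varepsilon)$ is fixed on each half-line and its absolute value is monotone decreasing in $t$.

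First, I would derive the PDE for $\widetilde u(t,x):=G_t(x+m_t(\varepsilon))$. Conditioning on the first branching of the BBM shows that $G_t$ solves a critical F-KPP equation with Heaviside initial data $G_0(x)=\mathbf{1}_{\{x\ge 0\}}$; passing to the moving frame $x\mapsto x+m_t(\varepsilon)$ yields the same equation plus an extra drift $\dot m_t(\varepsilon)\,\partial_x\widetilde u$, where $\dot m_t(\varepsilon)=\tfrac{3}{2t}+o(1/t)$ by Bramson's sharp asymptotics. The shifted wave $w(\cdot+c_\varepsilon)$ is a stationary solution of the same equation with the drift term set to zero, so $D_t$ satisfies a linear parabolic equation with an $O(1/t)$ forcing term proportional to $w'(\cdot+c_\varepsilon)>0$.

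Second, I would apply the parabolic maximum principle on each half-line, using $D_t(0)=0$ as Dirichlet data together with decay at $\pm\infty$. At $t=0$, the datum $D_0(x)=\mathbf{1}_{\{x\ge 0\}}-w(x+c_\varepsilon)$ is $\le 0$ on $\{x<0\}$ and $\ge 0$ on $\{x>0\}$, and the forcing has a definite sign on each half-line (the same as that of $\dot m_t(\varepsilon)>0$, eventually). Combining these, one concludes that $D_t\le 0$ on $\{x\le 0\}$ and $D_t\ge 0$ on $\{x\ge 0\}$ for all $t>0$. A second application of the maximum principle to $\partial_t\widetilde u$, using again the sign of $\dot m_t(\varepsilon)$ and the signed initial data, would produce $\partial_t\widetilde u\ge 0$ on $\{x\le 0\}$ and $\partial_t\widetilde u\le 0$ on $\{x\ge 0\}$, which is exactly the claimed monotonicity.

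The hardest step will be handling the time-dependent drift $\dot m_t(\varepsilon)$, which is positive but tends to $0$, together with the need to control $D_t$ uniformly as $|x|\to\infty$ in order to run the maximum principle on an unbounded domain; careful barrier constructions are needed near $\pm\infty$, where $\widetilde u\to 0$ or $1$ at rates dictated by the critical traveling wave. As an alternative route avoiding PDE technicalities, one can work directly with the BBM semigroup $T_t v(y):=\E[\prod_{i\le N(t)} v(y-X_i(t))]$, whose monotonicity in the initial datum $v$ combined with a coupling that shifts the Heaviside by the (eventually monotone) increment $m_{t+s}(\varepsilon)-m_t(\varepsilon)$ yields a probabilistic version of the same comparison; this is essentially McKean's original approach on the pages cited.
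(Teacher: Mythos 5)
The paper does not prove this Fact at all: it is quoted from McKean, so the relevant comparison is between your sketch and McKean's stretching argument. Your sketch has two genuine gaps. First, everything is driven by the assumption that $\dot m_t(\varepsilon)=\tfrac{3}{2t}+o(1/t)>0$ (and, in the second step, by an implicit sign for $\ddot m_t(\varepsilon)$). Bramson's asymptotics $m_t(\varepsilon)=\tfrac32\log t+C(\varepsilon)+o(1)$ give no control whatsoever on derivatives of $t\mapsto m_t(\varepsilon)$; positivity of $\dot m_t(\varepsilon)$ for all $t$ (the Fact asserts monotonicity in $t$, not eventual monotonicity) is not available from the cited results and is essentially of the same nature as what you are trying to prove. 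Second, and decisively, the forcing term $\dot m_t(\varepsilon)\,\partial_x\widetilde u$ (or $\dot m_t(\varepsilon)\,w'(\cdot+c_\varepsilon)$) is nonnegative on \emph{both} half-lines, since $\widetilde u$ and $w$ are increasing in $x$. It therefore has the favourable sign only on $\{x\ge 0\}$, where you want to propagate $D_t\ge 0$; on $\{x\le 0\}$ you want to propagate $D_t\le 0$ against a nonnegative source, and the maximum principle with Dirichlet data $D_t(0)=0$ gives nothing there — "combining these" is a non sequitur. The same structural problem recurs in your second application to $v=\partial_t\widetilde u$, whose equation carries the source $\ddot m_t(\varepsilon)\,\partial_x\widetilde u$ of unknown sign (again the same sign on both half-lines), and whose "signed initial data" near $t=0$ is not established either.

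The probabilistic fallback in your last sentence is not a repair and is not McKean's argument: since $0<G_s<1$ everywhere while the Heaviside datum takes the values $0$ and $1$, no spatial shift of the Heaviside is globally comparable to $G_s$, so order-preservation of the semigroup $T_t$ plus a shift coupling cannot produce the two-sided recentered comparison, and it would again lean on an unproved monotonicity of $t\mapsto m_t(\varepsilon)$. What McKean actually proves on pp.~326--327 is that the F-KPP flow preserves the "stretching" partial order (equivalently, a single-crossing statement: the solution started from Heaviside data, compared with \emph{every} spatial translate of its time-$s$ snapshot, crosses it at most once, a configuration preserved in time); applying this with the Heaviside datum versus $G_s$ and reading the conclusion at the $\varepsilon$-quantiles yields exactly the two signed inequalities and the monotonicity in $t$, with no information needed on $\dot m_t(\varepsilon)$. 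That single-crossing/stretching ingredient is the missing idea in your proposal.
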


%stretching
%todo %jb bon, c'est pas du tout une consequence directe quand meme; m_v pars a - l'infini en v=0 !!! et c'est pas possible (penser à r=0)

Recall that $G_t(m_t +x) \to w(x)$, $\forall x\in \r$, and that $m_t := {3\over 2}\log t + C_B$. Since $\P(W\le y) \sim C|y|\ee^y$, $y\to -\infty$ (see (\ref{lalley-sellke})), a consequence of Fact \ref{f:stretching-lemma} (in the case $x\le 0$) is that for some constant $c>0$, and any $v>0$ and $r\in \r$,
\begin{equation}
    G_v(m_v +r )\le c\, (|r|+1) \ee^{r}.
    \label{eq:stretching}
\end{equation}

%jb added the +1 above, sinon ne fait pas sens.

Let us turn to the proof of Lemma \ref{l:g2}.
Let $B$ be Brownian motion (under $\P= \P_0$). Recall that $\E_{0,{y\over \sigma}}^{(t)}$ is expectation with respect to $\P_{0,{y\over \sigma}}^{(t)} := \P( \, \bullet \, | \, B_t = {y\over \sigma})$.
We further subdivise the proof of Lemma \ref{l:g2} into two lemmas.

\begin{lemma}\label{l:conv-kappa}
Let $\kappa:\r_+\to\r$ be a bounded Borel function with compact support. Take $x>0$ and recall the definition of $(a_s,s\in[0,t])$ in (\ref{def:as}). Then, for any $b>{a_0\over \sigma}$,
$$
\lim_{t\to\infty} t^{3/2} \,
\E_b\left[ {\bf 1}_{\{ \sigma B_s\ge a_s,s\in[0,t] \}}\, \kappa( \sigma B_{t}-a_t) \right]
=
{\sigma b - a_0\over 2\sqrt{\pi}} \int_{\r_+} r\kappa(r) \d r.
$$
\end{lemma}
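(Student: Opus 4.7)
The plan is to condition on $B_t=y$, turn the conditional probability into a Brownian bridge probability, decompose the bridge at the midpoint $t/2$ via Fact \ref{f:Markov bridge}, and apply on each half the classical identity $\P_{a,c}^{(T)}(\min_{[0,T]}B_s\ge 0)=1-\ee^{-2ac/T}$, valid for $a,c\ge 0$, where $\P_{a,c}^{(T)}$ denotes the law of the Brownian bridge from $a$ to $c$ of length $T$.

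First I would parametrize the endpoint by $r:=\sigma y-a_t=\sigma y-(m_t-x)$; since $\kappa$ has compact support only bounded $r$ contribute. The unconditional density
\[
\P_b(B_t\in \d y)=\tfrac{1}{\sigma\sqrt{2\pi t}}\,\ee^{-(y-b)^2/(2t)}\,\d r
\]
is asymptotic to $\tfrac{1}{\sigma\sqrt{2\pi t}}\,\d r$ because $y-b=O(\log t)$ makes the Gaussian exponent tend to $0$. It therefore suffices to show the conditional bridge probability satisfies $\P_{b,y}^{(t)}(\sigma B_s\ge a_s,\,s\in[0,t])\sim \tfrac{2(\sigma b-a_0)r}{\sigma^2 t}$ and then integrate in $r$.

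To compute this, I would shift $\tilde B_s:=B_s+x/\sigma$, so that the barrier is $0$ on $[0,t/2]$ and $m_t/\sigma$ on $(t/2,t]$, and insert the midpoint $w:=\tilde B_{t/2}$. Conditionally on the two endpoints, $w$ is Gaussian with mean $\tfrac12(\tilde b+\tilde y)$ and variance $t/4$, with $\tilde b=b+x/\sigma$ and $\tilde y=r/\sigma+m_t/\sigma$. The event forces $w\ge m_t/\sigma$, after which the two halves are independent bridges and the bridge identity gives the product
\[
\bigl(1-\ee^{-4\tilde b w/t}\bigr)\bigl(1-\ee^{-4(w-m_t/\sigma)r/(\sigma t)}\bigr).
\]
On the natural scale $w\asymp\sqrt t$ both exponents are $o(1)$ uniformly in the bounded range of $r$, so the linearization $1-\ee^{-u}\sim u$ is justified. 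The integrand then becomes $\tfrac{16\tilde b r}{\sigma t^2}\,w(w-m_t/\sigma)$ times the Gaussian density in $w$. Since $m_t/\sigma=O(\log t)=o(\sqrt t)$, the restriction $w\ge m_t/\sigma$ only costs a harmless factor $1/2$, and only the $w^2$ term contributes at leading order, giving a contribution of $t/8$. Hence $\P_{b,y}^{(t)}(\text{event})\sim \tfrac{2\tilde b r}{\sigma t}$.

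Putting everything together, multiplying by $\kappa(r)/(\sigma\sqrt{2\pi t})$ and integrating in $r$ yields $t^{-3/2}\tfrac{2\tilde b}{\sigma^2\sqrt{2\pi}}\int_{\r_+} r\kappa(r)\,\d r$, and with $\sigma=\sqrt 2$ (so that $\sigma^3\sqrt{2\pi}=4\sqrt\pi$) together with $\sigma\tilde b=\sigma b+x=\sigma b-a_0$, this reduces precisely to $\tfrac{\sigma b-a_0}{2\sqrt\pi}\int_{\r_+} r\kappa(r)\,\d r$, as required. The main technical hurdle will be the uniform control needed to justify swapping limits and integrals: the linearization of $1-\ee^{-u}$ uses $1-\ee^{-u}\le u\wedge 1$, and the upper estimate of Lemma \ref{l:small o of 1/t} supplies the domination needed to interchange $\lim_t$ and the $r$-integral.
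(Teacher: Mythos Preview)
Your argument is correct and reaches the right constant, but it follows a different route from the paper's own proof.

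The paper works directly with the \emph{unconditioned} Brownian motion: it applies the Markov property at time $t/2$, then on the second half uses the explicit joint density $\P_a(\min_{[0,T]}\sigma B>0,\ \sigma B_T\in d\eta)$ from (\ref{joint-min-Bt}) with its $\sinh$ form, and on the first half passes from killed Brownian motion to the three-dimensional Bessel process via the $h$-transform $((\sigma B_{t/2}+x)\mathbf{1}_{\{\sigma\underline B\ge -x\}}$ is the Bessel Radon--Nikodym factor), finishing with the explicit evaluation $\E_0[\ee^{-R_{t/2}^2/t}]=2^{-3/2}$. Your approach instead conditions on the endpoint and works entirely with Brownian bridges: you split the bridge at $t/2$, use the closed-form $\P_{a,c}^{(T)}(\min B\ge 0)=1-\ee^{-2ac/T}$ on each half, linearise both factors on the $w\asymp\sqrt t$ scale, and integrate the Gaussian density of the midpoint. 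This is more elementary in that it avoids the Bessel machinery altogether, at the cost of somewhat heavier asymptotic bookkeeping (tracking that $\mu,\ m_t/\sigma=O(\log t)=o(\sqrt t)$ so that $w(w-m_t/\sigma)\sim\tfrac{t}{4}\xi^2$ and the half-line restriction contributes $\tfrac12$). One minor remark: the domination for the $r$-integral does not really need Lemma~\ref{l:small o of 1/t}; since $\kappa$ has compact support it suffices to note that $1-\ee^{-u}\le u$ gives the bridge probability $\le \tfrac{16\tilde b r}{\sigma t^2}\int\phi(w)w(w-m_t/\sigma)_+\,\d w=O(t^{-1})$ uniformly for bounded $r$, which combined with the $t^{-1/2}$ from the endpoint density yields the required $O(t^{-3/2})$ bound.
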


\begin{lemma}\label{l:bessel decomposition}

 Let $F_W$ be the distribution function of $W$,
 where $W$ is the random variable in (\ref{bramson}).
 For any $z\in \r$,
 \begin{eqnarray*}
  &&\lim_{M\to\infty}
     \E\Big[
     {\bf 1}_{\{\sigma B_{s}\ge -x,\,s\in[0,M]\}}
     \ee^{- 2  \int_0^M F_W(z - \sigma B_v) \d v}
     (x+\sigma B_M)\Big]
     \\
  &=& x\, \E_{{x\over \sigma}}
     \Big[
     \ee^{- 2  \int_0^\infty F_W (z + x-\sigma R_v) \d v}
     \Big]
     \\
  &=& \varphi_x(z) ,
 \end{eqnarray*}
 with the notation of (\ref{phi}),
 and where $(R_v)_{v\ge 0}$ is a three-dimensional Bessel process.
\end{lemma}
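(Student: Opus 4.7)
The plan is to proceed in three steps. First, I would perform the shift $\widetilde{B}_s := B_s + x/\sigma$, which moves the starting point to $x/\sigma$, turns the constraint $\{\sigma B_s\ge -x\}$ into $\{\widetilde{B}_s\ge 0\}$, and converts the linear weight $x+\sigma B_M$ into $\sigma\widetilde{B}_M$, and then invoke the classical $h$-transform identity with harmonic function $h(y)=y$,
\[
\E_y\Big[F(B_s,\, s\le M)\,\mathbf{1}_{\{B_s\ge 0,\, s\le M\}}\,B_M\Big] = y\,\E_y\Big[F(R_s,\, s\le M)\Big], \quad y>0,
\]
(where $R$ is a three-dimensional Bessel process from $y$) with $y=x/\sigma$. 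This yields, for each finite $M$,
\[
\E\Big[\mathbf{1}_{\{\sigma B_s\ge -x,\,s\le M\}}\,\ee^{-2\int_0^M F_W(z-\sigma B_v)\d v}(x+\sigma B_M)\Big] = x\,\E_{x/\sigma}\Big[\ee^{-2\int_0^M F_W(z+x-\sigma R_v)\d v}\Big].
\]
The right-hand integrand is non-increasing in $M$ and bounded by $1$; combining the tail estimate $F_W(y)\sim C|y|\ee^y$ as $y\to-\infty$ from (\ref{lalley-sellke}) with $R_v\sim\sqrt{v}$ ensures $\int_0^\infty F_W(z+x-\sigma R_v)\d v<\infty$ almost surely, so monotone convergence delivers the first claimed equality.

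For the second equality I would invoke Williams' decomposition of the three-dimensional Bessel process at its global minimum. Under $\P_{x/\sigma}$, the minimum $M_R:=\inf_{v\ge 0}R_v$ is uniformly distributed on $[0,x/\sigma]$, and conditionally on $M_R=m$ with $\tau:=\inf\{v:R_v=m\}$, the pre-minimum segment $(R_v,\,v\in[0,\tau])$ is distributed as a Brownian motion from $x/\sigma$ killed on hitting $m$, independent of the post-minimum segment $(R_{\tau+s}-m,\,s\ge 0)$, which is a three-dimensional Bessel process from $0$. Setting $b:=x/\sigma-m$ and $\widehat{B}_v:=x/\sigma-R_v$ on $[0,\tau]$, the process $\widehat{B}$ is a Brownian motion from $0$ stopped at its first hitting time of $b$, and the integrand transforms as $z+x-\sigma R_v=z+\sigma\widehat{B}_v$ on $[0,\tau]$ and $z+x-\sigma R_{\tau+s}=z+\sigma b-\sigma\rho_s$ on $[\tau,\infty)$ (writing $\rho$ for the post-minimum Bessel segment). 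Comparing with the definition (\ref{U}) of $\Gamma^{(b)}$, the full path-integral satisfies, conditionally on $M_R=x/\sigma-b$,
\[
\int_0^\infty F_W(z+x-\sigma R_v)\,\d v \;\stackrel{\mathrm{(d)}}{=}\; \int_0^\infty F_W(z+\sigma\Gamma^{(b)}_v)\,\d v.
\]
Integrating over the uniform law of $M_R$ and making the change of variables $b=x/\sigma-m$ then gives $x\,\E_{x/\sigma}[\,\cdots\,] = \sigma\int_0^{x/\sigma}\E[\,\cdots\,]\d b = \varphi_x(z)$.

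The main technical obstacle is the careful invocation of Williams' decomposition and the matching of its two independent segments with the two pieces in the definition of $\Gamma^{(b)}$; the crucial (but elementary) observation is that reflecting a Brownian motion from $x/\sigma$ through its starting point yields a Brownian motion from $0$, with hitting time of $m$ corresponding to hitting time of $b=x/\sigma-m$. Once this correspondence is set up, the remainder is Fubini and a change of variables, with no additional estimate required beyond the tail bound (\ref{lalley-sellke}) used to control the integral at infinity.
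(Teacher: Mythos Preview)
Your proposal is correct and follows essentially the same route as the paper: the shift to start at $x/\sigma$, the $h$-transform to pass to the three-dimensional Bessel process, and then Williams' decomposition at the global minimum with the change of variables $b=x/\sigma-m$ to recover $\varphi_x(z)$. The only cosmetic difference is that you invoke monotone convergence for the $M\to\infty$ limit (noting $M\mapsto\ee^{-2\int_0^M\cdots}$ is non-increasing), whereas the paper cites dominated convergence; both are valid here.
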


Before proving Lemmas \ref{l:conv-kappa} and \ref{l:bessel decomposition}, let us see how we use them to prove Lemma  \ref{l:g2}.

\medskip

\noindent{\it Proof of Lemma \ref{l:g2}.} Recall that $y=z+m_t$ and $(a_s,s\in[0,t])$ is defined in (\ref{def:as}). Take $\zeta>0$ and $w<x+z$ where $x=-a_0$. Let
$$
h_v(r):= G_{\zeta+v}(w + r),
\qquad v\ge 0, \; r\in \r.
$$

\noindent So if we write
\begin{eqnarray}
    I_{(\ref{l:7.1-proof})}
 &:=&t \, \E_{0,{y-w\over \sigma}}^{(t)}
    \Big[{\bf 1}_{\{ \sigma(B_t - B_{t-s}) \ge a_s,\; s\in[0,t] \}}
    \, \ee^{- 2  \int_0^t G_{\zeta+v}(w+\sigma B_v) \d v}
    \Big]
    \nonumber
    \\
 &=&t \, \E_{0,{y-w\over \sigma}}^{(t)}
    \Big[{\bf 1}_{\{ \sigma(B_t - B_{t-s}) \ge a_s,\; s\in[0,t] \}}
    \, \ee^{- 2  \int_0^t h_v(\sigma B_v) \d v}
    \Big] ,
    \label{l:7.1-proof}
\end{eqnarray}

\noindent then we need to check that $\lim_{t\to \infty} I_{(\ref{l:7.1-proof})} = \varphi_x(z) f(w, \, \zeta)$ for some $f(w, \, \zeta)$ such that $f(w, \, \zeta) \sim |w|$ as $w\to -\infty$ and uniformly in $\zeta>0$.

Since $(B_t-B_{t-s}, \, s\in[0,t])$ and $(B_s, \, s\in[0,t])$ have the same distribution under $\P_{0,{y-w\over \sigma}}^{(t)}$, we have
$$
I_{(\ref{l:7.1-proof})}
=
t\, \E_{0,{y-w\over \sigma}}^{(t)}
\Big[{\bf 1}_{\{\sigma B_{s}\ge a_s,\,s\in[0,t]\}} \,
\ee^{- 2  \int_0^t h_{t-v}(y-w- \sigma B_v) \d v}
\Big].
$$

%stretching
%todo %jb bon, c'est pas du tout une consequence directe quand meme; m_v pars a - l'infini en v=0 !!! et c'est pas possible (penser à r=0)
% xxxx Pas de probleme.

Recall from (\ref{eq:stretching}) that
$$
G_v(m_v +r )\le c\, (|r|+1) \ee^{r},
$$

\noindent for some constant $c>0$, and any $v>0$ and $r\in \r$.  Therefore, there exists a constant $c_{x,z}$, depending on $(x, \, z)$, such that $h_v(m_v+r) \le c_{x,z} (|r|+1)\ee^r$. Thus, on the event $\{ \sigma B_s> \min(s^{1/3},\, m_t +(t-s)^{1/3}), \, \forall  s\in[M,t-M] \}$, we have $\int_{M}^{t-M}h_{t-v}(y - B_v) \d v  \le \varepsilon(M)$ for any $t>1$, where $\varepsilon(M)$ is deterministic and statisfies $\lim_{M\to\infty} \varepsilon(M)=0$.

On the other hand recall from Lemma \ref{l:small o of 1/t} that
$$
\P_{0,{y-w\over \sigma}}^{(t)}\Big( \sigma B_{s}\ge a_s, \, s\in[0,t], \; \exists s\in[M,t-M]: \, \sigma B_s< \min(s^{1/3}, m_t +(t-s)^{1/3})\Big)
=
{1\over t} o_M(1),
$$

\noindent in the sense that $\limsup_{t\to\infty} t\P_{0,{y-w\over \sigma}}^{(t)}(\ldots)=o_M(1)$, where, as before, $o_M(1)$ designates an expression which converges to 0 as $M \to \infty$.
Therefore, we see that
\begin{eqnarray*}
    \lim_{t \to \infty} I_{(\ref{l:7.1-proof})}
 &=& \lim_{M \to \infty} \lim_{t\to \infty}
    t \, \E_{0,{y-w\over \sigma}}^{(t)}
    \Big[ {\bf 1}_{\{\sigma B_{s}\ge a_s,\; s\in[0,t]\}} \,
    {\bf 1}_{\{ \sigma B_{t-M}-a_t \in[M^{1/3},\, M^{2/3}]\}} \times
    \\
 &&\qquad\qquad\qquad \times
    \ee^{- 2 \int_{[0,M]\cup[t-M,t]} h_{t-v}(y-w - \sigma B_v) \d v}
    \Big].
\end{eqnarray*}

\noindent Define
\begin{eqnarray}
    \kappa_M(r)
 &:=&
    {\bf 1}_{\{ r\in[M^{1/3}, \, M^{2/3}]\}} \,
    \ee^{-{(x+z-w-r)^2\over 2\sigma^2M}} \times
    \nonumber
    \\
 && \qquad \times
    \E_{{r\over \sigma},{x+z-w\over \sigma}}^{(M)}
    \Big[ {\bf 1}_{\{ \min_{[0,M]} B>0 \}}\,
    \ee^{- 2 \int_0^M h_{M-v}(x+z-w - \sigma B_v) \d v} \Big] .
    \label{def:kappa}
\end{eqnarray}

\noindent By the Markov property (applied at time $t-M$, and then at time $M$ for the second identity), we get, for $t\to\infty$,
\begin{eqnarray}
 && t\, \E_{0,{y-w\over \sigma}}^{(t)}
    \Big[ {\bf 1}_{\{\sigma B_{s}\ge a_s,\, s\in[0,\, t]\}} \,
    {\bf 1}_{\{\sigma B_{t-M}-a_t \in [M^{1/3},\, M^{2/3}]\}} \,
    \ee^{- 2 \int_{[0,M]\cup[t-M,t]} h_{t-v}(y-w - \sigma B_v) \d v}
    \Big]
    \nonumber
    \\
 &\sim&{t^{3/2} \over M^{1/2}} \,
    \E_0 \Big[{\bf 1}_{\{\sigma B_{s}\ge a_s,\, s\in[0,\, t-M]\}} \,
    \ee^{- 2  \int_0^M h_{t-v}(y-w - \sigma B_v) \d v}
    \kappa_M(\sigma B_{t-M}-a_t) \Big]
    \nonumber
    \\
 &=&{t^{3/2} \over M^{1/2}} \,
    \E_0 \Big[{\bf 1}_{\{\sigma B_{s}\ge a_s,\, s\in[0,\,M]\}} \, 
    \ee^{- 2  \int_0^M h_{t-v}(y-w - \sigma B_v) \d v} \times
    \nonumber
    \\
 &&\qquad\qquad \times
    \E_{B_M}\Big(
    {\bf 1}_{\{\sigma B_{s}\ge a_{M+s},\, s\in[0, \, t-2M]\}}
    \kappa_M(\sigma B_{t-2M}-a_t) \Big) \Big] .
    \label{eq:Mt-M}
\end{eqnarray}

\noindent By Lemma \ref{l:conv-kappa}, almost surely,
$$
\lim_{t\to\infty}t^{3/2} \, \E_{B_M} \Big( {\bf 1}_{\{\sigma B_{s}\ge a_{M+s},\, s\in[0,t-2M]\}} \kappa_M(\sigma B_{t-2M}-a_t) \Big)
=
{x+ \sigma B_M \over 2\sqrt{\pi}} \int_{\r_+} r\kappa_M(r) \d r.
$$

\noindent On the other hand, $h_s(m_s + r)=G_{\zeta+s}(m_s+w+r) \to F_W(w+r)$ as $s\to \infty$ (see (\ref{bramson})). Hence, almost surely,
$$
\lim_{t\to\infty} \ee^{- 2 \int_0^M h_{t-v}(y-w - \sigma B_v) \d v}
=
\ee^{- 2  \int_0^M F_W(z - \sigma B_v) \d v}.
$$

\noindent In view of the Brownian motion sample path probability bound given in (\ref{conv-dominee}), below, we are entitled to use dominated convergence to take the limit $t\to\infty$ in (\ref{eq:Mt-M}):
\begin{eqnarray*}
 &&\lim_{t\to\infty} t\, \E_{0,{y-w\over \sigma}}^{(t)}
    \Big[{\bf 1}_{\{ \sigma B_{s}\ge a_s,\,s\in[0,t]\} } \,
    {\bf 1}_{\{\sigma B_{t-M}-a_t \in[M^{1/3}, \, M^{2/3}]\}} \,
    \ee^{- 2 \int_{[0,M]\cup[t-M,t]} h_{t-v}(y-w -\sigma B_v) \d v}
    \Big]
    \\
 &=& \E\Big[ {\bf 1}_{\{\sigma B_{s}\ge -x,\, s\in[0,M]\}} \,
    \ee^{- 2 \int_0^M F_W(z- \sigma B_v) \d v}
    (x+\sigma B_M) \Big]
    {1\over 2(M\pi)^{1/2}} \int_{\r_+} r\kappa_M(r) \d r.
\end{eqnarray*}

\noindent By Lemma \ref{l:bessel decomposition},
$$
\lim_{M\to\infty} \E\Big[{\bf 1}_{\{\sigma B_{s}\ge -x,\,s\in[0,M]\}} \, \ee^{- 2 \int_0^M F_W(z - \sigma B_v) \d v}(x+\sigma B_M)\Big]
=
\varphi_x(z) ,
$$

\noindent with the notation of (\ref{phi}). So it remains to check that
\begin{equation}
    \lim_{M\to \infty}
    {1\over 2(M\pi)^{1/2}} \int_{\r_+} r\kappa_M(r) \d r
    =
    f(w, \, \zeta),
    \label{limitM}
\end{equation}

\noindent for some $f(w, \, \zeta)$ such that $f(w, \, \zeta) \sim |w|$ as $w\to -\infty$ and uniformly in $\zeta>0$.

Recalling the definition of $\kappa_M$ in (\ref{def:kappa}), we have
\begin{eqnarray*}
 &&\int_{\r_+} r\kappa_M(r) \d r
    \\
 &=&\int_{M^{1/3}}^{M^{2/3}}  r\,
    \ee^{-{(z-w+x-r)^2\over 2\sigma^2 M}}
    \E_{{r\over \sigma},{x+z-w\over \sigma}}^{(M)}
    \Big[ {\bf 1}_{\{ \min_{[0,M]} B>0 \}}
    \ee^{- 2  \int_0^M h_{M-v}(x+z-w - \sigma B_v) \d v}\Big] \d r
    \\
 &=&\int_{M^{1/3}}^{M^{2/3}}  r\,
    \ee^{-{(z-w+x-r)^2\over 2\sigma^2 M}}
    \E_{{x+z-w\over \sigma},{r\over \sigma}}^{(M)}
    \Big[ {\bf 1}_{\{ \min_{[0,M]} B>0 \}}
    \ee^{- 2  \int_0^M h_v (x+z-w - \sigma B_v) \d v}\Big] \d r
    \\
 &=& \sigma (2\pi M)^{1/2} \, \E_{{x+z-w\over \sigma}}
    \Big[ \sigma B_M \,
    {\bf 1}_{\{ \sigma B_M \in [M^{1/3}, \, M^{2/3}]\} } \,
    {\bf 1}_{\{ \min_{[0,M]} B>0 \}}
    \ee^{- 2  \int_0^M h_v (x+z-w - \sigma B_v) \d v}\Big] ,
\end{eqnarray*}

\noindent which, by the $h$-transform of the Bessel process, is
$$
=
\sigma (2\pi M)^{1/2}\, (x+z-w)\, \E_{{x+z-w\over \sigma}} \Big[ \ee^{- 2  \int_0^M h_v (x+z-w-\sigma R_v) \d v}{\bf 1}_{\{\sigma R_M \in [M^{1/3}, \, M^{2/3}]\}} \Big].
$$

\noindent Dominated convergence implies that
\begin{eqnarray*}
    \lim_{M\to\infty} {1\over M^{1/2}}
    \int_{\r_+} r\kappa_M(r) \d r
 &=& \sigma (2\pi)^{1/2}\, (x+z-w)\,
    \E_{{x+z-w\over \sigma}}
    \Big[ \ee^{- 2  \int_0^\infty h_v (x+z-w-\sigma R_v) \d v} \Big]
    \\
 &=& \sigma (2\pi)^{1/2}\, (x+z-w) \,
    \E_{{x+z-w\over \sigma}}
    \Big[ \ee^{- 2 \int_0^\infty G_{\zeta+v} (x+z-\sigma R_v) \d v} \Big].
\end{eqnarray*}

\noindent This yields (\ref{limitM}) with
$$
f(w,\zeta)
:=
(x+z-w)\, \E_{{x+z-w\over \sigma}} \Big[\ee^{- 2  \int_0^\infty G_{\zeta+v}(x+z- \sigma R_v) \d v} \Big] ,
$$

\noindent and thus the first part of Lemma \ref{l:g2}. It remains to check that $f(w,\zeta)\sim |w|$ as $w\to-\infty$, uniformly in $\zeta>0$. We only have to show that, uniformly in $\zeta>0$,
$$
\lim_{w\to-\infty} \E_{{x+z-w\over \sigma}} \Big[ \ee^{- 2  \int_0^\infty G_{\zeta+v}(x+z-\sigma R_v) \d v}\Big]
=
1 .
$$
Using again (\ref{eq:stretching}), $G_v(m_v +r )\le c\, (|r|+1) \ee^{r}$ for any $v\ge 0$ and $r\in\r$, we have that
$$
\int_0^\infty G_{\zeta+v}(x+z-R_v) \d v \le \ee^{x+z}\int_0^\infty \ee^{-R_v} \d v ,
$$

\noindent and we conclude by $\lim_{r\to\infty} \E_{r}[\ee^{- c \int_0^\infty \ee^{-R_v} \d v}]=1$ for any fixed $c>0$.
\hfill$\Box$

\medskip

The rest of the section is devoted to the proof of Lemmas \ref{l:conv-kappa} and \ref{l:bessel decomposition}.

\medskip

\noindent{\it Proof of Lemma  \ref{l:conv-kappa}}.  For any $a$, $\eta>0$, we have by (\ref{joint-min-Bt})
$$
\P_a\Big( \min_{[0,t]}\sigma B_s>0,\; \sigma B_t \in \!\d\eta\Big)
=
\Big( {2\over \pi \sigma^2 t}\Big)^{\! 1/2} \, \ee^{-{\sigma^2 a^2+\eta^2\over 2\sigma^2 t}} \sinh \Big({\eta a\over \sigma t}\Big) \d \eta .
$$

\noindent In particular, if ${a\eta\over t}\to 0$ as $t\to\infty$, we have (recalling that $\sigma^2 =2$)
$$
\P_a\Big( \min_{[0,t]}\sigma B_s>0,\; \sigma B_t \in \!\d\eta\Big)
\sim
{1\over \sqrt{2\pi}} \, {a\eta\over t^{3/2}} \, \ee^{-{\sigma^2 a^2+\eta^2\over 2\sigma^2 t}} \d\eta.
$$

\noindent Fix $\eta>0$. By the Markov property at time ${t\over 2}$, and using the fact that ${B_{{t\over 2}}}$ is of order $t^{1/2}$, we have, for $t\to \infty$,
\begin{eqnarray*}
 && \P_b\Big( \{ \sigma B_s\ge a_s, \; s\in[0,t] \}
    \cap
    \{ \sigma B_t\in a_t + \!\d \eta\} \Big)
    \\
 &=& \E_b\Big[
    {\bf 1}_{\{ \sigma B_s\ge -x, \; s\in[0,\, {t\over 2}] \}} \,
    \P_{B_{{t\over 2}}- {a_t\over \sigma} }
    \Big(\min_{[0,\, {t\over 2}]}B_s>0,\,
    \sigma B_{{t\over 2}} \in \!\d \eta \Big) \Big]
    \\
 &\sim& {2\over \sqrt{\pi}}{\eta\over t^{3/2}}
    \E_b\Big[ {\bf 1}_{\{ \sigma B_s\ge -x,\, s\in[0,\, {t\over 2}] \}}
    \, B_{{t\over 2}} \, \ee^{-{B_{t/2}^2\over t}}\Big] \d\eta.
\end{eqnarray*}

\noindent Going from the killed Brownian motion to the three-dimensional Bessel process, we see that, as $t\to\infty$,
$$
\E_b\Big[ {\bf 1}_{\{ \sigma B_s\ge -x,\, s\in[0,\, {t\over 2}] \}}
\, B_{{t\over 2}} \, \ee^{-{B_{t/2}^2\over t}}\Big]
\sim
(b+ {x\over \sigma}) \, \E_0\Big[ \ee^{-{R_{t/2}^2\over t}}\Big]
=
2^{-3/2} (b+ {x\over \sigma}) .
$$

\noindent Hence,
$$
\P_b\Big( \{ \sigma B_s\ge a_s, \; s\in[0,t] \} \cap \{ \sigma B_t\in a_t + \!\d \eta\} \Big)
\sim
{\sigma b + x\over 2\sqrt{\pi}} \, {\eta\over t^{3/2}} \d \eta.
$$

\noindent To complete the proof, we have to use dominated convergence. It is enough to show that (recalling that the function $\kappa$ is bounded with compact support) for any $K>0$,
\begin{equation}
    \label{conv-dominee}
    \sup_{t\ge 1} t^{3/2} \,
    \P_b\Big( \{B_s\ge a_s,s\in[0,t]\} \cap \{B_t-a_t \le K\} \Big)
    <\infty.
\end{equation}

\noindent This can easily be deduced from (\ref{joint-min-Bt}).
\hfill$\Box$

\medskip

\noindent{\it Proof of Lemma  \ref{l:bessel decomposition}.} We have
\begin{eqnarray*}
   &&\E\Big[
     {\bf 1}_{\{\sigma B_{s}\ge -x,\,s\in[0,M]\}}
     \ee^{- 2  \int_0^M F_W(z - \sigma B_v) \d v}
     (x+\sigma B_M)\Big]
    \\
 &=&\E_{{x\over \sigma}}\Big[
    {\bf 1}_{\{\sigma B_{s}\ge 0,\,s\in[0,M]\}}
    \ee^{- 2  \int_0^M F_W(z+x-\sigma B_v) \d v}
    \sigma B_M\Big]
    \\
 &=&x\E_{{x\over \sigma}}\Big[
    \ee^{- 2  \int_0^M F_W(z+x-\sigma R_v) \d v} \Big] ,
\end{eqnarray*}

\noindent giving the first identity by dominated convergence. To prove the second identity, we recall the following well known path decomposition for the three-dimensional Bessel process $R$: under $\P_{{x\over \sigma}}$, $\inf_{s\ge 0} R_s$ is uniformly distributed in $(0, \, {x\over \sigma})$. Furthermore, if we write $\nu := \inf\{ s\ge 0: \, R_\nu = \inf_{s\ge 0} R_s\}$, the location of the minimum, then conditionally on $\inf_{s\ge 0} R_s = r \in (0, \, {x\over \sigma})$, the pre-minimum path $({x\over \sigma} - R_s, \, s\in [0, \, \nu])$ and the post-minimum path $(R_{s+\nu} - r, \, s\ge 0)$ are independent, the first being Brownian motion starting at $0$ and killed when hitting ${x\over \sigma} -r$ for the first time, and the second a three-dimensional Bessel process starting at $0$. Accordingly,
$$
x\E_{{x\over \sigma}}\Big[ \ee^{- 2  \int_0^M F_W(z+x-\sigma R_v) \d v} \Big]
=
x \int_0^{{x\over \sigma}} {\sigma\over x} \d r\, \E\Big[ \ee^{-2 \int_0^{T_{{x\over \sigma}-r}} F_W(z+\sigma B_s) \d s - 2\int_0^\infty F_W(z+x-\sigma r - \sigma R_s) \d s} \Big],
$$

\noindent where, as before, the three-dimensional Bessel process $R$ and the Brownian motion $B$ are assumed to be independent, and $T_b := \inf\{ s\ge 0: \, B_s=b\}$ for $b\in \r$. By a change of variables $b:= {x\over \sigma}-r$, we see that the expression on the right-hand is
$$
=
\sigma \int_0^{{x\over \sigma}}
\E\Big[ \ee^{-2 \int_0^{T_b} F_W(z+\sigma B_s) \d s - 2\int_0^\infty F_W(z+ \sigma b - \sigma R_s) \d s} \Big] \d b ,
$$

\noindent which is $\varphi_x(z)$ in (\ref{phi}).\hfill$\Box$

%%%%%%%%%%%%%%%%%%%%%%%%%%%%%%%%%%%%%%%%%%%%%%%%%%%
%%%%%%%%%%%%%                                                                 %%%%%%%%%%%%%%%%%
%%%%%%%%%%%%%           Formerly main steps                      %%%%%%%%%%%%%%%%%
%%%%%%%%%%%%%                                                                 %%%%%%%%%%%%%%%%%
%%%%%%%%%%%%%%%%%%%%%%%%%%%%%%%%%%%%%%%%%%%%%%%%%%%

%%%%%%%%%%%%%%%%%%%%%%%%%%%%%%%%%%%%%%%%%%%%%%%%%%%%
%%%%%%%%%%%%%%%%%%%%%%%%%%%%%%%%%%%%%%%%%%%%%%%%%%%%

\section{Proof
of Theorem \ref{t:main}}\label{S:proof of thm main}

%%%%%%%%%%%%%%%%%%%%%%%%%%%%%%%%%%%%%%%%%%%%%%%%%%%%
%%%%%%%%%%%%%%%%%%%%%%%%%%%%%%%%%%%%%%%%%%%%%%%%%%%%

In this section, we prove Theorem \ref{t:main}. The key result  is  Theorem \ref{T: structure de Q}. The ingredients needed in addition are Proposition \ref{P:convergence jointe vers P et Z} which explains the appearance of the point measure $\sP$, and Proposition \ref{P: no branching at intermediate times} which shows that particles sampled near $X_1(t)$ either have a very recent common ancestor or have branched at the very beginning of the process.  This last result has been first proved by Arguin et al.\ in \cite{ABK1}.

We employ a very classical approach: we stop the particles when they reach an increasing family of affine stopping lines and then consider their descendants independently. The same kind of argument with the same stopping lines appear in \cite{kyprianou} and in \cite{elie}.

\medskip

Fix $k \ge 1$ and consider  $\sH_k$ the set of all particles which are the first in their line of descent to hit the spatial position $k.$ (For the formalism of particle labelling, see Neveu~\cite{neveu88}.) Under the conditions we work with, we know that almost surely $\sH_k$ is a finite set. The set  $\sH_k$ is again a {\it dissecting stopping line} at which we can apply the  the strong Markov property (see e.g. \cite{chauvin91}). We see that conditionally on $\mathscr{F}_{\!\!\mathscr{H}_k} $ 
%jb2
--- the sigma-algebra generated by the branching Brownian motion when the particles are stopped upon hitting the position $k$ --- the subtrees rooted at the points of $\sH_k$ are independent copies of the
branching Brownian motion
started at position $k$ and at the random time
at which the particle considered has hit $k$. Define $H_k := \# \mathscr{H}_k$ and 
$$
Z_k:= k \ee^{- k}  H_k.
$$

Neveu (\cite{neveu88}, equation (5.4)) shows that the limit $Z$ of the derivative martingale in  (\ref{E:def de Z}) can also be obtained as a limit of $Z_k$ (it is the same martingale on a different stopping line)
\begin{equation}
    Z
    = \lim_{k \to \infty } Z_k =
    \lim_{k\to \infty} k \ee^{ -k} H_k 
    \label{neveu}
\end{equation}
almost surely. 
% elie:j'ai dŽplacŽ le paragraphe prŽcŽdent
Let us further define $\sH_{k,t}$ as the set of all particles which are the first in their line of descent to hit the spatial position $k$, and which do so before time $t$.

%%%%%%%%%%%%%%%%%%%%%%%%%%%%%%%%%%%%%%%%%%%%%%%%%%%
%%%%%%%%%%%%%                                                               %%%%%%%%%%%%%%%%%
%%%%%%%%%%%%%           premiere proposition                   %%%%%%%%%%%%%%%%%
%%%%%%%%%%%%%                                                               %%%%%%%%%%%%%%%%%
%%%%%%%%%%%%%%%%%%%%%%%%%%%%%%%%%%%%%%%%%%%%%%%%%%%

For each  $u\in \mathscr{H}_{k,t}$, let us write $X_1^u (t)$ for the minimal position at time $t$ of the particles which are descendants of $u$. If $u \in \sH_k \backslash \mathscr{H}_{k,t}$ we define $X_1^u(t) =0.$ This allows us to define the point measure
$$
 \mathscr{P}^*_{k,t} := \sum_{ u\in \mathscr{H}_k} \delta_{X_1^u (t) - \, m_t + \log (CZ_k)}.
$$

We further define 
$$
\sP^*_{k,\infty}
:=
\sum_{u \in \sH_k} \delta_{ k+ W(u) + \log (CZ_k)}
$$
where, conditionally on $\sF_{\sH_k}$, the $W(u)$ are
independent copies
of the
random variable $W$
in (\ref{bramson}). 
\begin{proposition}\label{P:convergence jointe vers P et Z}
The following convergences hold in distribution
$$
\lim_{t \to \infty} \sP^*_{k,t}
=
\sP^*_{k,\infty}
$$
and
$$
\lim_{k \to \infty} (  \mathscr{P}^*_{k,\infty} , Z_k) = (\sP, Z)
$$
where $\sP$ is as in Theorem \ref{t:main}, $Z$ is as in (\ref{E:def de Z1}), and $\sP$ and $Z$ are independent.
\end{proposition}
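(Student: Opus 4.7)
For the first convergence, I invoke the strong Markov property at the dissecting stopping line $\sH_k$. The positive drift $\varrho = 2$ ensures that every line of descent eventually crosses level $k$, and Neveu's result gives $H_k < \infty$ a.s., so $\sH_{k,t} = \sH_k$ for $t$ large enough (depending on $\omega$ and $k$). Conditionally on $\sF_{\!\mathscr{H}_k}$, the subtrees rooted at $u \in \sH_k$ are independent BBMs started from $k$ at the hitting times $\tau_u$; hence $X_1^u(t) = k + \widehat{X}_1^u(t - \tau_u)$ with $\widehat{X}_1^u$ an independent BBM. Bramson's theorem \eqref{bramson}, together with $m_t - m_{t - \tau_u} \to 0$ at each fixed $\tau_u$, gives $X_1^u(t) - m_t \to k + W(u)$ in law for each of the finitely many $u \in \sH_k$. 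Joint convergence on this finite family, followed by the addition of the $\sF_{\!\mathscr{H}_k}$-measurable shift $\log(CZ_k)$, yields $\sP^*_{k,t} \Rightarrow \sP^*_{k,\infty}$.

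For the second convergence, Neveu's identity \eqref{neveu} supplies $Z_k \to Z$ almost surely; it remains to show that, jointly, $\sP^*_{k,\infty} \Rightarrow \sP$ with $\sP$ independent of $Z$. Conditionally on $\sF_{\!\mathscr{H}_k}$, the measure $\sP^*_{k,\infty}$ is the superposition of $H_k$ i.i.d.\ atoms with common law $F_W(\,\cdot\, - k - \log(CZ_k))$. For any $y \in \r$,
\[
\E\bigl[\sP^*_{k,\infty}((-\infty, y]) \,\big|\, \sF_{\!\mathscr{H}_k}\bigr] = H_k\, F_W\bigl(y - k - \log(CZ_k)\bigr).
\]
Plugging in $H_k = Z_k \ee^k / k$ and the Lalley--Sellke tail $F_W(z) \sim C|z|\ee^z$ as $z \to -\infty$ from \eqref{lalley-sellke}, the factor $\ee^{-k}/(CZ_k)$ inside $F_W(y - k - \log(CZ_k))$ cancels against $Z_k \ee^k / k$, and the residual $(k + \log(CZ_k) - y)/k$ tends to $1$ almost surely on $\{Z>0\}$. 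Hence the conditional mean converges a.s.\ to $\ee^y = \int_{-\infty}^y \ee^z \, \d z$, which is the $\sP$-mean of $(-\infty, y]$.

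Since $H_k \to \infty$ and the individual atom-in-$(-\infty,y]$ probability tends to zero, the Binomial law of $\sP^*_{k,\infty}((-\infty,y])$ conditional on $\sF_{\!\mathscr{H}_k}$ converges to a Poisson law of parameter $\ee^y$; applying the same reasoning to a finite family of disjoint intervals and invoking the Kallenberg criterion recalled in Section~\ref{S:main} gives convergence of the full conditional law of $\sP^*_{k,\infty}$ to that of $\sP$. This is exactly the classical record/extreme-value description of the Gumbel domain alluded to in Section~\ref{S:discut}. The conditional limit being deterministic and $Z_k \to Z$ a.s., we conclude $(\sP^*_{k,\infty}, Z_k) \Rightarrow (\sP, Z)$ with $\sP \perp Z$. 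The main technical obstacle is promoting the pointwise Lalley--Sellke equivalence to a statement uniform enough in $y$ to control the corresponding conditional Laplace functionals of test functions with compact support; the requisite domination comes from upper bounds of the type \eqref{eq:stretching} passed to the limit $F_W$, which provide a sub-exponential envelope on the whole line and justify dominated convergence.
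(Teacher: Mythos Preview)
Your argument is correct and follows essentially the same route as the paper. The first part is identical: strong Markov at the stopping line, Bramson's theorem applied to each of the finitely many $u\in\sH_k$, and $m_t-m_{t-\tau_u}\to 0$. For the second part, both you and the paper reduce to classical extreme-value theory for $H_k$ i.i.d.\ copies of $W$: the paper cites Resnick's Corollary~4.19 with the standard centering $b_n=\log n+\log\log n$ and then verifies $b_{H_k}+\log C = k+\log(CZ_k)+o_k(1)$ via Neveu's identity, whereas you compute the conditional mean $H_k\,F_W(y-k-\log(CZ_k))\to \ee^y$ by hand and pass through the Binomial-to-Poisson limit. These are the same argument in slightly different packaging; the independence of the limit from $Z$ follows in both cases because the conditional (on $\sF_{\sH_k}$) limit law is deterministic.

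One small remark: the ``main technical obstacle'' you flag --- uniformity in $y$ of the Lalley--Sellke asymptotic --- is not actually needed. The Kallenberg criterion recalled in Section~\ref{S:main} only asks for convergence of $(\mu_k(A_1),\ldots,\mu_k(A_n))$ for each \emph{fixed} finite family of intervals, so the pointwise tail $F_W(z)\sim C|z|\ee^z$ at finitely many shifted arguments suffices. You can drop the appeal to \eqref{eq:stretching} and the Laplace-functional domination entirely.
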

%The proof is included here for self-containedness.
%referee asks to say classical
%todo
\begin{proof}

Fix $k \ge 1$. Recall that  $\sH_k$ is the set of particles absorbed at level $k$, and $H_k=\#\sH_k$. 
Observe that for each  $u\in \mathscr{H}_k$,  $X_1^u (t)$ has the same distribution as  $k+X_1(t- \xi_{k,u})$, where $\xi_{k,u}$ is the random time at which $u$ reaches $k$. By (\ref{bramson}) and the fact that $m_{t+c}-m_t \to 0$ for any $c$, we have, for all  $k\ge 1$ and all $u\in \mathscr{H}_k$,
$$
X_1^u (t) -  m_t
\;\; {\buildrel law \over \to} \;\;
k+  W, \qquad  t \to \infty .
$$

\noindent Hence, the finite point measure $\mathscr{P}_{k,t} := \sum_{u \in \sH_k} \delta_{ X_1^u (t) -  m_t}$ converges in distribution as  $t\to \infty$, to $\mathscr{P}_{k,\infty}:= \sum_{u \in \sH_k} \delta_{ k+W(u)}$,
where conditionally on  $\mathscr{H}_k$, the $W(u)$ are
independent copies
of $W$. This proves the first part of Proposition \ref{P:convergence jointe vers P et Z}.

\bigskip

The proof of the second part relies on some classical
extreme value theory.
We refer the reader to \cite{resnick} for a thorough treatment of this subject. Let us state the result we will use. Suppose we are given a sequence $(X_i, i \in \N)$ of i.i.d.\ random variables such that
$$
\P(X_i \ge x) \sim Cx \ee^{- x}, \text{ as } x \to \infty.
$$
Call $M_n = \max_{i=1,\ldots, n} X_i$ the record of the $X_i$. Then it is not hard to see that if we let $b_n = \log n + \log \log n$ we have as $n\to\infty$
\begin{eqnarray*}
\P(M_n -b_n \le y ) &= & (\P(X_i \le y+b_n))^n \\
&= & (1-(1+o(1))C (y+b_n)\ee^{-(y+b_n)})^n \\
&\sim&  \exp \Big( -nC (y+ b_n) \frac1{n\log n} \ee^{- y} \Big) \\
&\sim& \exp(-C\ee^{- y})
\end{eqnarray*}
and therefore
\begin{align*}
\P \left( M_n -b_n - \log C \le y \right)
&\sim \exp(-\ee^{-y}).
\end{align*}

By applying Corollary 4.19 in \cite{resnick} we immediately see that the point measure
$$
\zeta_n := \sum_{i=1}^n \delta_{X_i - b_n - \log C  }
$$
converges in distribution to a Poisson point measure on $\r$ with intensity $\ee^{-x}\d x.$

This result
applies immediately to the
random variables $-W(u)$ (recalling from (\ref{lalley-sellke}) that $\P( -W \ge x) \sim C x \ee^{-x}$, $x\to \infty$) and thus the point measure
$$
\sum_{u \in \sH_k} \delta_{W(u) + ( \log H_k +\log \log H_k + \log C) }
$$
converges
(as $k \to  \infty$) in distribution towards a Poisson point measure on $\r$ with intensity $\ee^{ x }\d x$ (it is $\ee^x$ instead of $\ee^{-x}$ because we are looking at the leftmost particles) independently of $Z$ (this identity comes from (\ref{neveu})). By definition $H_k =  k^{-1} \ee^{ k } Z_k$, thus
\begin{align*}
\log H_k &=   k +\log Z_k - \log k  \\
\log \log H_k &= \log k +\log (1+o_k(1))
\end{align*}
where the term $o_k(1)$ tends to 0 almost surely when $k\to \infty$.
Hence,
$$
\log H_k + \log \log H_k =  \log Z_k +  k + o_k(1).
$$
We conclude that for $u \in \sH_k$
\begin{align*}
k+  W(u) +  \log (C Z) &=  W(u) +  (\log H_k + \log \log H_k +\log C ) + o_k(1).
\end{align*}
Hence we conclude that
$$
\sP^*_{k,\infty}= \sum_{u \in \sH_k} \delta_{k+  W(u) +  \log (C Z)}
$$
also converges (as $k \to  \infty$) towards a Poisson point measure on $\r$ with intensity $\ee^{ x}\d x$ independently of $Z = \lim_k Z_k.$ This concludes the proof of Proposition \ref{P:convergence jointe vers P et Z}.%\hfill$\Box$
\end{proof}

\medskip

%%%%%%%%%%%%%%%%%%%%%%%%%%%%%%%%%%%%%%%%%%%%%%%%%%%
%%%%%%%%%%%%%                                                               %%%%%%%%%%%%%%%%%
%%%%%%%%%%%%%         2eme proposition                          %%%%%%%%%%%%%%%%%
%%%%%%%%%%%%%                                                               %%%%%%%%%%%%%%%%%
%%%%%%%%%%%%%%%%%%%%%%%%%%%%%%%%%%%%%%%%%%%%%%%%%%%

Recall that $J_\eta(t) := \{ i \le N(t) : |X_i(t)-m_t |\le \eta \}$ is the set of indices which correspond to particles  near $m_t$ at time $t$   and that  $\tau_{i,j}(t)$  is the time at which the particles $X_i(t)$ and $X_j(t)$ have branched from one another. 

\begin{proposition}\label{P: no branching at intermediate times}
{\rm (Arguin, Bovier and Kistler~\cite{ABK1})}
Fix $\eta >0$ and any function $\zeta : [0,\infty ) \to [0,\infty) $ which increases to infinity. Define the event
$$
\mathcal B_{\eta,k,t}:= \left\{ \exists i,j \in J_\eta(t) : \tau_{i,j}(t) \in [\zeta(k) , t-\zeta(k)] \right\}.
$$
One has
\begin{equation}
\lim_{k \to \infty} \lim_{t \to \infty} \P \left[  \mathcal B_{\eta,k,t}  \right] =0.
\end{equation}
\end{proposition}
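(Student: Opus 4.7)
The proof is a first-moment argument on pairs of particles with intermediate common ancestor, driven by the path-localization estimates of Proposition~\ref{T: prop de X_1}. Since $\P(A_t(x,\eta)^\complement) < \varepsilon$ for $x$ large by Proposition~\ref{T: prop de X_1}, it suffices to bound $\P(\mathcal{B}_{\eta,k,t} \cap A_t(x,\eta))$. On $A_t(x,\eta)$, any pair $i,j \in J_\eta(t)$ with common ancestor at time $s = \tau_{i,j}(t) \in [\zeta(k), t-\zeta(k)]$ has the common-ancestor position $X_{i,t}(s) = X_{j,t}(s)$ confined to a narrow band: at least $s^{1/3}$ for $s \in [x, t/2]$, and in the slab $[m_t + (t-s)^{1/3}, m_t + (t-s)^{2/3}]$ (up to $O(\eta)$) for $s \in [t/2, t-x]$; moreover both subpaths on $[s,t]$ are localized in the tube prescribed by $E_2 \cap E_3$.

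Next, I would bound $\P(\mathcal{B}_{\eta,k,t} \cap A_t(x,\eta)) \le \E[\mathcal{N}]$, where $\mathcal{N}$ counts these constrained pairs. Decomposing $\mathcal{N}$ over the branching event via the branching property, and exploiting the independence of the two subtrees after the split, one obtains an estimate of the form
\[
\E[\mathcal{N}] \le C \int_{\zeta(k)}^{t-\zeta(k)} ds \int_{D(s)} dy\; q_{\mathrm{anc}}(s,y)\, \bigl[q_{\mathrm{des}}(s,y,t)\bigr]^{2},
\]
where $D(s)$ is the admissible region for the common-ancestor position, $q_{\mathrm{anc}}(s,y)$ is a many-to-one density of particles at position $y$ at time $s$ whose ancestral path satisfies the pre-split constraints $E_1\cap E_2$, and $q_{\mathrm{des}}(s,y,t)$ is the expected number of descendants of a BBM started at $(s,y)$ which end in $[m_t-\eta,m_t+\eta]$ and whose trajectory satisfies the post-split constraint of $E_3$. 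Each of $q_{\mathrm{anc}}$ and $q_{\mathrm{des}}$ would be estimated via a many-to-one computation in the spirit of Theorem~\ref{T: path of X_1}, refined by Brownian-bridge estimates in the vein of Lemma~\ref{l:small o of 1/t} (to account for the tube localization) and by the Lalley--Sellke tail $\P(W \le -u) \sim Cu\ee^{-u}$ from (\ref{lalley-sellke}) for the terminal constraint.

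Once these bounds are substituted, the lower-bound constraint $y \ge s^{1/3}$ (resp.\ $y-m_t \ge (t-s)^{1/3}$) on $D(s)$ produces a stretched-exponential factor $\ee^{-c s^{1/3}}$ (resp.\ $\ee^{-c(t-s)^{1/3}}$) in the integrand, so the $s$-integral over $[\zeta(k), t/2]$ (and symmetrically over $[t/2, t-\zeta(k)]$) is integrable and tends to $0$ as $\zeta(k) \to \infty$, uniformly in $t$. The main technical obstacle is the sharpening of $q_{\mathrm{des}}$ (and of $q_{\mathrm{anc}}$): a purely unconstrained many-to-one bound on the number of descendants near $m_t$ is polynomially large in $t-s$, which would let the first-moment estimate diverge because the density of particles near $m_t$ at time $t$ is itself of order $t$. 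It is precisely the Bessel/Brownian-bridge refinement supplied by the localization tube, analogous to the one used in Section~\ref{s:proof-prop2.5}, that provides the extra polynomial decay needed to make the joint estimate summable.
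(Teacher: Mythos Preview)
Your strategy is sound but takes a harder route than the paper's. You propose a many-to-two computation: count pairs $(i,j)$, decompose over the branch point $(s,y)$, and bound by $\int q_{\mathrm{anc}}(s,y)\,[q_{\mathrm{des}}(s,y,t)]^2\,ds\,dy$. The paper instead does a many-to-\emph{one} on the single particle $i$ (the spine under $\Q$), and handles the existence of the partner $j$ not by a second expected-count factor but by a union bound over the branch times $\tau$ along the spine, using for each $\tau$ the \emph{probability}
\[
\P\big(\text{the subtree born at }\tau\text{ has a particle }\le m_t+\eta\big)
= G_{t-\tau}\bigl(m_t+\eta - X_{\Xi_\tau}(\tau)\bigr).
\]
This replaces your squared expected count $[q_{\mathrm{des}}]^2$ by a single probability that is immediately controlled by the F-KPP tail bound $G_v(m_v+r)\le c(|r|+1)\ee^{r}$ (equation~(\ref{eq:stretching}), from Fact~\ref{f:stretching-lemma} and (\ref{lalley-sellke})). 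Once the spine is localized in the tube of $A_t(x,\eta)$, one plugs in $y\ge s^{1/3}$ (resp.\ $y-m_t\ge (t-s)^{1/3}$) and gets directly $G_{t-\tau}(\cdots)\le \P(W\le -c\,s^{1/3})$, integrable in $s$; the factor $t^{3/2}\,\P(\text{spine in tube, ends in }J_\eta)$ is $O(1)$ by (\ref{joint-min-Bt}), and the proof is done in a few lines.

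Your approach should also go through, but it forces you to do exactly the delicate balancing you flag at the end: $q_{\mathrm{anc}}\,[q_{\mathrm{des}}]^2$ carries an overall factor $\ee^{2m_t-y}=t^3\ee^{-y+2C_B}$, and the $t^3$ must be cancelled by the product of three constrained Brownian-bridge probabilities (one pre-split, two post-split), each contributing roughly a $t^{-1}$ via Lemma~\ref{l:small o of 1/t}-type estimates. This works, but is considerably more bookkeeping than the paper's route, whose point is precisely that invoking the \emph{distribution function} $G_v$ of the BBM minimum (rather than a second many-to-one) short-circuits the pair computation entirely.
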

The following proof is included for the sake of self-containedness. 
\begin{proof}

Fix $\eta>0$ and $k\to\zeta(k)$ an increasing sequence going to infinity. 
We want to control the probability  of
$$
\mathcal B_{\eta,k,t}= \left\{ \exists i,j \in J_\eta(t) : \tau_{i,j}(t) \in [\zeta(k) , t-\zeta(k)] \right\}
$$
the ``bad" event that particles have branched at an intermediate time when $t \to \infty$ and then $k \to \infty.$

By choosing $x$ large enough, we have for all $\zeta\ge 0$ and $t$ large enough
\begin{align*}
& \P(\exists i,j \in J_\eta(t) : \tau_{i,j}(t) \in [\zeta, t-\zeta] ) \\ &\le \P(A_t(x,\eta)^\complement )+\P(\exists i,j \in J_\eta(t) : \tau_{i,j}(t) \in [\zeta, t-\zeta] , A_t(x,\eta))  \\
&\le \varepsilon + \E\left[ {\bf 1}_{A_t(x,\eta)}\sum_{i \in J_\eta(t) } \mathbf{1}_{\{ \exists j \in J_\eta(t) : \tau_{i,j}(t) \in [\zeta, t-\zeta]  \} }  \right].  
\end{align*}
 Using the many-to-one principle (see (\ref{E:many to one version 1})), we have
\begin{align*}
  \E\left[ {\bf 1}_{A_t(x,\eta)}\sum_{i \in J_\eta(t) } \mathbf{1}_{\{ \exists j \in J_\eta(t) : \tau_{i,j}(t) \in [\zeta, t-\zeta]  \} }  \right]
 &= \E_{\Q}\Big[\ee^{X_{\Xi_t}(t)}{\bf 1}_{A_t(x,\eta)}{\bf 1}_{\{|X_{\Xi_t}(t) -m_t | \le \eta , \exists j \in J_\eta(t) : \tau_{\Xi,j}(t) \in [\zeta, t-\zeta]\}} \Big] 
% \\ &=\E \Big[\ee^{\sigma B_t}{\bf 1}_{A_t(x,\eta)}{\bf 1}_{\{|\sigma B_t -m_t | \le \eta , \exists j \in J_\eta(t) : \tau_{\Xi,j}(t) \in [\zeta, t-\zeta]\}} \Big]
\end{align*}
where  $\tau_{\Xi,j}(t)$ is the time at which the particle $X_j(t)$ has branched off the spine $\Xi$. In particular, using the description of the process under $\Q$, we know that $X_{\Xi_t}(t)$ is  $\sigma$ times a standard Brownian motion, and that independent branching Brownian motions are born at rate 2 (at times $(\tau_{i}^{(\Xi_t)}(t),i\ge 1)$) from the spine $\Xi$. The event $\{ \exists j \in J_\eta(t) : \tau_{\Xi,j}(t) \in [\zeta, t-\zeta]\}\}$ means that there is an instant $\tau_{i}^{(\Xi_t)}(t)$ between $\zeta$ and $t-\zeta$, such that the branching Brownian motion that separated from $\Xi$ at that time has a descendant at time $t$ in $[m_t-\eta,m_t+\eta]$. In particular, the minimum of this branching Brownian motion at time $t$ is lower than $m_t+\eta$. Thus
\begin{align*}
& \E_\Q\Big[\ee^{X_{\Xi_t}(t)}{\bf 1}_{A_t(x,\eta)}{\bf 1}_{\{|X_{\Xi_t}(t) -m_t | \le \eta , \exists j \in J_\eta(t) : \tau_{\Xi,j}(t) \in [\zeta, t-\zeta]\}} \Big]
\\ &\qquad  \le
\E_\Q\Big[\ee^{X_{\Xi_t}(t)}{\bf 1}_{A_t(x,\eta)}{\bf 1}_{\{|X_{\Xi_t}(t) -m_t | \le \eta \}} \sum_{\tau \in [\zeta, t-\zeta]}   \mathbf{1}_{\{ X_{1,t}^{\tau} \le m_t +\eta \}} \Big] 
\end{align*}
where $ X_{1,t}^{\tau}$ is the leftmost particle at time $t$ descended from the particle which branched off $\Xi$ at time $\tau$, and the sum goes over all times $\tau=\tau_{i}^{(\Xi_t)}(t) \in[\zeta,t-\zeta]$ at which a new particle is created. Recall that $G_v(x) = \P(X_1(v)\le x)$ so that by conditioning we obtain
\begin{align*}
& \E_\Q\Big[\ee^{X_{\Xi_t}(t)}{\bf 1}_{A_t(x,\eta)}{\bf 1}_{\{|X_{\Xi_t}(t) -m_t | \le \eta , \exists j \in J_\eta(t) : \tau_{\Xi,j}(t) \in [\zeta, t-\zeta]\}} \Big]
\\
&\qquad \le \E_\Q\Big[\ee^{X_{\Xi_t}(t)}{\bf 1}_{A_t(x,\eta)}{\bf 1}_{\{|X_{\Xi_t}(t) -m_t | \le \eta \}} \sum_{\tau \in [\zeta, t-\zeta]}  G_{t-\tau}(m_t+\eta-X_{\Xi_\tau}(\tau) )\Big].
\end{align*}

For all continuous function $X:[0,t] \to \r$ recall that we define $\underline X^{[a,b]}:= \min_{s\in[a,b]} X(s)$, and define
the event $A_t^{(X)}(x,\eta)$ by
\begin{align*}
A_t^{(X)}(x,\eta) &:= \{ \sigma \underline X^{[0,t/2]} \ge -x   \} \cap \{ \underline X^{[t/2, t]} \ge m_t -x   \} \cap \{ \forall s \in [x,t/2] : X(s) \ge s^{1/3}  \} \\ &\qquad \cap \{   \forall s \in [t/2, t-x] : X(s) -X(t) \in  [(t-s)^{1/3},(t-s)^{2/3}] \} .
\end{align*}
Then, $A_t(x,\eta)\cap\{|X_{\Xi_t}(t)-m_t|<\eta\}\subset A_t^{(X_{\Xi_t,t}(\cdot))}(x,\eta)\cap \{|X_{\Xi_t}(t)-m_t|<\eta\}$, hence
\begin{align*}
& \E_\Q\Big[\ee^{X_{\Xi_t}(t)}{\bf 1}_{A_t(x,\eta)}{\bf 1}_{\{|X_{\Xi_t}(t) -m_t | \le \eta \}} \sum_{\tau \in [\zeta, t-\zeta]}   \mathbf{1}_{\{ X_{1,t}^{\tau} \le m_t +\eta \}} \Big] \\
&\le \E_\Q\Big[\ee^{X_{\Xi_t}(t)}{\bf 1}_{A_t^{(X_{\Xi_t,t}(\cdot))}(x,\eta)}{\bf 1}_{\{|X_{\Xi_t}(t) -m_t | \le \eta \}} \sum_{\tau \in [\zeta, t-\zeta]}  G_{t-\tau}(m_t+\eta-X_{\Xi_\tau}(\tau) )\Big]
\\ 
&= \E\Big[\ee^{\sigma B(t)}{\bf 1}_{A_t^{(\sigma B(\cdot))}(x,\eta)}{\bf 1}_{\{|\sigma B(t) -m_t | \le \eta \}} \sum_{\tau \in [\zeta, t-\zeta]}  G_{t-\tau}(m_t+\eta-\sigma B(\tau) )\Big]
\end{align*}
where in the last expectation $B$ is a standard Brownian motion and the $\tau$ over which the sums run are the atoms of a rate 2 Poisson process independent of $B$.
Since we are on the good event $A_t^{(\sigma B)}(x,\eta)$, we know that for $x\le s \le t/2, \sigma B_s >s^{1/3} $ and $\sigma B_{t-s} > m_t + s^{1/3}.$  Therefore
\begin{align*}
&\E_\Q\Big[\ee^{\sigma B_t}{\bf 1}_{A_t^{(\sigma B)}(x,\eta)}{\bf 1}_{\{|\sigma B_t -m_t | \le \eta \}} \sum_{\tau \in [\zeta, t-\zeta]}  G_{t-\tau}(m_t+\eta-\sigma B(\tau) ) \Big] 
\\   &\le  t^{3/2}\ee^{\eta+C_B} \P\left(A_t^{(\sigma B)}(x,\eta),|\sigma B_t -m_t | \le \eta \right) \Big\{\int_\zeta^{t/2} 2  G_{t-s}( m_t -s^{1/3} +\eta) \d s \\ & \qquad \qquad \qquad \qquad \qquad \qquad \qquad \qquad+  \int_\zeta^{t/2} 2  G_s(  -s^{1/3} +\eta) \d s \Big\}\\
& \le c\, \Big\{ \int_\zeta^{t/2} 2  G_{t-s}( m_t -s^{1/3} +\eta) \d s + \int_\zeta^{t/2} 2  G_s(-s^{1/3} +\eta) \d s \Big\}
\end{align*}

\noindent where the constant $c$ only depends on $\eta$ and where we have used (\ref{joint-min-Bt}) for the last inequality.

Now, observe that $G_{t-s}( m_t -s^{1/3} +\eta) = G_{t-s}(m_{t-s}({1\over 2}) + \Delta(\eta,t,s) )$ where, as before, $m_{t-s}({1\over 2})$ is such that $G_{t-s}(m_{t-s}({1\over 2})) = {1\over 2}$, and $\Delta(\eta, t,s) := \eta -s^{1/3}  + m_t -m_{t-s}({1\over 2})$. Since $m_t ={3\over 2}\log t + C_B$ by definition, and $m_{t-s}({1\over 2}) = {3\over 2}\log (t-s) + C + o(1)$, $t-s \to \infty$, for some constant $C\in \r$, we see that there exists a sufficiently large $\zeta_0$ such that $\Delta(\eta, t,s) \le -{1\over 2} s^{1/3}$, $\forall t>\zeta \ge \zeta_0$, $\forall s\in [\zeta, {t\over 2}]$. This implies, for $t>\zeta \ge \zeta_0$ and $s\in [\zeta, {t\over 2}]$,
$$
G_{t-s}( m_t -s^{1/3} +\eta) 
=
G_{t-s}( m_{t-s}({1\over 2}) + \Delta(\eta, t,s))
\le
\P(W \le \eta -{1\over 2}s^{1/3}),
$$

\noindent the last inequality being a consequence of Fact \ref{f:stretching-lemma}.

Since $\P(W \le -y)\sim cy\ee^{-y}$, $y\to \infty$, we conclude that $\int_\zeta^{t/2} 2 G_{t-s}( m_t -s^{1/3} +\eta) \d s \to 0$ as $\zeta \to \infty.$ A similar argument also shows that $\int_\zeta^{t/2} 2  G_{s}( -s^{1/3} +\eta) \d s \to 0$ as $\zeta \to \infty$.

\medskip

The conclusion here is that by choosing $\zeta$ large enough (depending only on $\eta$),  we have $\P(\exists i,j \in J_\eta(t) : \tau_{i,j}(t) \in [\zeta, t-\zeta] ) <\varepsilon$ uniformly in $t.$ %\qed
\end{proof}

%toto
%\medskip

Recall that $\forall u \in \sH_k, X_1^u(t)$ is the position at time $t$ of the leftmost descendent of $u$ (or 0 if $u \not \in \sH_{k,t}$), and let $X_{1,t}^u(s), s\le t$ be the position at time $s$ of the ancestor of this leftmost descendent (or 0 if $u \not \in \sH_{k,t}$).
For each $t,\zeta$ and $u \in \sH_k$ define
$$
\sQ^{(u)}_{t,\zeta} = \delta_0+ \sum_{i : \tau_i^{u}>t-\zeta} \sN_i^{u}
$$
where the $\tau_i^u$ are the branching times along the path $s  \mapsto X_{1,t}^u(s) $ enumerated backward from $t$ and the $\sN_i^{u}$ are the point measures of particles whose ancestor was born at $\tau_i^u$ (this measure has no mass if $u \not \in \sH_{k,t}$). Thus, $\sQ^{(u)}_{t,\zeta}$ is the point measure of particles which have branched off the path $s  \mapsto X_{1,t}^u(s) $ at a time which is posterior to $t-\zeta$, including the particle at $X_1^u(t)$.

In the same manner we define $\sQ_\zeta$ as the point measure obtained from $\sQ$ (in Theorem \ref{T: structure de Q}) by only keeping the particles that have branched off $s \mapsto Y(s)$ before $\zeta.$
More precisely,  conditionally on the path $Y,$ we let $\pi$ be a Poisson point process on $[0,\infty)$ with intensity $2  \big(1 - G_t(-Y(t))\big) \d t  = 2  \big(1 - \P_{Y(t)} (X_1(t) <0) )\big) \d t$. For each point $t\in \pi$ such that $t<\zeta,$ start an independent
branching Brownian motion
$(\sN^*_{Y(t)}(u), u\ge 0)$ at position $Y(t)$ conditioned to have $\min \sN^*_{Y(t)}(t) >0$. Then define $\sQ_\zeta:= \delta_0+ \sum_{t \in \pi, t<\zeta} \sN^*_{Y(t)}(t).$

\begin{lemma}\label{L: joint P and Q}
For each fixed $k$ and $\zeta$, the following limit holds in distribution
$$
 \lim_{t \to \infty} ( \mathscr{P}^*_{k,t}, (\sQ^{(u)}_{t,\zeta} )_{u \in \sH_k} ) = ( \sP^*_{k,\infty} , (\sQ^{(u)}_{\zeta})_{u \in \sH_k} )
$$
where $(\sQ^{(u)}_{\zeta})_{u \in \sH_k} $ is a collection of
independent copies
of $\sQ_{\zeta}$, independent of $\sP^*_{k,\infty}$.
\end{lemma}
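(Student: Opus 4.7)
The plan is to condition on $\sF_{\sH_k}$ and use the branching Markov property at the stopping line $\sH_k$ to reduce the joint statement to a subtree-by-subtree application of Theorem~\ref{T: structure de Q}.

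To begin, since $\sH_k$ is a dissecting stopping line and is almost surely finite, the strong branching Markov property will give that, conditionally on $\sF_{\sH_k}$, the subtrees rooted at the particles $u\in\sH_k$ are mutually independent, and the one rooted at $u$ is distributed as an independent branching Brownian motion started at height $k$ at the $\sF_{\sH_k}$-measurable time $\xi_{k,u}$. Consequently, on the event $\{u\in\sH_{k,t}\}$ (which increases to $\{u\in\sH_k\}$ as $t\to\infty$, since $\xi_{k,u}<\infty$ a.s.), the pair $(X_1^u(t)-k,\,\sQ^{(u)}_{t,\zeta})$ has the same conditional law as $(X_1(t-\xi_{k,u}),\,\sQ(t-\xi_{k,u},\zeta))$ for an independent copy of the process started at $0$.

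Next, for each fixed $u\in\sH_k$ I will apply Theorem~\ref{T: structure de Q}. A close reading of its proof in Section~\ref{S: preuve de 2.3} reveals that the inner limit $t\to\infty$ for \emph{fixed} $\zeta$ already produces the joint convergence
$$
\bigl(X_1(s)-m_s,\,\sQ(s,\zeta)\bigr)\;\xrightarrow[s\to\infty]{\text{law}}\;(W,\sQ_\zeta),
$$
with $W$ and $\sQ_\zeta$ independent; the outer limit $\zeta\to\infty$ is only needed to identify $\sQ_\zeta$ with $\sQ$. Combined with the a.s.\ vanishing $m_{t-\xi_{k,u}}-m_t\to 0$, this yields
$$
\bigl(X_1^u(t)-m_t,\,\sQ^{(u)}_{t,\zeta}\bigr)\;\xrightarrow[t\to\infty]{\text{law}}\;\bigl(k+W(u),\,\sQ^{(u)}_\zeta\bigr),
$$
where $W(u)$ and $\sQ^{(u)}_\zeta$ are independent and independent of $\sF_{\sH_k}$.

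Then I combine these subtree-level convergences using the conditional independence from the first step: for the (a.s.\ finite) family $u\in\sH_k$, joint convergence in law of $\bigl(X_1^u(t)-m_t,\,\sQ^{(u)}_{t,\zeta}\bigr)_{u\in\sH_k}$ toward $\bigl(k+W(u),\,\sQ^{(u)}_\zeta\bigr)_{u\in\sH_k}$ follows, with all the $W(u)$'s and $\sQ^{(u)}_\zeta$'s mutually independent and independent of $\sF_{\sH_k}$. Since $\log(CZ_k)$ is $\sF_{\sH_k}$-measurable, shifting each atom by $\log(CZ_k)$ turns the first coordinate into $\mathscr{P}^*_{k,t}$ and produces the claimed limit $(\sP^*_{k,\infty},(\sQ^{(u)}_\zeta)_{u\in\sH_k})$, with $\sP^*_{k,\infty}$ independent of $(\sQ^{(u)}_\zeta)_{u\in\sH_k}$ because $\sP^*_{k,\infty}$ depends on the subtrees only through the $W(u)$'s.

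The main obstacle will be verifying that the subtree-level asymptotic independence of $W$ from $\sQ_\zeta$ is present already at fixed $\zeta$ rather than only in the further limit $\zeta\to\infty$. Inspecting Section~\ref{S: preuve de 2.3}, the integral over the displacement $z=y-m_t$ cleanly factors out of the limiting expression: the integrand is $\varphi_x(z)\ee^z\,\d z$ multiplied by a term depending on $F_1$ and the decoration functional but not on $z$, which is exactly what independence requires, and this factorization is present uniformly in $\zeta$.
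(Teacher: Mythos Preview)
Your proposal is correct and follows essentially the same route as the paper: condition on $\sF_{\sH_k}$, use the strong branching property to get conditional independence of the subtrees, apply Theorem~\ref{T: structure de Q} subtree by subtree (together with $m_{t-\xi_{k,u}}-m_t\to 0$), and then shift by the $\sF_{\sH_k}$-measurable quantity $\log(CZ_k)$. You are in fact more careful than the paper on one point: the paper invokes Theorem~\ref{T: structure de Q} directly, but that theorem is stated as a double limit $\lim_{\zeta\to\infty}\lim_{t\to\infty}$, whereas what is needed here is the inner limit alone at fixed $\zeta$; your observation that the factorization in Section~\ref{S: preuve de 2.3} already occurs before letting $\zeta\to\infty$ is exactly the right justification.
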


\begin{proof}
Conditionally on $\sH_k$,
the random variables
$(X_{1,t}^u(\cdot), \, \sQ^{(u)}_{t,\zeta})_{u\in\sH_k}$ are independent by the branching
property. By Theorem \ref{T: structure de Q}, for every $u\in\sH_k$,
the pair
$(X_1^u(t)-  m_t, \, \sQ^{(u)}_{t,\zeta})$ converges
in law to $(k+  W(u), \, \sQ^{(u)}_{\zeta})$ where
$\sQ^{(u)}_{\zeta}$ is a copy of $\sQ_{\zeta}$ independent of $W(u)$.

To conclude, observe that  $\sum_{u\in \sH_k}\delta_{k +   W(u)} = \sP^*_{k,\infty} -\log (CZ_k)$  by Proposition
\ref{P:convergence jointe vers P et Z}. Since for each $u \in \sH_k$ the point measure $\sQ_\zeta^{(u)}$ is independent of $W(u)$ and of all $W(v)$ for $v \in \sH_k$ and $v\neq u$, it follows that  $\sQ_\zeta^{(u)}$ is independent of  $\sP^*_{k,\infty}$. We conclude that
$$
\lim_{t \to \infty} ( \mathscr{P}^*_{k,t}, (\sQ^{(u)}_{t,\zeta} )_{u
\in \sH_k} ) = ( \sP^*_{k,\infty} , (\sQ^{(u)}_{\zeta})_{u \in \sH_k} )
$$
in distribution where the two components of the limit are independent.
\end{proof}

\medskip

Armed with these tools let us proceed to give the
\begin{proof}[Proof of Theorem \ref{t:main}]
 Let $\bar \sN^{(k)}(t)$ be the extremal point measure seen from the position $m_t-\log (CZ_k)$
$$
\bar \sN^{(k)}(t):= \sN(t)-m_t +\log (CZ_k).
$$

\noindent Let $\zeta : [0,\infty) \to [0,\infty)$ be any function increasing to infinity. Observe that on
$\mathcal B_{\eta,k,t}^\complement$ (an event of probability tending to one when $t\to  \infty$ and then $k \to \infty$ by
Proposition
\ref{P: no branching at intermediate times})
we have
$$
\bar \sN^{(k)}(t) |_{[-\eta , \eta]} = \sum_{ u \in \sH_k} \left( \sQ^{(u)}_{t,\zeta(k)}+  X_{1,t}^u-m_t+\log (CZ_k)\right)|_{[-\eta, \eta) ]}.
$$
Now by Lemma \ref{L: joint P and Q} we know that in distribution $$
\lim_{t \to \infty}  \sum_{ u \in \sH_k} \Big( \sQ^{(u)}_{t,\zeta(k)}+ X_{1,t}^u-m_t + \log (CZ_k) \Big)  = \sum_{x \in \sP^*_{k,\infty} } ( x+ \sQ^{(x)}_{\zeta(k)})
$$
where the $\sQ^{(x)}_{\zeta(k)}$ are
independent copies
of $\sQ_{\zeta(k)}$, and independent of $H_k$. Moreover, we know that $\lim_{t\to \infty} Z(t)=Z$ almost surely.

By the second limit in Proposition \ref{P:convergence jointe vers P et Z}, we have that $( \sum_{x \in \sP^*_{k,\infty} } ( x+ \sQ^{(x)}_{\zeta(k)} ),Z_k)$  converges as $k\to\infty$ to $(\sL,Z)$ in distribution, $\sL$ being independent of $Z$. In particular, $(\sL,Z)$ is also  the limit in distribution of $( \sum_{x \in \sP^*_{k,\infty} } ( x+ \sQ^{(x)}_{\zeta(k)} ),Z)$. We conclude that in distribution
$$
\lim_{k\to\infty} \lim_{t \to \infty} (\bar \sN^{(k)}(t) |_{[-\eta, \eta) ]},Z(t))=  (\sL  |_{[-\eta, \eta) ]},Z).
$$
Hence, $\lim_{k\to\infty}\lim_{t\to\infty}(\bar \sN^{(k)}(t),Z(t))=(\sL,Z)$ in distribution. Since $\bar \sN(t)$ is obtained  from $\bar \sN^{(k)}(t)$ by the shift $\log (CZ) - \log (CZ_k)$, which goes to $0$ by (\ref{neveu}), we have in distribution
$
\lim_{t\to\infty}  (\bar \sN (t),Z(t)) = (\sL,Z) 
$
which yields the content of Theorem \ref{t:main}.
\end{proof}

\medskip

\noindent{\bf Acknowledgements:} We wish to warmly thank two anonymous referees for their  careful reading and fruitful suggestions. We also wish to express our gratitude to Louis-Pierre Arguin for a useful conversation and Henri Berestycki for the argument behind Remark \ref{rmk 6.3}.

%%%%%%%%%%%%%%%%%%%%%%%%%%%%%%%%%%%%%%%%%%%%%%%%%%%
%%%%%%%%%%%%%                                                               %%%%%%%%%%%%%%%%%
%%%%%%%%%%%%%           section  big picture                      %%%%%%%%%%%%%%%%%
%%%%%%%%%%%%%                                                               %%%%%%%%%%%%%%%%%
%%%%%%%%%%%%%%%%%%%%%%%%%%%%%%%%%%%%%%%%%%%%%%%%%%%

%\section{The point process $\mathscr{P}$: proof of Proposition \ref{P:convergence jointe vers P et Z}}
%\label{S:minima}

%%%%%%%%%%%%%%%%%%%%%%%%%%%%%%%%%%%%%%%%%%%%%%%%%%%
%%%%%%%%%%%%%                                                               %%%%%%%%%%%%%%%%%
%%%%%%%%%%%%%           no intermediate branching           %%%%%%%%%%%%%%%%%
%%%%%%%%%%%%%                                                               %%%%%%%%%%%%%%%%%
%%%%%%%%%%%%%%%%%%%%%%%%%%%%%%%%%%%%%%%%%%%%%%%%%%%

%\section{No branching at intermediate times: proof of Proposition \ref{P: no branching at intermediate times}} \label{S:no branching at intermediate time}

\end{document}